\begin{document}

\def\ZZ{{\mathbb Z}}
\def\RR{{\mathbb R}}
\def\CC{{\mathbb C}}
\def\EE{{\mathbb E}}
\def\PP{{\mathbb P}}
\def\Var{{\rm Var\ }}
\def\ceil#1{\lceil#1\rceil}
\def\floor#1{\lfloor#1\rfloor}
\newtheorem{theorem}{Theorem}
\newtheorem{corollary}[theorem]{Corollary}
\newtheorem{proposition}[theorem]{Proposition}
\newtheorem{lemma}[theorem]{Lemma}

\title{The pebbling threshold spectrum and paths}
\author{David Moews\\
\small Center for Communications Research\\[-0.8ex] 
\small 4320 Westerra Court\\[-0.8ex] 
\small San Diego, CA 92121\\[-0.8ex]
\small USA\\[-0.8ex]
\small\tt dmoews@ccrwest.org}

\maketitle

{\bf Abstract.}  Given a distribution of
pebbles on the vertices of a graph, say that we can {\em pebble}
a vertex if a pebble is left on it after some sequence of moves, each of which
takes two pebbles from some vertex and places one on an adjacent vertex.
A distribution is {\em solvable} if all vertices are pebblable;
the {\em pebbling threshold} of a sequence of graphs
is, roughly speaking, the total number
of pebbles for which random distributions with that number of
pebbles on a graph in the sequence
change from being almost never 
solvable to being almost always solvable.  We show that any sequence of connected
graphs with strictly increasing orders always has some pebbling threshold which is 
$\Omega(\sqrt{n})$ and $O(2^{\sqrt{2 \log_2 n}} n/\sqrt{\log_2 n})$, 
and that it is possible to construct such a sequence of connected graphs
which has any desired pebbling threshold between these bounds.
(Here, $n$ is the order of a graph in the sequence.)
It follows that the sequence of paths,
which, improving earlier estimates, we show has pebbling threshold 
$\Theta(2^{\sqrt{\log_2 n}} n/\sqrt{\log_2 n})$,
does not have the greatest possible pebbling threshold.

\section*{Introduction}

In the mathematical game of {\em pebbling}, one starts with a {\em distribution}
on a graph assigning a nonnegative integral number of pebbles to each
vertex of the graph.  A {\em pebbling move} consists of taking two pebbles
away from a vertex with at least two pebbles and adding one pebble to any 
adjacent vertex.  A vertex is {\em pebblable} for a given distribution
if there is some sequence of pebbling moves starting at the distribution
and finishing with at least one pebble on
that vertex, and a distribution is {\em solvable} if each vertex is
pebblable for that distribution.  In \cite{czy2002}, Czygrinow et al.~introduce 
the {\em pebbling threshold} for a sequence of graphs, which, 
roughly speaking, is the number of pebbles at which a random distribution
with that number of pebbles on a graph in the sequence
changes from being almost always unsolvable
to being almost always solvable.  

In \cite[RP15]{hurl2014}, Hurlbert asks
for the pebbling threshold of the sequence of paths.
In this paper, we 
determine that it is $\Theta(2^{\sqrt{\log_2 n}} n/\sqrt{\log_2 n})$, 
where $n$ is the number of vertices of a path in the sequence.
This makes more precise the estimates of \cite{bek2003}, \cite{czy2002}, \cite{czy2008},
and \cite{wier2004}.  We also prove some subsidiary results that may be of interest.
To do this, we first (\S 1) define uniform and
geometric probability distributions over multisets and the corresponding
thresholds of sequences of families of multisets.  
In \S 2, we improve some estimates used in \cite{bek2003},
and in \S 3, we use this to relate the
uniform and geometric thresholds.  In \S 4, we begin to compute the
pebbling threshold of the sequence of paths, relating it to a certain hypoexponential
distribution.  In \S 5, we estimate asymptotically some probabilities of this
distribution and finally complete the computation of the pebbling threshold
of the sequence of paths in \S 6.

The pebbling threshold of the sequence of paths is not the largest possible pebbling threshold.
The reason is that most vertices in the path can be moved onto from 
both directions; the ends are harder to reach since they can only be reached from one direction, 
but there are only two ends.  A graph which contains a bouquet of paths joined at a point will then 
be harder to pebble since it has more ends (for an appropriate choice of path lengths and number of paths.)  
In \S 7 and \S 8, we analyze a construction of this type; in \S 9, we 
show that any sequence of connected graphs with strictly increasing orders
has some pebbling threshold which is
$\Omega(\sqrt{n})$ and $O(2^{\sqrt{2 \log_2 n}} n/\sqrt{\log_2 n})$,
and also conversely show that any positive function which is
$\Omega(\sqrt{n})$ and $O(2^{\sqrt{2 \log_2 n}} n/\sqrt{\log_2 n})$
is the pebbling threshold of a sequence of connected graphs with
orders 1, 2, 3, {\dots}.
Here, $n$ is the order of a graph in the sequence.
This resolves the problem \cite[RP17]{hurl2014}.

\section{Definitions and notation}

We use 
$\ZZ$, 
$\ZZ_{>0}$, 
$\omega$,
$\RR$,
$\RR_{\ge 0}$, 
$\RR_{>0}$, 
$\RR_{<0}$, 
$\CC$,
$\PP$, $\EE$, and $\Var$ to denote 
the integers, 
the positive integers, 
the nonnegative integers, 
the reals,
the nonnegative reals,
the positive reals,
the negative reals,
the complex numbers,
probability, expectation, and variance.  $\iota$ is the imaginary unit.
For $x\in\RR$,
$\floor{x}$ will be the largest integer
no larger than $x$, $\ceil{x}$ the smallest integer no smaller than $x$,
and $\{x\}$ the fractional part of $x$, $\{x\}:=x-\floor{x}$;
for $x\in\RR_{>0}$, $\log x$ will be the natural logarithm of $x$,
and $\log_2 x$ will be the logarithm of $x$ to the base 2, $\log_2 x:=
\log x/(\log 2)$.
The cardinality of a set $S$ is written $\#S$.
For nonnegative integers $k\le n$, ${n\choose k}$ denotes the binomial coefficient $n!/(k!\,(n-k)!)$.
$A^B$ will be the set of functions from $B$ 
to $A$.  If $f$, $g\in A^B$ and $A$ is ordered, we define
$f\le g$ iff $f(x)\le g(x)$ for all $x\in B$; similarly, if $A$ has an addition
operation, we define $f+g\in A^B$ by $(f+g)(x)=f(x)+g(x)$ for all $x\in B$. 
We call the elements of $\omega^B$ {\em multisets}
and write 0 for the empty multiset,
i.e., the element of $\omega^B$ whose value is always 0.

For any $b\in B$, we take $e_b\in \omega^B$
to have $e_b(c)=1$ if $b=c$, $e_b(c)=0$ if $b\ne c$.  For $S$ a subset of some 
$\omega^B$, we let $\partial S$ be 
$\{f\in \omega^B\mid f+e_b\in S \hbox{\ for some\ } b\in B\}$,
and if $B$ is finite and $T\in \omega$, 
we take $[S]_T$ to be $\{f\in S\mid \sum_{x\in B} f(x)=T\}$.

The geometric distribution on $\omega$ with parameter $0<p\le 1$ is the
probability measure $\chi$ with $\chi(\{n\})=p (1-p)^n$, where we take
$0^0=1$.  For $B$ finite and nonempty, if $T\in\omega$, we let $\mu_T$ be the probability measure on
$\omega^B$ that is uniform on $[\omega^B]_T$ and zero elsewhere, and if
$T\in\RR_{\ge 0}$, we let $\nu_T$ be
the probability measure on $\omega^B$ which is the product of $\# B$ copies of the
geometric distribution with parameter $(1+(T/\# B))^{-1}$.

If $(M_i)_{i\in \omega}$ is a sequence 
such that $\emptyset\ne M_i\subseteq \omega^{B_i}$ for each $i$, where each 
$B_i$ is a nonempty finite set, and each $M_i$ is an {\em upper set} ($x\in M_i$ and $x\le y$ 
implies that $y\in M_i$), then we define the {\em uniform threshold} of $(M_i)_{i\in\omega}$ to 
be the sequence $(T_i)_{i\in\omega}$, where each $T_i\in\omega$ is minimal such that
$\mu_{T_i}(M_i)\ge \frac{1}{2}$.  If in addition $0\notin M_i$ for each $i$,
we define the {\em geometric threshold} of $(M_i)_{i\in\omega}$ to be
the sequence $(T_i)_{i\in\omega}$, where each $T_i\in\RR_{>0}$ is the unique
$T_i$ satisfying $\nu_{T_i}(M_i)=\frac{1}{2}$.  (We will see in $\S 2$ and $\S 3$ that these
definitions are sensible.)

All graphs considered in this paper will be undirected and finite; also, we will
take the graph with no vertices to be disconnected.
For a given graph, $d(x,y)$ will mean the distance between vertices $x$ and $y$ in the graph, and,
in a given distribution on the graph, ${\cal Z}(x)$ will mean the number of pebbles on vertex $x$.

\section{Improved thresholds for multisets}

The main result of this section is the following, which can be used to improve estimates
like those used in the proof of \cite[Theorem 1.5]{bek2003}.

\begin{theorem} \label{thm1}
If $B$ is nonempty and finite, $T\in\omega$, $x\in \RR_{\ge 0}$, 
$S\subseteq \omega^B$, and $\mu_{T+1}(S)\ge x/(T+1+x)$, then $\mu_T(\partial S)\ge x/(T+x)$.
(Here we take $0/0=0$ in the case where $T=x=0$.)
\end{theorem}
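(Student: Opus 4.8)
The idea is to peel the statement down to a sharp shadow‑isoperimetric inequality in the lattice of multisets on $B$, and then dispatch that.

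First, the reductions. If $x=0$ the conclusion is just $\mu_T(\partial S)\ge 0$, and if $T=0$ (and $x>0$) then $[\omega^B]_0=\{0\}$, $\mu_0$ is the point mass at $0$, $[\omega^B]_1=\{e_b:b\in B\}$, and $0\in\partial S$ exactly when $S$ meets $[\omega^B]_1$; the hypothesis $\mu_1(S)\ge x/(1+x)>0$ forces this, so $\mu_0(\partial S)=1\ge x/x$. Hence I may assume $T\ge 1$ and $x>0$. Since only $S\cap[\omega^B]_{T+1}$ affects $\mu_{T+1}(S)$ and only $\partial S\cap[\omega^B]_T$ affects $\mu_T(\partial S)$, I replace $S$ by $S\cap[\omega^B]_{T+1}$; then $\partial S\cap[\omega^B]_T$ is exactly the lower shadow $\Delta S$ of $S$ in the componentwise order on $\omega^B$. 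Write $n=\#B$, $P_k=\#[\omega^B]_k={k+n-1\choose n-1}$, $s=\#S$, $m=\#\Delta S$. If $\Delta S=[\omega^B]_T$ (in particular if $S=[\omega^B]_{T+1}$) there is nothing to prove, so I may assume $0<s<P_{T+1}$ and $0<m<P_T$. As $x\mapsto x/(T+1+x)$ and $x\mapsto x/(T+x)$ are increasing, it suffices to treat the case $s/P_{T+1}=x/(T+1+x)$, and substituting this $x$ reduces the theorem to
\[
(T+1)\,s\,(P_T-m)\ \le\ T\,m\,(P_{T+1}-s)\qquad\text{for every such }S.
\]

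Next I would prove this inequality. Viewing a size‑$k$ multiset on $B$ as a degree‑$k$ monomial in $n$ variables, $\Delta S$ is precisely the set of degree‑$T$ monomials obtained from those of $S$ by dividing by one variable, so by Macaulay's theorem (equivalently, the Clements--Lindstr\"om theorem for multisets) $m$ is minimized, for fixed $s$, when $S$ is a colexicographic initial segment of $[\omega^B]_{T+1}$. It then suffices to verify the displayed inequality for such initial segments, where $s$ and $m$ are given by the cascade formulas attached to the Macaulay representation $s={a_{T+1}\choose T+1}+{a_T\choose T}+\dots$, so that the inequality becomes an explicit --- if unlovely --- statement about binomial coefficients, provable by induction on the number of cascade terms. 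Alternatively, one can bypass Macaulay's theorem and argue by induction on $n=\#B$: the case $n=1$ is trivial; for the step, fix $b\in B$, set $B'=B\setminus\{b\}$ and, identifying $\omega^B$ with $\omega^{B'}\times\omega$ via the $b$‑coordinate, put $S^{(j)}=\{h\in\omega^{B'}:(h,j)\in S\}$; one checks that $(f',\ell)\in\partial S$ iff $f'\in S^{(\ell+1)}\cup\Delta'S^{(\ell)}$ (with $\Delta'$ the shadow in $\omega^{B'}$), conditions $\mu_{T+1}$ and $\mu_T$ on the $b$‑coordinate, and applies the inductive hypothesis to each $S^{(\ell)}$ with ``$T$'' taken to be $T-\ell$; this reduces matters to a scalar inequality: if a certain weighted average of reals $q_0,\dots,q_{T+1}\in[0,1]$ is at least $x/(T+1+x)$, then the corresponding weighted average of $\max\!\big(q_{\ell+1},\ (T+1-\ell)q_\ell/(T-\ell+q_\ell)\big)$ over $0\le\ell\le T$ is at least $x/(T+x)$.

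The hard part, on either route, will be the final arithmetic verification, and the reason it is delicate is that the inequality is sharp and so leaves no room: for any $b\in B$ with $\#B=n\ge 2$, the set $S=\{g\in[\omega^B]_{T+1}:g(b)=0\}$ together with $x=n-1$ gives $\mu_{T+1}(S)=(n-1)/(T+n)=x/(T+1+x)$ and $\mu_T(\partial S)=(n-1)/(T+n-1)=x/(T+x)$, so both inequalities in the theorem become equalities. In particular the cheap normalized‑matching bound $m/P_T\ge s/P_{T+1}$ only delivers $\mu_T(\partial S)\ge x/(T+1+x)$, which falls short of what is wanted, so the full extremal input --- Macaulay's theorem, or the curvature of the maps $q\mapsto(T+1-\ell)q/(T-\ell+q)$ in the inductive argument --- must genuinely be used.
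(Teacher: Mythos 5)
Your reductions are correct, you have correctly identified Clements' multiset Kruskal--Katona theorem as the key extremal input (the same tool the paper cites), and your equivalent binomial formulation $(T+1)\,s\,(P_T-m)\le T\,m\,(P_{T+1}-s)$ matches the condition $(t-1+p)q\ge pt$ that the paper's Lemma~\ref{l1} works with. Your sharpness example $S=\{g\in[\omega^B]_{T+1}:g(b)=0\}$ with $x=\#B-1$ is also correct, and you rightly observe that this is why the normalized-matching bound $m/P_T\ge s/P_{T+1}$ falls short. So the plan is essentially the paper's plan.

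The gap is that you stop exactly where the proof starts. Once Clements gives the cascade representation $\#S=\sum_{0\le i<r}\binom{t-i-1+d_i}{t-i}$ and the shadow bound, all of the content is in verifying the resulting inequality, and that verification is the paper's Lemma~\ref{l1}: a genuine induction on $t=T+1$ which peels off the leading term $d_0$, applies the inductive hypothesis to the tail to get $p'\le q'$, takes a forward first difference in $n$ to reduce to the boundary case $n=d_0+1$, and then establishes several exact binomial identities (not mere inequalities) to close the argument. Calling this ``an explicit --- if unlovely --- statement about binomial coefficients, provable by induction on the number of cascade terms'' names the shape but supplies none of the substance, and since the inequality is tight there is no slack to absorb an imprecise estimate; the forward-difference step and the boundary-case identities have to be done. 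Your alternative route (induction on $\#B$, slicing on the $b$-coordinate and reducing to a scalar inequality about weighted averages of $\max\bigl(q_{\ell+1},\ (T+1-\ell)q_\ell/(T-\ell+q_\ell)\bigr)$) is not what the paper does and is not verified: the weights on the level-$T$ slices differ from those on the level-$(T+1)$ slices, so the claimed reduction to a one-variable-per-slice inequality is not automatic and would itself need an argument. As written, the proposal is a correct proof outline with the central lemma missing.
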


\begin{lemma}
\label{l1}
Given $x\in\RR_{\ge 0}$, $r\in\omega$ and positive integers $t$, $n$ and $d_0\ge d_1\ge \cdots \ge d_{r-1}$
with $t\ge r$ and (if $d_0$ exists) $n>d_0$, 
let
$$
p:=\left.\sum_{0\le i<r} {t-i-1+d_i \choose t-i}\right/{t-1+n\choose t},
$$
$$
q:=\left.\sum_{0\le i<r} {t-i-2+d_i \choose t-i-1}\right/{t-2+n\choose t-1}.
$$
Then $0\le p<1$ and, if $p\ge x/(t+x)$, also $q\ge x/(t-1+x)$ (where we take $0/0=0$ in the
case $t=1$ and $x=0$.)
\end{lemma}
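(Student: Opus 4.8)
\medskip

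Here is the plan. The statement splits into the bound $0\le p<1$ and the implication $p\ge x/(t+x)\Rightarrow q\ge x/(t-1+x)$, and I would treat these separately. That $p\ge 0$ is clear. For $p<1$, recall that ${d-1+k\choose k}$ is the number of multisets of size $k$ on a $d$-element set; so the numerator of $p$ counts, disjointly over $0\le i<r$, the multisets $M$ of size $t-i$ on $\{1,\dots,d_i\}$. Since $d_i\le d_0<n$, each such $M$ has support in $\{1,\dots,n-1\}$, and I would map $(i,M)$ to the size-$t$ multiset obtained by adjoining $i$ extra copies of the element $n$. This is injective --- one recovers $i$ as the multiplicity of $n$ --- and its image omits the multiset consisting of $t$ copies of $n$ (legitimate since $i\le r-1<t$, using $r\le t$), which is one of the ${t-1+n\choose t}$ multisets counted by the denominator; hence the numerator is strictly smaller and $p<1$.

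For the implication, first dispose of $t=1$: then $r\le 1$, and either $r=0$ (so $p=0$, which forces $x=0$ and the trivial conclusion $q\ge 0$) or $r=1$, in which case $q={d_0-1\choose 0}\big/{n-1\choose 0}=1$, which is $\ge x/(t-1+x)$ whether this equals $1$ (for $x>0$) or $0$ (for $x=0$). For $t\ge 2$ put $\phi(u):=tu/(t-1+u)$; then $\phi$ is increasing on $[0,1)$ since $\phi'(u)=t(t-1)/(t-1+u)^2>0$, and a short computation gives $\phi\bigl(x/(t+x)\bigr)=x/(t-1+x)$ for all $x\in\RR_{\ge 0}$. Hence it is enough to prove $q\ge\phi(p)$, equivalently
\[
q\,(t-1+p)\ \ge\ tp,
\]
for then $p\ge x/(t+x)$ gives $q\ge\phi(p)\ge\phi\bigl(x/(t+x)\bigr)=x/(t-1+x)$. (Writing $N(u):=\sum_{0\le i<r}{u-i-1+d_i\choose u-i}$ and $D(u):={u-1+n\choose u}$, so that $p=N(t)/D(t)$ and $q=N(t-1)/D(t-1)$, this is the assertion that $uN(u)\big/\bigl(D(u)-N(u)\bigr)$ is nonincreasing in $u$.)

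I would prove $q\,(t-1+p)\ge tp$ by induction on $r$; for $r=0$ it is trivial since then $p=0$. For $r\ge 1$, separate off the $i=r-1$ summand: put $s:=t-r+1\ge 1$, $\alpha:={s-1+d_{r-1}\choose s}$, $\beta:={s-2+d_{r-1}\choose s-1}$, so that $\alpha,\beta\ge 1$ and $\alpha=\frac{s-1+d_{r-1}}{s}\beta$. Let $p',q'$ be the analogues of $p,q$ formed from $d_0\ge\cdots\ge d_{r-2}$ alone, so $p=p'+\alpha/D(t)$, $q=q'+\beta/D(t-1)$; since $D(t)=\frac{t-1+n}{t}D(t-1)$, the ratio of $\beta/D(t-1)$ to $\alpha/D(t)$ is $\frac{s(t-1+n)}{t(s-1+d_{r-1})}$. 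The inductive hypothesis applies to $t,n,r-1,(d_0,\dots,d_{r-2})$ (valid since $t\ge r>r-1$ and $n>d_0$) and yields $q'(t-1+p')\ge tp'$; feeding this into the expansion of $q\,(t-1+p)-tp$ and using the above ratio collapses everything to the single residual inequality
\[
(t-1+p)\,(t-r+1)\,(t-1+n)\ \ge\ t\,(t-r+d_{r-1})\,(t-q').
\]
To close the residual inequality I would fix $t,r$ and the $d_i$ and view both sides as functions of the integer $n\ge d_0+1$: check it at $n=d_0+1$ (where, when $r=1$, it is in fact an equality, using the telescoping $p=\prod_{0\le j<t}\frac{d_0+j}{d_0+1+j}=\frac{d_0}{d_0+t}$ and $q'=0$), and then estimate the increment of the difference of the two sides as $n$ increases by $1$, using $0\le p<1$, $0\le q'\le 1$ (another instance of the injection above, applied to $N(t-1)$), and $1\le d_{r-1}\le d_0\le n-1$, to see that the difference never becomes negative. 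This last step --- the residual inequality, already in its $r=1$ case --- is the main obstacle: it is genuinely tight (equality at $n=d_0+1$ when $r=1$), so it cannot be obtained by discarding the nonnegative terms or by loose binomial estimates, and the monotone dependence on $n$ has to be tracked with some care; it is precisely the hypotheses $t\ge r$ and $d_{r-1}\le d_0\le n-1$ that make the estimate go through.
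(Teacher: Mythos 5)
Your proof of $0\le p<1$ via the explicit injection (adjoin $i$ copies of the element $n$, note the image misses the all-$n$ multiset) is correct and is equivalent in force to the paper's hockey-stick argument. The reduction to the unconditional inequality $q(t-1+p)\ge tp$, and the treatment of the base case $t=1$, are both exactly as in the paper. The induction, however, is genuinely different: you induct on $r$, peeling off the final summand $i=r-1$ and keeping $t$ and $n$ fixed, whereas the paper inducts on $t$, peeling off the leading summand $i=0$ and replacing the base-set parameter $n$ by $d_0$ in the denominators.

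The trouble is that your reduction does not close. Using only $q'(t-1+p')-tp'\ge 0$ (the inductive hypothesis) and the computed ratio $b/a=(t-r+1)(t-1+n)/\bigl(t(t-r+d_{r-1})\bigr)$, the expansion of $q(t-1+p)-tp$ reduces, as you say, to the residual inequality
\[
(t-1+p)(t-r+1)(t-1+n)\ \ge\ t(t-r+d_{r-1})(t-q'),
\]
but this inequality is simply false in general. Take $t=6$, $r=2$, $d_0=d_1=20$, $n=23$. Then $p={25\choose 6}+{24\choose 5}$ over ${28\choose 6}$, i.e.\ $p=219604/376740=341/585\approx 0.5829$, and $q'={24\choose 5}/{27\choose 5}=42504/80730\approx 0.5265$. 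The left side of the residual is $(5+p)\cdot 5\cdot 28\approx 781.6$ while the right side is $6\cdot 24\cdot(6-q')\approx 788.2$, so the residual fails by about $6.6$; yet the lemma's conclusion does hold here, since $q=({24\choose 5}+{23\choose 4})/{27\choose 5}=51359/80730\approx 0.6362$ gives $q(t-1+p)\approx 3.552\ge 3.497\approx tp$. The point is that passing to the residual discards the strictly positive slack $q'(t-1+p')-tp'$, and in this example (and a one-parameter family around it) that slack is essential: the inequality you are left to prove is not true. Consequently the closing plan — verify at $n=d_0+1$, then control the dependence on $n$ — cannot succeed, because the thing being verified is false for some $n>d_0$ and some $r\ge 2$ (the tightness you noted in the $r=1$ case does not propagate).

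The paper's induction on $t$ avoids this: after stripping $i=0$, it redefines $p',q'$ with denominators ${t-1+d_0\choose t-1}$ and ${t-2+d_0\choose t-2}$ (so that the IH is applied with $n$ replaced by $d_0$ and $t$ by $t-1$), derives $p'\le q'$ from the IH together with $0\le p'<1$, and then a forward difference in $n$ reduces matters to $n=d_0+1$, where everything becomes algebraic identities. The relative geometry of that split is what makes the reduction lossless. To rescue your induction on $r$, you would need to strengthen the inductive hypothesis to carry quantitative slack, rather than just the sign of $q'(t-1+p')-tp'$; as written, the argument has a genuine gap.
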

\begin{proof}
We first prove that $0\le p<1$.  This is clear if $r=0$; otherwise
\begin{eqnarray*}
\sum_{0\le i<r} {t-i-1+d_i\choose t-i}
&<& \sum_{0\le j\le t} {j+d_0-1\choose j}\\
&=& {t+d_0\choose t}\\
&\le& {t-1+n\choose t},
\end{eqnarray*}
so $p<1$.

We can give an equivalent condition for $p\ge x/(t+x)$ implying that $q\ge x/(t-1+x)$
by observing that it will do to prove this for the maximal $x$ for which $p\ge x/(t+x)$,
which is $x:=pt/(1-p)$.  In this case, $q\ge x/(t-1+x)$ reduces to $(t-1+p)q\ge pt$.

We now induce on $t$ to prove this.  If $t=1$, we have two cases.  If $r=0$, we must have $p=0$,
making the result trivial; if $r=1$, $q=1$, making the result again trivial.
Otherwise, let $t>1$.  If $r=0$, we again have $p=0$, making the result trivial.  
If $r>0$, set
\begin{eqnarray*}
\alpha&:=&\sum_{1\le i<r}  {t-i-1+d_i \choose t-i},\\
\beta&:=&\sum_{1\le i<r} {t-i-2+d_i \choose t-i-1},\\
p'&:=&\alpha\left/{t-1+d_0\choose t-1}\right., \qquad
q':=\beta\left/{t-2+d_0\choose t-2}\right..
\end{eqnarray*}
By the induction hypothesis, we can assume that
$(t-2+p')q'\ge p'(t-1)$; since $0\le p'<1$, 
this implies that $p'\le q'$.
We need to show that $(t-1+p)q\ge pt$, which, after clearing denominators, is equivalent to
\begin{eqnarray}
&\ &\left((t-1){t-1+n\choose t} + p' {t-1+d_0\choose t-1} + {t-1+d_0\choose t}\right)\cdot
\nonumber\\
&\ &\left(q'{t-2+d_0\choose t-2}+{t-2+d_0\choose t-1}\right)\nonumber\\
\label{eqa}
&\ge&
{t-2+n\choose t-1}\left(p' {t-1+d_0\choose t-1} + {t-1+d_0\choose t}\right)t.
\end{eqnarray}
Taking a forward first difference of (\ref{eqa}) with respect to $n$ gives
\begin{eqnarray*}
&\ &(t-1){t-1+n\choose t-1}\left(q'{t-2+d_0\choose t-2}+{t-2+d_0\choose t-1}\right)\\
&\ge&
{t-2+n\choose t-2}\left(p' {t-1+d_0\choose t-1} + {t-1+d_0\choose t}\right)t,
\end{eqnarray*}
which, using ${t-1+n\choose t-1}=\frac{t+n-1}{t-1}{t-2+n\choose t-2}$
and removing the common factor ${t-2+n\choose t-2}$,
can be rewritten as
$$
(t+n-1)\left(q'{t-2+d_0\choose t-2}+{t-2+d_0\choose t-1}\right)\ge
\left(p' {t-1+d_0\choose t-1} + {t-1+d_0\choose t}\right)t.
$$
Since we assume $n>d_0$, it's enough to prove this when $n=d_0+1$.
Using ${t-1+d_0\choose t}=\frac{t-1+d_0}{t}{t-2+d_0\choose t-1}$, this simplifies to
$$
q' (t+d_0) {t-2+d_0\choose t-2} + {t-2+d_0\choose t-1}\ge
p' {t-1+d_0\choose t-1}t,
$$
and since $p'\le q'$ and $p'\le 1$, it's enough to show that
$$
(t+d_0) {t-2+d_0\choose t-2} + {t-2+d_0\choose t-1}
={t-1+d_0\choose t-1}t,
$$
which is easy as both sides are multiples of ${t-2+d_0\choose t-2}$ by rational
functions of $t$ and $d_0$.

It remains to prove (\ref{eqa}) when $n=d_0+1$.
In this case, looking at a portion of the left-hand side of (\ref{eqa}),
\begin{eqnarray*}
&\ &q'{t-2+d_0\choose t-2}\left((t-1){t+d_0\choose t}+p'{t-1+d_0\choose t-1}\right)\\
&=& q'{t-2+d_0\choose t-2}\left((t-2+p'){t-1+d_0\choose t-1} + {t+d_0\choose t}+\right.\\
&\ & \ \ \ \left.(t-2)\left({t+d_0\choose t}-{t-1+d_0\choose t-1}\right)\right)\\
&\ge& p'(t-1) {t-2+d_0\choose t-2} {t-1+d_0\choose t-1} +\\
&\ & \ \ \ q'{t-2+d_0\choose t-2}\left( {t+d_0\choose t}+(t-2) {t-1+d_0\choose t}\right).
\end{eqnarray*}
It's therefore enough to prove (\ref{eqa}) with the left-hand side replaced by
\begin{eqnarray*}
&\ & p'(t-1) {t-2+d_0\choose t-2} {t-1+d_0\choose t-1} +\\
&\ & \ \ \ q'{t-2+d_0\choose t-2}\left( {t+d_0\choose t}+(t-2) {t-1+d_0\choose t}\right)+\\
&\ & q'{t-2+d_0\choose t-2} {t-1+d_0\choose t} +\\
&\ & \left((t-1){t+d_0\choose t} + p' {t-1+d_0\choose t-1} + {t-1+d_0\choose t}\right)
{t-2+d_0\choose t-1}.
\end{eqnarray*}
Using $p'\le q'$ and separating terms which involve and do not involve $p'$, it will do
to show that
\begin{eqnarray*}
&\ & (t-1) {t-2+d_0\choose t-2} {t-1+d_0\choose t-1} +\\
&\ & \ \ \ {t-2+d_0\choose t-2}\left( {t+d_0\choose t}+(t-2) {t-1+d_0\choose t}\right)+
\\
&\ & {t-2+d_0\choose t-2} {t-1+d_0\choose t} + {t-1+d_0\choose t-1} {t-2+d_0\choose t-1}
\\ 
&=& {t-1+d_0\choose t-1}^2 t
\end{eqnarray*}
and
\begin{eqnarray*}
 \left((t-1){t+d_0\choose t} + {t-1+d_0\choose t}\right) {t-2+d_0\choose t-1}
= {t-1+d_0\choose t-1}{t-1+d_0\choose t}t.
\end{eqnarray*}
This can be done by expressing both sides of both of these equations as rational multiples
of ${t-2+d_0\choose t-2}{t-1+d_0\choose t-1}$.
\end{proof}

We now prove Theorem \ref{thm1}.  
\begin{proof} 
Set $n:=\#B$, and intersect $S$ with $[\omega^B]_{T+1}$ 
if necessary so we can 
assume that $S\subseteq [\omega^B]_{T+1}$.  If $S=[\omega^B]_{T+1}$, then $\partial S=[\omega^B]_T$,
so $\mu_T(\partial S)=1$ and the result is obvious; if $S$ is empty, then we must have $x=0$ so the result is again
obvious.  Otherwise, set $t:=T+1$.  Since we have $\emptyset\subsetneq S\subsetneq[\omega^B]_t$, we must have
$0 < \#S < \# [\omega^B]_t={t+n-1\choose t}$.  By the theorem in 
\cite{clem1984},
there is then a representation
$$\#S = \sum_{0\le i<r} {t-i-1+d_i\choose t-i} $$
with $t\ge r>0$, $d_0\ge d_1\ge\cdots \ge d_{r-1}>0$, and
$$\#(\partial S)\ge \sum_{0\le i<r} {t-i-2+d_i\choose t-i-1}.$$
Since $\#S < {t+n-1\choose t}$ we must then have $n>d_0$, so we can apply Lemma \ref{l1}.
\end{proof}

\begin{proposition}  \label{thm2}
If $B$ is finite and nonempty and
$M\subseteq \omega^B$ is an upper set, $T\ge U\in\omega$, $x\in\RR_{\ge 0}$,
and $\mu_T(M)\le T/(T+x)$, then $\mu_U(M)\le U/(U+x)$.   (Here we take $0/0=1$ if $U=x=0$.)
\end{proposition}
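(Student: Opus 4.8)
The plan is to reduce the statement to a single one‑level descent and then iterate. Concretely, it suffices to prove the following: for every integer $T\ge 1$ with $\mu_T(M)\le T/(T+x)$ one has $\mu_{T-1}(M)\le (T-1)/(T-1+x)$; the proposition then follows by applying this $T-U$ times (when $T=U$ there is nothing to prove, and each intermediate level used, namely $T,T-1,\dots,U+1$, is $\ge 1$, and each intermediate bound is exactly the hypothesis needed for the next step). The degenerate case $x=0$ is disposed of at the outset: there $U/(U+x)=1$ (using $0/0=1$ when $U=0$), so the conclusion is automatic; hence I may assume $x>0$.

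For the descent I would pass to the complement of $M$ at level $T$ and invoke Theorem \ref{thm1}. Put $S:=[\omega^B]_T\setminus M$. Since $\mu_T$ is supported on $[\omega^B]_T$, the hypothesis rearranges to $\mu_T(S)=1-\mu_T(M)\ge 1-T/(T+x)=x/(T+x)$, which is precisely the hypothesis of Theorem \ref{thm1} with its ``$T$'' taken to be $T-1$ (so that its ``$T+1$'' is $T$). That theorem then yields $\mu_{T-1}(\partial S)\ge x/(T-1+x)$.

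It remains to transfer this back to $M$, and here the ``upper set'' hypothesis enters. The key containment is $\partial S\subseteq [\omega^B]_{T-1}\setminus M$: if $f\in\partial S$, say $f+e_b\in S$ for some $b\in B$, then comparing coordinate sums gives $\sum f=T-1$, while $f+e_b\notin M$ together with $f\le f+e_b$ and $M$ being an upper set forces $f\notin M$. Consequently $\mu_{T-1}(M)=1-\mu_{T-1}([\omega^B]_{T-1}\setminus M)\le 1-\mu_{T-1}(\partial S)\le 1-x/(T-1+x)=(T-1)/(T-1+x)$, which completes the step.

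I do not expect a genuine obstacle: the argument is essentially forced once one decides to feed the level‑$T$ complement $S$ into Theorem \ref{thm1}, with the upper‑set condition used only to get $\partial S\subseteq[\omega^B]_{T-1}\setminus M$. The only care needed is bookkeeping with the $0/0$ conventions at the extremes — the case $x=0$, and the final descent into level $0$, where Theorem \ref{thm1} outputs $\mu_0(\partial S)\ge x/x=1$, forcing $0\notin M$ and hence $\mu_0(M)=0=0/(0+x)$, consistent with the claimed bound. Non‑emptiness of $[\omega^B]_{T-1}$, needed for $\mu_{T-1}$ to be defined, is guaranteed by $B\ne\emptyset$.
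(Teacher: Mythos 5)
Your proof is correct and takes essentially the same approach as the paper: pass to the complement of $M$, invoke Theorem~\ref{thm1}, and use the upper-set hypothesis to see that the boundary stays inside the complement of $M$. The only cosmetic difference is that you package the argument as a single-level descent plus iteration, whereas the paper applies Theorem~\ref{thm1} repeatedly in one shot, tracking $\partial^{T-U}(\omega^B\setminus M)$; these are the same argument.
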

\begin{proof}  If $x=0$, the result is obvious.
Otherwise, since $\mu_T(M)\le T/(T+x)$, $\mu_T(\omega^B\setminus M)\ge x/(T+x)$,
so by repeated application of Theorem \ref{thm1}, $\mu_U(\partial^{T-U}(\omega^B\setminus M))\ge x/(U+x)$.
However, if $v\in \partial^{T-U}(\omega^B\setminus M)$, we have $v\le w$ for some
$w\in \omega^B\setminus M$, so we cannot have $v\in M$ since then, as $M$ is an upper set, $w$ would
also be in $M$.  Therefore, $\partial^{T-U}(\omega^B\setminus M)$ is disjoint from $M$
so $\mu_U(M)\le \mu_U(\omega^B\setminus \partial^{T-U}(\omega^B\setminus M))\le 1-(x/(U+x)) = U/(U+x)$.
\end{proof}

\begin{proposition}
\label{thm3}
If $B$ is finite and nonempty and $M\subseteq \omega^B$ is an upper set, $T$ and $U$ are positive integers
with $T\ge U$, $x\in\RR_{\ge 0}$, and $\mu_U(M)\ge U/(U+x)$, then $\mu_T(M)\ge T/(T+x)$.
\end{proposition}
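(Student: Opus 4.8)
The plan is to deduce Proposition~\ref{thm3} from the contrapositive of Proposition~\ref{thm2}, together with a short limiting argument to handle the boundary case of equality. Contraposing the implication in Proposition~\ref{thm2} (and renaming its parameter to $y$) gives the following statement: if $B$ is finite and nonempty, $M\subseteq\omega^B$ is an upper set, $T\ge U$ lie in $\omega$, $y\in\RR_{\ge 0}$, and $\mu_U(M)>U/(U+y)$, then $\mu_T(M)>T/(T+y)$. The only mismatch with what we want is that our hypothesis $\mu_U(M)\ge U/(U+x)$ is non-strict while the contraposed statement demands a strict inequality; I would repair this by perturbing the parameter upward.

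Concretely, fix any real $y>x$. Since $U+y>U+x$ we have $U/(U+y)<U/(U+x)\le\mu_U(M)$, so in particular $\mu_U(M)>U/(U+y)$. The hypotheses of the contraposed Proposition~\ref{thm2} are met ($B$ finite and nonempty, $M$ an upper set, $T\ge U$ positive integers hence in $\omega$, and $y\in\RR_{\ge 0}$), so we conclude $\mu_T(M)>T/(T+y)$. This holds for every real $y>x$, while $\mu_T(M)$ is a fixed number independent of $y$; since $y\mapsto T/(T+y)$ is continuous and increases to $T/(T+x)$ as $y\downarrow x$, passing to the limit in $\mu_T(M)>T/(T+y)$ yields $\mu_T(M)\ge T/(T+x)$, which is the desired conclusion.

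No separate treatment of $x=0$ is needed: then the hypothesis reads $\mu_U(M)\ge1$, so $\mu_U(M)=1$, and the same argument ($\mu_U(M)=1>U/(U+y)$ for every $y>0$, whence $\mu_T(M)>T/(T+y)$ and therefore $\mu_T(M)\ge1$) gives $\mu_T(M)=1=T/(T+0)$; equivalently, one notes that an upper set containing every multiset on $B$ of total size $U$ contains every multiset of total size $T\ge U$, since any such multiset dominates one of size $U$. I do not anticipate a genuine obstacle here --- the entire content is carried by Proposition~\ref{thm2}, and the only point calling for attention is the direction of the perturbation: one must move $x$ \emph{up} to $y>x$ (so that $U/(U+y)$ falls strictly below $\mu_U(M)$) rather than down, after which the conclusion is immediate by taking $y\downarrow x$, with no computation involved.
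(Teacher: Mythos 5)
Your proof is correct and follows exactly the paper's approach: the paper says "Replace $x$ by $x+\epsilon$, apply the contrapositive of Proposition~\ref{thm2}, and let $\epsilon\to 0$ from above," which is precisely your perturbation $y>x$ followed by $y\downarrow x$. The extra paragraph on $x=0$ is unnecessary (since $T,U$ are positive integers the $0/0$ convention never triggers, and the perturbation already covers that case) but does no harm.
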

\begin{proof}
Replace $x$ by $x+\epsilon$, apply the contrapositive of Proposition \ref{thm2}, and let $\epsilon\to 0$
from above.
\end{proof}

\begin{proposition}
\label{thm4}
If $B$ is finite and nonempty, $M\subseteq \omega^B$ is an upper set, $U\in\omega$, and
$\mu_U(M)>0$, then $\mu_U(M)\le\mu_{U+1}(M)$ and $\lim_{i\to\infty} \mu_i(M)=1$.
Also, if $0<\mu_U(M)<1$, then $\mu_U(M)<\mu_{U+1}(M)$.
\end{proposition}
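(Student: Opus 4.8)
The plan is to derive everything from Proposition \ref{thm3}, which already packages the monotonicity we need; the only real work is choosing the right parameter to feed it and handling the boundary case $U=0$.

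First I would dispose of the degenerate case $U=0$. Here $[\omega^B]_0=\{0\}$, so $\mu_0(M)>0$ forces the empty multiset $0$ to lie in $M$, and since $M$ is an upper set this means $M=\omega^B$; then $\mu_i(M)=1$ for every $i$, and all three assertions are immediate (the last one vacuously). So assume $U\ge 1$. The key point is to pick $x$ so that Proposition \ref{thm3} applies with equality in its hypothesis: writing $p:=\mu_U(M)\in(0,1]$ and setting $x:=U(1-p)/p\in\RR_{\ge 0}$ (nonnegative precisely because $p\le 1$), we have $U/(U+x)=p=\mu_U(M)$, so Proposition \ref{thm3} gives $\mu_T(M)\ge T/(T+x)$ for every integer $T\ge U$. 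It is then a one-line computation that $t\mapsto t/(t+x)$ is nondecreasing for $x\ge 0$ — cross-multiplying, $(U+1)/(U+1+x)\ge U/(U+x)$ reduces to $x\ge 0$ — so $\mu_{U+1}(M)\ge (U+1)/(U+1+x)\ge U/(U+x)=\mu_U(M)$. And since $T/(T+x)\to 1$ as $T\to\infty$ while $\mu_T(M)\le 1$ always, the same family of inequalities forces $\lim_{i\to\infty}\mu_i(M)=1$.

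For the strict inequality I would observe that $0<\mu_U(M)<1$ already forces $U\ge 1$ (as $\mu_0(M)\in\{0,1\}$), so in the above notation $x=U(1-p)/p>0$; the same cross-multiplication now gives the strict inequality $(U+1)/(U+1+x)>U/(U+x)$, and composing with $\mu_{U+1}(M)\ge (U+1)/(U+1+x)$ yields $\mu_{U+1}(M)>\mu_U(M)$.

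I do not anticipate a genuine obstacle: the content is entirely carried by Proposition \ref{thm3}, and the only things to watch are the $U=0$ boundary case, the restriction that Proposition \ref{thm3} be invoked only for positive integer arguments $T\ge U\ge 1$, and the elementary monotonicity of $t/(t+x)$. If anything, the ``hard part'' is purely bookkeeping — verifying that the chosen $x$ is nonnegative and strictly positive in the respective cases.
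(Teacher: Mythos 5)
Your proof is correct and takes essentially the same route as the paper's: dispose of $U=0$ by noting $\mu_0(M)>0$ forces $0\in M$ and hence $M=\omega^B$, then for $U\ge 1$ apply Proposition \ref{thm3} with the minimal admissible $x$ (which you explicitly compute as $x=U(1-\mu_U(M))/\mu_U(M)$, exactly the ``minimum possible value of $x$'' the paper invokes without writing it out) and read off both monotonicity and the limit from the monotone convergence of $T\mapsto T/(T+x)$ to $1$. Your version just makes explicit the choice of $x$ and the cross-multiplication that the paper's one-line proof leaves to the reader.
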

\begin{proof}
If $U=0$, then we must have $0\in M$ so $M=\omega^B$ and $\mu_i(M)=1$ for all $i$.  
Otherwise, we can apply Proposition \ref{thm3} with some value of $x$ to conclude that
$\lim_{i\to\infty} \mu_i(M)=1$.  Applying it with the minimum possible value of $x$
allows us to conclude that $\mu_U(M)\le \mu_{U+1}(M)$, or $\mu_U(M)<\mu_{U+1}(M)$ in the case where
$x>0$, i.e., $\mu_U(M)<1$.
\end{proof}

\begin{theorem}
\label{thm5}
If $B$ is finite and nonempty and $M\subseteq \omega^B$ is an upper set, then the sequence
$(\mu_0(M), \mu_1(M), \ldots)$ is either:
\begin{enumerate}
\item $(0,0,\ldots)$, if $M$ is empty.
\item $(0,0,\ldots,0,r_0,\ldots,r_{N-1},1,1,\ldots)$, for some $N\in\omega$ and $0<r_0<\cdots<r_{N-1}<1$.
\item $(0,0,\ldots,0,r_0,r_1,\ldots)$, for some strictly increasing sequence $(r_i)_{i\in\omega}$ of positive
real numbers with $\lim_{i\to\infty} r_i=1$.
\end{enumerate}
(The initial sequence of zeroes may be empty in cases 2 and 3.)
\end{theorem}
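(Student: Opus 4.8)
The plan is to derive everything from Proposition~\ref{thm4}, which already supplies both the (weak, and under a side condition strict) monotonicity of $(\mu_i(M))_{i\in\omega}$ from the first index at which it is positive onward, and the fact that this sequence tends to $1$. What remains is essentially bookkeeping: locate the least index at which $\mu_i(M)$ becomes positive, and then split according to whether the sequence ever attains the value $1$.

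First I would dispose of case~1: if $M=\emptyset$ then $[\omega^B]_i\cap M=\emptyset$ and hence $\mu_i(M)=0$ for every $i$. So assume $M\neq\emptyset$, fix some $f\in M$ with $s:=\sum_{x\in B}f(x)$, and fix any $b\in B$. For every $i\ge s$ the multiset $f+(i-s)e_b$ lies in $[\omega^B]_i$ and dominates $f$, hence lies in $M$ because $M$ is an upper set; since $B$ is finite and nonempty, $[\omega^B]_i$ is finite and nonempty, so $\mu_i(M)>0$ for all $i\ge s$. Thus $\{i\in\omega:\mu_i(M)>0\}$ is nonempty; let $N_0$ be its least element. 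By Proposition~\ref{thm4}, $\mu_U(M)>0$ forces $\mu_{U+1}(M)\ge\mu_U(M)>0$, so by induction $\mu_i(M)>0$ for all $i\ge N_0$; hence $\mu_i(M)=0$ precisely for $i<N_0$, which accounts for the (possibly empty) initial block of zeros.

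Next I would apply Proposition~\ref{thm4} with $U=N_0$ to get $\lim_{i\to\infty}\mu_i(M)=1$, and with $U=i$ for each $i\ge N_0$ to get $\mu_i(M)\le\mu_{i+1}(M)$, the inequality being strict whenever $\mu_i(M)<1$. If $\mu_i(M)<1$ for every $i$, then for $i\ge N_0$ all these inequalities are strict, so $r_j:=\mu_{N_0+j}(M)$ ($j\in\omega$) is a strictly increasing sequence of positive reals, each $<1$, with limit $1$: this is case~3. Otherwise $\mu_N(M)=1$ for some $N$; since $\mu_N(M)>0$, Proposition~\ref{thm4} gives $\mu_{N+1}(M)=1$, and by induction $\mu_i(M)=1$ for all $i$ at least as large as the least index $N^*$ with $\mu_{N^*}(M)=1$. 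Necessarily $N^*\ge N_0$, and for $N_0\le i<N^*$ we have $0<\mu_i(M)<1$, whence $\mu_{N_0}(M)<\cdots<\mu_{N^*-1}(M)<\mu_{N^*}(M)=1$. Setting $N:=N^*-N_0$ and $r_j:=\mu_{N_0+j}(M)$ for $0\le j<N$ then exhibits $(\mu_i(M))_{i\in\omega}$ in the form of case~2, with $N=0$ and/or an empty leading block of zeros permitted (for instance, both occur when $M=\omega^B$).

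The argument is short because Proposition~\ref{thm4} does the real work. The only point needing a little care is the first step---showing $\mu_i(M)>0$ for all large $i$, so that $N_0$ is well defined---together with verifying that the degenerate instances ($M$ empty, $M=\omega^B$, $N_0=0$, $N=0$) all fall under one of the three listed forms. I do not expect any serious obstacle beyond this case management.
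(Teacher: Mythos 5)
Your argument is correct and is exactly the intended unpacking of the paper's one-line proof, which simply says to apply Proposition~\ref{thm4} repeatedly. The only substance you add beyond bookkeeping is the observation that a nonempty upper set $M$ has $\mu_i(M)>0$ for all $i\ge s$ (via the witness $f+(i-s)e_b$), which is needed to give Proposition~\ref{thm4} something to bite on and is, implicitly, the starting point the paper's proof has in mind.
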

\begin{proof}
Apply Proposition \ref{thm4} repeatedly.
\end{proof}

Theorem \ref{thm5} shows that the definition of uniform threshold makes sense.

\section{Uniform and geometric thresholds}

In this section, we show that the definition of geometric threshold is sensible; also, 
if both uniform and geometric thresholds of a sequence of multiset families
are defined, the geometric threshold approaches infinity, and the number of elements in the base sets of the
multiset families approaches infinity, then the two thresholds have asymptotic ratio 1.

\begin{theorem}
\label{thm6}
If $B$ is finite and nonempty and $M\subseteq \omega^B$ is an upper set, then
the function $x\mapsto \nu_x(M)$ on $\RR_{\ge 0}$ is either:
\begin{enumerate}
\item Identically 0, if $M$ is empty.
\item Identically 1, if $M=\omega^B$.
\item Strictly increasing and continuous with $\nu_0(M)=0$ and $\lim_{x\to\infty} \nu_x(M)=1$, otherwise.
\end{enumerate}
\end{theorem}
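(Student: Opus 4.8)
The plan is to analyze the function $\phi(x) := \nu_x(M)$ directly from the product structure of $\nu_x$. Write $n := \#B$ and, for a parameter $p\in(0,1]$, let $\chi_p$ be the geometric distribution on $\omega$ with parameter $p$; then $\nu_x$ is the product of $n$ copies of $\chi_{p(x)}$ where $p(x) = (1 + x/n)^{-1}$, so $p$ is a continuous, strictly decreasing bijection from $\RR_{\ge 0}$ onto $(0,1]$ with $p(0)=1$. It therefore suffices to study $\psi(p) := \chi_p^{\otimes n}(M)$ on $(0,1]$ and show it is continuous, and (in the nondegenerate case) strictly decreasing in $p$, with $\psi(1) = 0$ and $\psi(p)\to 1$ as $p\to 0^+$; the three cases will then correspond to $M = \emptyset$, $M = \omega^B$, and everything else.

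For continuity I would observe that $\chi_p^{\otimes n}(\{f\}) = p^n (1-p)^{\sum_b f(b)}$ is a polynomial in $p$ for each fixed $f\in\omega^B$, so $\psi(p) = \sum_{f\in M} p^n(1-p)^{\|f\|_1}$ is a power series in $p$ with nonnegative coefficients summing to at most $1$; such a series converges uniformly on $[0,1]$, hence $\psi$ is continuous there, and in particular continuous on $(0,1]$ (and extends continuously to $0$). Evaluating at $p=1$ kills every term with $\|f\|_1 > 0$, so $\psi(1) = [\,0\in M\,]$, which is $0$ unless $M = \omega^B$; and $\lim_{p\to 0^+}\psi(p) = \sum_{f\in M} [\,\|f\|_1 = \text{arbitrary}\,]$ — more carefully, for $M\neq\emptyset$ the series equals $1 - \chi_p^{\otimes n}(\omega^B\setminus M)$ and the complement is a finite-or-infinite union of "orthants" each of whose $\chi_p^{\otimes n}$-mass tends to $0$ as $p\to 0^+$, so $\psi(p)\to 1$. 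The value $\psi(0)=0$ whenever $M\neq\omega^B$ then gives $\nu_0(M)=0$ in case 3.

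For strict monotonicity — the part I expect to be the main obstacle, since the naive termwise derivative of the power series has mixed signs — I would use a coupling argument exploiting that $M$ is an upper set. The standard fact is that if $p_1 < p_2$ then $\chi_{p_1}$ stochastically dominates $\chi_{p_2}$ (larger parameter means smaller geometric variable), and this extends coordinatewise to a monotone coupling of $\chi_{p_1}^{\otimes n}$ and $\chi_{p_2}^{\otimes n}$ in which the former sample is $\ge$ the latter pointwise; since $M$ is an upper set, $\chi_{p_2}^{\otimes n}(M)\le\chi_{p_1}^{\otimes n}(M)$, i.e.\ $\psi$ is nonincreasing. To upgrade to \emph{strict} decrease in the nondegenerate case, I would argue that when $\emptyset\neq M\neq\omega^B$ there is a minimal element $f\in M$ with some coordinate $f(b)\ge 1$ (if every minimal element were $0$ we'd have $M=\omega^B$; if there were no minimal elements with a positive coordinate... minimality plus $M\neq\emptyset$ forces a minimal element, and it cannot be $0$ since $M\neq\omega^B$). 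Then both $f$ and $f - e_b$ occur, with $f\in M$, $f-e_b\notin M$, and in the coupling there is positive probability of landing in $M$ under $\chi_{p_1}^{\otimes n}$ but not under $\chi_{p_2}^{\otimes n}$ in the vicinity of these two points; making this quantitative (e.g.\ by comparing $\chi_{p_1}$ and $\chi_{p_2}$ masses on the single coordinate $b$ at the two relevant values while fixing the others) yields a strict inequality $\psi(p_1) > \psi(p_2)$. Translating back through $p(x)$ reverses the direction, giving that $x\mapsto\nu_x(M)$ is strictly increasing, which completes case 3 and hence the theorem.
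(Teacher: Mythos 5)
Your plan is a genuinely different route from the paper's. The paper conditions on the total: for i.i.d.\ geometrics $(G_b)_{b\in B}$, conditioned on $\sum_b G_b = T$ the family is uniform on $[\omega^B]_T$, so $\nu_x(M) = \EE(\mu_{N_x}(M))$ with $N_x$ the sum of $\#B$ geometrics; it then obtains strict monotonicity, a quantitative modulus of continuity, and the limit all from a single coupling of the totals $N_x$, $N_y$ across parameters, leaning on Theorem~\ref{thm5} for the monotonicity of $T\mapsto\mu_T(M)$. Your argument bypasses $\mu$ and works with the product measure directly; the coordinatewise stochastic-domination coupling you sketch for strict monotonicity is correct, including the witness (a minimal $f\in M$ with $f(b)\ge 1$ for some $b$, which exists by well-foundedness of $\omega^B$ and because $0\notin M$, so that in the coupling there is positive probability of landing at $f\in M$ under $\chi_{p_1}^{\otimes n}$ while landing at $f-e_b\notin M$ under $\chi_{p_2}^{\otimes n}$).

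However, the continuity and limit steps as written have gaps. You claim that $\psi(p) = \sum_{f\in M} p^n(1-p)^{\sum_b f(b)}$ converges uniformly on $[0,1]$; this is false, and is in fact incompatible with your own (correct) claim that $\lim_{p\to 0^+}\psi(p)=1$, since uniform convergence together with $\lim_{p\to 0^+} p^n(1-p)^k = 0$ termwise would force $\lim_{p\to 0^+}\psi(p)=0$. (The series is also not a power series in $p$ with nonnegative coefficients: each $p^n(1-p)^k$ has Taylor coefficients of both signs.) The fix: grouping by $T:=\sum_b f(b)$, on $[\epsilon,1]$ each grouped term is bounded by $\#[\omega^B]_T\,(1-\epsilon)^T$, and $\sum_{T\ge 0}\#[\omega^B]_T\,(1-\epsilon)^T = \epsilon^{-n}<\infty$, so the M-test gives uniform convergence on compact subsets of $(0,1]$, hence continuity on $(0,1]$, which is all you need. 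For the limit as $p\to 0^+$, the ``union of orthants each with mass tending to $0$'' argument does not close when the union is infinite; instead, pick any $f_0\in M$ and note $\{g : g\ge f_0\}\subseteq M$ because $M$ is an upper set, giving $\psi(p)\ge (1-p)^{\sum_b f_0(b)}\to 1$. (Also, in your last sentence $\nu_0(M)$ corresponds to $\psi(1)$, not $\psi(0)$; you had already computed $\psi(1)=0$ correctly when $0\notin M$.)
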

\begin{proof}
If $M$ is empty or $\omega^B$, this is obvious.  Assume otherwise.  Since $M\ne\omega^B$, $0\notin M$
so $\nu_0(M)=0$ and, since $\nu_x$ always assigns positive probability to 0, $\nu_x(M)<1$ for all $x$.

If $(G_b)_{b\in B}$ is an i.i.d.~family of geometric random variables, then, conditioned on
$\sum_b G_b=T$, the function $b\mapsto G_b$ is uniform on $[\omega^B]_T$.  Therefore,
$\nu_x(M)=\EE(\mu_{N_x}(M))$, where the random variable $N_x$ is the sum of $\# B$ i.i.d.~geometric random 
variables with parameter $(1+(x/\#B))^{-1}$.
Given a geometric random variable $G_p$ with parameter $p$, we can realize $G_p$ as the smallest $i$
for which an i.i.d.~sequence of random variables $U_0$, $U_1$, \dots uniform on $[0,1]$ has $U_i<p$.
This lets us realize $G_p$ and $G_q$ ($p<q$) on the same probability space with
$G_p=G_q+\Xi$, where $\Xi$ is a nonnegative integral random variable which, conditioned on $G_q$,
always assigns positive probability to each nonnegative integer; in fact, $\PP(\Xi=0\mid G_q)$ is
always $p/q$.
Summing $\# B$ independent copies of this, we can realize $N_x$ and $N_y$ ($x<y$) on the same probability 
space with $N_y=N_x+\Xi'$, where $\Xi'$ is a nonnegative integral random variable which, conditioned
on $N_x$, always assigns positive probability to each nonnegative integer; also, $\PP(\Xi'=0\mid N_x)$
is always $((x+\#B)/(y+\#B))^{\# B}$.
But then
\begin{eqnarray*}
\nu_y(M)&=& \EE(\mu_{N_y}(M))\\
&=& \EE(\EE(\mu_{N_y}(M)\mid N_x))\\
&=& \EE(\EE(\mu_{N_x+\Xi'}(M)\mid N_x)).
\end{eqnarray*}
By Theorem \ref{thm5},
$\EE(\mu_{N_x+\Xi'}(M)\mid N_x)\ge \mu_{N_x}(M)$. Also, by Theorem \ref{thm5} and
the above property of $\Xi'$, 
$\EE(\mu_{N_x+\Xi'}(M)\mid N_x)>\mu_{N_x}(M)$ whenever $\mu_{N_x}(M)<1$.
Since $\nu_x(M)<1$ we must have $\PP(\mu_{N_x}(M)<1)>0$, so
$\EE(\EE(\mu_{N_x+\Xi'}(M)\mid N_x))>\EE(\mu_{N_x}(M))=\nu_x(M)$.  This proves that
$x\mapsto \nu_x(M)$ is strictly increasing.  To show that it is continuous, observe that
\begin{eqnarray*}
\EE(\mu_{N_x+\Xi'}(M)\mid N_x)&\le& \PP(\Xi'=0\mid N_x) \mu_{N_x}(M) + 1 - \PP(\Xi' = 0\mid N_x)\\
&\le& \mu_{N_x}(M) + 1-(\frac{x+\#B}{y+\#B})^{\# B},
\end{eqnarray*}
and, taking expectations,
$$
0<\nu_y(M)-\nu_x(M)\le 1-(\frac{x+\#B}{y+\#B})^{\# B},
$$
which implies that $x\mapsto \nu_x(M)$ is continuous.

Using Theorem \ref{thm5} again, to prove 
that $\lim_{x\to\infty} \nu_x(M)=1$, it will do to prove that $\lim_{x\to\infty} \PP(N_x\le j)=0$
for each fixed $j\in\omega$.  This is so because
\begin{eqnarray*}
\PP(N_x\le j) &\le& \PP(G_{(1+(x/\#B))^{-1}}\le j) \\
&\le& (j+1) (1+(x/\#B))^{-1}\\
&\to& 0 \hbox{\ \ as $x\to\infty$.}
\end{eqnarray*}
\end{proof}

Theorem \ref{thm6} shows that the definition of geometric threshold makes sense.

\begin{proposition} \label{thm7}
If $B$ is finite and nonempty, $M\subseteq \omega^B$ is an 
upper set, $T\ge U\in\omega$, and $\mu_U(M)\ge \frac12$, then $\mu_T(M)\ge T/(T + U)$, where here we take $0/0=1$ if $T=U=0$.
\end{proposition}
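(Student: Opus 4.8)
The plan is to derive this immediately from Proposition~\ref{thm3} by a single substitution for the parameter $x$, after first disposing of a degenerate case.

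First I would handle $U=0$. Here $\mu_0$ is the point mass at the empty multiset $0$, so $\mu_0(M)\ge\frac12>0$ forces $0\in M$, and since $M$ is an upper set this gives $M=\omega^B$ and hence $\mu_T(M)=1$ for every $T$. On the other hand $T/(T+U)=T/T=1$ when $T>0$, and $T/(T+U)=0/0=1$ by the stated convention when $T=U=0$, so the claimed inequality holds (with equality) in this case.

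Now suppose $U\ge 1$; then $T\ge U$ forces $T$ to be a positive integer as well, so Proposition~\ref{thm3} applies. I would invoke it with $x:=U\in\RR_{\ge 0}$: the hypothesis $\mu_U(M)\ge\frac12$ is exactly the statement $\mu_U(M)\ge U/(U+x)$, since $U/(U+U)=\frac12$. Proposition~\ref{thm3} then yields $\mu_T(M)\ge T/(T+x)=T/(T+U)$, which is the desired conclusion.

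There is essentially no obstacle here beyond recognizing the right choice $x=U$; the only point requiring care is the bookkeeping around $U=0$ (and the $0/0=1$ convention when $T=U=0$), which is why I would dispatch that case first. Everything else reduces to plugging $x=U$ into the already-established monotonicity statement.
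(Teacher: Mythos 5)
Your proof is correct and follows the same route as the paper: dispose of the degenerate case $U=0$ by noting that $\mu_0(M)\ge\frac12$ forces $0\in M$ and hence $M=\omega^B$, and then for $U\ge 1$ set $x:=U$ and invoke Proposition~\ref{thm3}. Your writeup spells out the bookkeeping around the $0/0$ convention a bit more explicitly, but it is the same argument.
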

\begin{proof}  If $U=0$, $M$ must contain 0, so $M=\omega^B$ and the result is obvious.  Otherwise, set $x:=U$ and use Proposition \ref{thm3}.
\end{proof}

\begin{proposition} \label{thm8}
If $B$ is finite and nonempty, $\emptyset\ne M\subseteq \omega^B$ is an 
upper set, $U<T\in\omega$, and $T$ is minimal such that $\mu_T(M)\ge \frac12$, then $\mu_U(M)\le U/(T + U - 1)$, 
where here we take $0/0=0$ if $U=0$ and $T=1$.
\end{proposition}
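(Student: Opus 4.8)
The plan is to deduce this directly from Proposition~\ref{thm2}, choosing the free parameter $x$ there so that the bound $T/(T+x)$ equals exactly $\frac12$ at the index $T-1$. First I would record two easy preliminary facts. Since $U<T$ and $U\in\omega$, we have $T\ge 1$, so $T-1\in\omega$ is a legitimate index; and since $T$ is minimal with $\mu_T(M)\ge\frac12$, we get $\mu_{T-1}(M)<\frac12$.

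For the main case $T\ge 2$, set $x:=T-1\ge 1$. Then $\mu_{T-1}(M)<\frac12=(T-1)/\bigl((T-1)+x\bigr)$, and since $T-1\ge U\in\omega$ and $x\in\RR_{>0}$, Proposition~\ref{thm2}, applied with $T-1$ in place of its ``$T$'' and with this $x$, gives $\mu_U(M)\le U/(U+x)=U/(T+U-1)$, which is the assertion. (When $U=0$ this just reads $0\le 0$, consistent with the claim, and there is no $0/0$ issue in Proposition~\ref{thm2} because $x\ge 1$.)

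The case $T=1$ forces $U=0$, and here the measure $\mu_0$ is the point mass at the empty multiset $0$, so $\mu_0(M)$ is $1$ if $0\in M$ and $0$ otherwise; since $\mu_0(M)<\frac12$ we must have $\mu_0(M)=0$, which equals $0/0=U/(T+U-1)$ under the stated convention, so the bound holds with equality. I do not expect a genuine obstacle; the only point that needs care is precisely this degenerate index $T=1$, where Proposition~\ref{thm2} would have to be invoked with $x=0$ and would then yield only the vacuous bound $\mu_U(M)\le 1$, so that case has to be disposed of separately using that $\mu_0$ is supported on a single multiset.
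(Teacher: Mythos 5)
Your proposal is correct and takes essentially the same route as the paper: deduce the inequality from Proposition~\ref{thm2} with $x:=T-1$, applied from index $T-1$, using that $\mu_{T-1}(M)<\frac12$ by minimality of $T$. The only cosmetic difference is that you isolate the degenerate case as $T=1$ while the paper isolates it as $U=0$; both amount to the same observation that $\mu_0$ is a point mass at the empty multiset and $0\notin M$.
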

\begin{proof}  Since $T\ne 0$, $0\notin M$, so $\mu_0(M)=0$; this proves the result if $U=0$.  Otherwise, set $x:=T-1$ and use Proposition \ref{thm2} with $T$ decreased by 1.
\end{proof}

\begin{proposition} \label{thm9}
If $B$ is finite and nonempty, $\emptyset\ne M\subseteq \omega^B$ is an upper set with
$0\notin M$, $T\in\omega$ is minimal such that $\mu_T(M)\ge \frac12$, $T'$ is the unique positive real number such that $\nu_{T'}(M)=\frac 12$, $S:=\sqrt{T'+(T'^2/\#B)}$, and $\theta$ is a real number with $\sqrt{2}<\theta< T'/S$, then
\begin{equation}
\label{9conc}
\ceil{T'-\theta S}(1-\frac{2}{\theta^2})\le T\le 
1 + \floor{T' + \theta S}(1 + \frac{2}{\theta^2-2}).
\end{equation}
\end{proposition}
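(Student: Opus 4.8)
The plan is to compare the uniform threshold $T$ with the geometric threshold $T'$ through the mixture representation of $\nu$ established inside the proof of Theorem~\ref{thm6}: writing $N$ for the sum of $\#B$ independent geometric random variables with parameter $(1+T'/\#B)^{-1}$, one has $\nu_{T'}(M)=\EE[\mu_N(M)]$, and by the definition of $T'$ this equals $\tfrac12$. A short calculation with the geometric distribution gives $\EE N=T'$ and $\Var N=T'+T'^2/\#B=S^2$, so Chebyshev's inequality yields $\PP(|N-T'|\ge\theta S)\le 1/\theta^2$; since $N$ is integer-valued this in turn bounds both $\PP(N<\ceil{T'-\theta S})$ and $\PP(N>\floor{T'+\theta S})$ by $1/\theta^2$. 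Also $0\notin M$ forces $\mu_0(M)=0$, hence $T\ge 1$, and the standing hypothesis $\sqrt2<\theta<T'/S$ gives $1/\theta^2<\tfrac12$ and $\ceil{T'-\theta S}\ge 1$.

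For the left inequality of~(\ref{9conc}), put $a:=\ceil{T'-\theta S}$. If $a\le T$ there is nothing to prove, since $a(1-2/\theta^2)<a\le T$. Otherwise every integer $N\ge a$ satisfies $N>T$, so Proposition~\ref{thm7}, applied with its ``$U$'' taken to be $T$ (using $\mu_T(M)\ge\tfrac12$), gives $\mu_N(M)\ge N/(N+T)\ge a/(a+T)$, the last step because $N\mapsto N/(N+T)$ is increasing. Bounding $\EE[\mu_N(M)]$ below by its contribution from $\{N\ge a\}$,
$$\tfrac12=\EE[\mu_N(M)]\ge\frac{a}{a+T}\,\PP(N\ge a)\ge\frac{a}{a+T}(1-1/\theta^2),$$
and solving this for $T$ gives $T\ge a(1-2/\theta^2)=\ceil{T'-\theta S}(1-2/\theta^2)$.

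For the right inequality, put $b:=\floor{T'+\theta S}$ and $p:=\PP(N>b)$, so $p\le 1/\theta^2<\tfrac12$. If $b\ge T$ the bound is immediate, so suppose $b<T$. On $\{N\le b\}$, monotonicity of $i\mapsto\mu_i(M)$ (Theorem~\ref{thm5}) gives $\mu_N(M)\le\mu_b(M)$, and Proposition~\ref{thm8}, applied with its ``$U$'' taken to be $b<T$, gives $\mu_b(M)\le b/(T+b-1)$ (the $0/0$ convention there covering $b=0$). Splitting $\EE[\mu_N(M)]$ over $\{N\le b\}$ and $\{N>b\}$ and using $\mu_N(M)\le 1$ on the latter,
$$\tfrac12=\EE[\mu_N(M)]\le\mu_b(M)(1-p)+p\le\frac{b}{T+b-1}(1-p)+p.$$
Since $p<\tfrac12$ this rearranges to $T+b-1\le 2b(1-p)/(1-2p)$, i.e., using $2(1-p)/(1-2p)-1=1/(1-2p)$, to $T\le 1+b/(1-2p)$; and since $p\le 1/\theta^2$ and $t\mapsto 1/(1-2t)$ is increasing on $[0,\tfrac12)$, $T\le 1+b/(1-2/\theta^2)=1+\floor{T'+\theta S}(1+2/(\theta^2-2))$, as required.

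The step I expect to be the real obstacle is obtaining the exact constant $1+2/(\theta^2-2)=\theta^2/(\theta^2-2)$ in the upper bound. The naive estimate $\mu_N(M)\le 1$ on all of $\{N\le b\}$ (instead of $\mu_N(M)\le\mu_b(M)$ there) throws away the factor $1-p$ and yields only the weaker $T\le 1+b(\theta^2+2)/(\theta^2-2)$; so one must retain the factor $1-p$ and exploit that the event $\{N\le b\}$, of probability $1-p$, is exactly complementary to the ``error'' event $\{N>b\}$, of probability $p\le 1/\theta^2$. Everything else is routine: the only other care needed is to keep all the tail estimates phrased for the integer-valued $N$ so the ceilings and floors match Chebyshev's inequality, and to dispose of the degenerate cases $a\le T$ and $b\ge T$ separately (both trivial).
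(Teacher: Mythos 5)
Your proof is correct and follows essentially the same approach as the paper's: both compute the mean and variance of $N$, apply Chebyshev's inequality to localize $N$ in $[\ceil{T'-\theta S},\floor{T'+\theta S}]$ with probability at least $1-1/\theta^2$, then combine the mixture identity $\nu_{T'}(M)=\EE[\mu_N(M)]$ with Propositions~\ref{thm7} and~\ref{thm8} to bound $\mu_V(M)$ below and $\mu_W(M)$ above, and rearrange. The only cosmetic difference is that you push the bound $\mu_N(M)\ge N/(N+T)$ through the monotonicity of $N\mapsto N/(N+T)$, where the paper uses monotonicity of $\mu_N(M)$ directly to reduce to $\mu_V(M)$ before invoking Proposition~\ref{thm7}; the resulting inequalities are identical.
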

\begin{proof}
If $G_p$ is a geometric random variable with parameter $p$, then $\EE G_p = p^{-1}-1$ and
$\Var G_p=p^{-2}-p^{-1}$.  Therefore, if $N_{T'}$ 
is the sum of $\# B$ i.i.d.~geometric random variables with parameter $(1+(T'/\#B))^{-1}$,
then $\EE N_{T'}=T'$ and $\Var N_{T'}=T'+(T'^2/\#B)=S^2$.  If $V:=\ceil{T'-\theta S}$ and
$W:=\floor{T'+\theta S}$, it follows from Chebyshev's inequality that $$\PP(V\le N_{T'}\le W)\ge
1-\frac{1}{\theta^2},$$ so since, by Theorem \ref{thm5}, $\mu_i(M)$ is nondecreasing with $i$, 
\begin{equation}\label{91}
\frac 12 = \nu_{T'}(M)=\EE(\mu_{N_{T'}}(M))\ge (1-\frac{1}{\theta^2}) \mu_V(M)
\end{equation}
and
\begin{equation}\label{92}
\frac 12 = \nu_{T'}(M)=\EE(\mu_{N_{T'}}(M))\le (1-\frac{1}{\theta^2}) \mu_W(M)+\frac{1}{\theta^2}.
\end{equation}
If $V\le T$, then the left-hand inequality of (\ref{9conc}) is satisfied.  
If $T<V$, then by Proposition \ref{thm7} and (\ref{91}),
$$
\frac{\frac 12}{1-\theta^{-2}}\ge \mu_V(M)\ge \frac{V}{T+V},
$$
which, after rearrangement, gives the left-hand inequality of (\ref{9conc}).
If $T\le W$, then the right-hand inequality of (\ref{9conc}) is satisfied.
If $W<T$, then by Proposition \ref{thm8} and (\ref{92}),
$$
\frac{\frac 12-\theta^{-2}}{1-\theta^{-2}}\le \mu_W(M)\le \frac{W}{T+W-1},
$$
which, after rearrangement, gives the right-hand inequality of (\ref{9conc}).
\end{proof}

\begin{theorem} \label{thm10}
If $(M_i)_{i\in \omega}$ is a sequence
such that $\emptyset\ne M_i\subseteq \omega^{B_i}$ for each $i$, each
$B_i$ is a nonempty finite set, each $M_i$ is an upper set, $0\notin M_i$
for all $i$, $(T_i)_{i\in\omega}$ and $(T'_i)_{i\in\omega}$ are the uniform
and geometric thresholds of 
$(M_i)_{i\in\omega}$, and $\# B_i$ and $T'_i$ both approach infinity
as $i\to\infty$, then $T_i/T'_i\to 1$ as $i\to\infty$.
\end{theorem}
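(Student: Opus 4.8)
The plan is to feed the two-sided estimate (\ref{9conc}) of Proposition \ref{thm9} into a squeeze argument. Apply that proposition to $M_i$, writing $T_i$ for the uniform threshold, $T'_i$ for the geometric threshold, $S_i:=\sqrt{T'_i+(T'^2_i/\#B_i)}$, and with a parameter $\theta=\theta_i$ still to be chosen in the admissible range $\sqrt 2<\theta_i<T'_i/S_i$. Dividing (\ref{9conc}) through by $T'_i$ and using $\ceil{T'_i-\theta_i S_i}\ge T'_i-\theta_i S_i$ and $\floor{T'_i+\theta_i S_i}\le T'_i+\theta_i S_i$ will give, for all large $i$,
$$\left(1-\frac{\theta_i S_i}{T'_i}\right)\left(1-\frac{2}{\theta_i^2}\right)\le\frac{T_i}{T'_i}\le\left(\frac{1}{T'_i}+1+\frac{\theta_i S_i}{T'_i}\right)\left(1+\frac{2}{\theta_i^2-2}\right),$$
and the goal reduces to choosing $\theta_i$ so that both outer expressions tend to $1$.

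The point I would stress is that no fixed $\theta$ works: I want $\theta_i\to\infty$, so that $1-2/\theta_i^2\to 1$ and $1+2/(\theta_i^2-2)\to 1$, while \emph{simultaneously} $\theta_i S_i/T'_i\to 0$, so the drift terms $\theta_i S_i$ are negligible against $T'_i$. Both are possible exactly because of the two divergence hypotheses: since $S_i/T'_i=\sqrt{1/T'_i+1/\#B_i}$, the assumptions $T'_i\to\infty$ and $\#B_i\to\infty$ together force $S_i/T'_i\to 0$, hence $T'_i/S_i\to\infty$, so the interval $(\sqrt2,\,T'_i/S_i)$ is eventually nonempty and eventually contains arbitrarily large numbers. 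A concrete admissible choice is $\theta_i:=\sqrt{T'_i/S_i}$: for all large $i$ one has $\sqrt2<\theta_i<T'_i/S_i$, while $\theta_i\to\infty$ and $\theta_i S_i/T'_i=\sqrt{S_i/T'_i}\to 0$.

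With this choice, the left-hand side of the displayed chain converges to $1\cdot1=1$ and the right-hand side to $1\cdot1=1$ (using also $1/T'_i\to 0$), so $T_i/T'_i\to 1$. The finitely many small $i$ for which the admissible interval for $\theta$ might be empty are irrelevant to the limit and can simply be ignored.

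The underlying computations are entirely routine; the only genuine content — the ``main obstacle'' if there is one — is the joint selection of $\theta_i$ meeting three constraints at once (lying in the admissible interval, going to infinity, and keeping $\theta_i S_i=o(T'_i)$), together with the observation that this is precisely where both hypotheses $\#B_i\to\infty$ and $T'_i\to\infty$ get used, through the identity $S_i/T'_i=\sqrt{1/T'_i+1/\#B_i}$.
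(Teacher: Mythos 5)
Your argument is correct and follows the same route as the paper: apply Proposition \ref{thm9}, then pick $\theta_i$ to grow like a fractional power of $T'_i/S_i$ (the paper takes $(T'_i/S_i)^{1/3}$ where you take $(T'_i/S_i)^{1/2}$), so that $\theta_i\to\infty$ and $\theta_i S_i/T'_i\to 0$ simultaneously, with the identity $S_i/T'_i=\sqrt{1/T'_i+1/\#B_i}$ showing both hypotheses are used. The extra spelled-out squeeze is just the detail the paper leaves implicit.
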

\begin{proof}
For sufficiently large $i$, use 
Proposition \ref{thm9} on each $B:=B_i$, $M:=M_i$, $T:=T_i$, and $T':=T'_i$, setting $$\theta:=
(\frac{T'}{S})^{1/3}=(\frac{T'}{\sqrt{T'+(T'^2/\# B)}})^{1/3}$$
and observing that since $T'/S\to\infty$ as $i\to\infty$,
$\theta$ and $T'/(\theta S)$ both approach
$\infty$ as $i\to\infty$.
\end{proof}

\section{The threshold of the sequence of paths, I}

We now begin to compute the pebbling threshold of the sequence of $n$-paths, where the $n$-path
has $n$ vertices, 1, \dots, $n$, and edges between vertices $i$ and $i+1$ for all $i=1$,
\dots, $n-1$.  Because of Theorem \ref{thm10} and \cite[Theorem 1.3]{bek2003},
it will do to find the geometric threshold of the 
sequence of families of solvable distributions of the $n$-paths.  Therefore, fix some positive $n$,
and suppose that, for some parameter $0<p<1$, we have i.i.d.~geometric random variables
$(Z_i)_{i\in\ZZ_{>0}}$ with parameter $p$, and that, for $i=1$, \dots, $n$,
we place $Z_i$ pebbles on each vertex $i$.
If $r:=(2 \log n)/p$, then $\PP(Z_i\ge r)=\PP(Z_i\ge \ceil{r})=(1-p)^{\ceil{r}}\le e^{-pr}=n^{-2}$,
so with probability at least $1-n^{-1}$, $Z_i<r$ for all $i=1$, \dots, $n$.
Let $L$ be a positive integer such that $n\ge 2L+1$.  Now, for each $L+1\le i\le n-L$, $i$ will be 
unpebblable iff $Z_i=0$, $\sum_{1\le j<i} Z_j 2^{-(i-j)} < 1$, and $\sum_{i<j\le n} Z_j 2^{-(j-i)}<1$,
so, given that $Z_j<r$ for all $j=1$, \dots, $n$, it is sufficient for unpebblability that
$Z_i=0$, $\sum_{1\le k\le L} Z_{i-k} 2^{-k} < 1 - (r/2^L)$, and
$\sum_{1\le k\le L} Z_{i+k} 2^{-k} < 1 - (r/2^L)$.
If we pick $i=L+1$, $L+1+(2L+1)$, \dots, $L+1+(\floor{\frac{n}{2L+1}}-1)(2L+1)$, then,
since the $Z_i$'s are i.i.d., the probability that the distribution is unsolvable will be at least
$$
-\frac{1}{n} + 1 - (1 - pq^2)^{\floor{n/(2L+1)}},
{\rm \ \ where\ \ }
q:=\PP(\frac{Z_1}{2}+\cdots+\frac{Z_L}{2^L}<1 - \frac{r}{2^L}),
$$
so the probability that it is solvable will be no more than
$$
\frac{1}{n}+(1-pq^2)^{\floor{n/(2L+1)}}\le \frac{1}{n}+\exp -pq^2\floor{\frac{n}{2L+1}}.
$$
It is easy to see that $\sum_{i>0} Z_i/2^i$ converges a.s., and then 
$$
q\ge q':= \PP(\sum_{i>0} \frac{Z_i}{2^i}<1-\frac{r}{2^L}).
$$
If we let $(W_i)_{i\in\ZZ_{>0}}$ be an i.i.d.~family of standard exponential random variables,
with $\PP(W_i\ge x)=e^{-x}$ for each $i$, then we can realize $Z_i$ by letting each $Z_i$
be $\floor{W_i/\lambda}$, where $\lambda:=-\log(1-p)$.  Since $\sum_{i>0} W_i/2^i$ also
converges a.s.,~we have
\begin{eqnarray*}
q'&=&\PP(\sum_{i>0} \frac{\floor{W_i/\lambda}}{2^i}<1-\frac{r}{2^L})\\
&\ge& q'':=\PP(\sum_{i>0} \frac{W_i/\lambda}{2^i}<1-\frac{r}{2^L})\\
&\ &\ \ \ \ \,\, =\PP(Y_{\infty}<2\lambda(1-\frac{r}{2^L})),
\end{eqnarray*}
where we have set
$$
Y_{\infty}:=W_1 +\frac{W_2}{2}+\frac{W_3}{4}+\cdots=\sum_{i\ge 0} \frac{W_{i+1}}{2^i}.
$$
The probability that the distribution is solvable is then no more than
$$
\frac{1}{n}+\exp -pq''^2 \floor{\frac{n}{2L+1}}.
$$
To estimate this, we must estimate the probability that $Y_\infty$
is below a small threshold.

\section{The asymptotics of $Y_\infty$}

Let $n\in\ZZ_{>0}$, $Y_n:=W_1+\cdots+(W_n/2^{n-1})$, 
and, for $i=0$, \dots, $n-1$, let
$$
R_{i,n}(x):=\prod_{0\le j\le n-1, j\ne i} \frac{2^j-x}{2^j-2^i}
$$
be the degree $n-1$ polynomial which is 0 at $2^j$, $j=0$, \dots, $n-1$, $j\ne i$,
and 1 at $2^i$.
Then for all $x\in\RR_{\ge 0}$
\cite[\S I.13, ex.~12]{fel1971},
$$
\PP(Y_n\le x)=\sum_{0\le i\le n-1} (1-e^{-2^i x}) R_{i,n}(0).
$$
For $0\le k<n$, $\sum_{0\le i\le n-1} 2^{ik} R_{i,n}(x)$ is a polynomial of degree at most $n-1$ 
which is $2^{ik}$ at $2^i$, $i=0$, \dots, $n-1$; it must then be $x^k$, so
$$
\sum_{0\le i\le n-1} 2^{ik} R_{i,n}(0)=0, \qquad 1\le k\le n-1.
$$
Therefore, if we write, for any $c\in\omega$,
$$
e_c(x):=e^{-x}-\sum_{0\le k\le c} \frac{(-x)^k}{k!},
$$
then
$$
\PP(Y_n\le x)=-\sum_{0\le i\le n-1} e_c(2^i x) R_{i,n}(0), \qquad 0\le c\le n-1.
$$
Let
\begin{equation}
\label{ex0}
{\cal N}:=\prod_{j\ge 1} \frac{2^j}{2^j-1}, \qquad
F_c(x):={\cal N} \sum_{i\ge 0} \frac{(-1)^{i+1} e_c(2^i x)}{(2^1-1)\cdots(2^i-1)}.
\end{equation}
We have
$$
\lim_{n\to\infty} \PP(Y_n\le x)=\lim_{n\to\infty} \sum_{0\le i\le n-1} V_{c,i,n}(x),
$$
where
\begin{eqnarray*}
V_{c,i,n}(x)&:=&-e_c(2^i x) \prod_{0\le j\le n-1, j\ne i} \frac{2^j}{2^j-2^i}\\
&=& \left(\prod_{1\le j\le n-1-i} \frac{2^j}{2^j-1}\right) (-1)^{i+1} e_c(2^i x)
\prod_{1\le k\le i} \frac{1}{2^k-1}.
\end{eqnarray*}
Then for all $n>i$,
$$
|V_{c,i,n}(x)|\le U_{c,i}(x):=\left(\prod_{j\ge 1} \frac{2^j}{2^j-1}\right)|e_c(2^i x)|\prod_{1\le k\le i} \frac{1}{2^k-1},
$$
and $\sum_{i\ge 0} U_{c,i}(x)$ converges, so by the dominated convergence theorem,
\begin{eqnarray*}
\lim_{n\to\infty} \PP(Y_n\le x)&=&\sum_{i\ge 0} \lim_{n\to\infty} V_{c,i,n}(x)\\
&=& F_c(x).
\end{eqnarray*}
(If $x\in\RR_{<0}$, we define $F_c(x):=0=\lim_{n\to\infty} \PP(Y_n\le x).$)

Since $Y_n\to Y_\infty$ a.s., $Y_n$ also converges to $Y_\infty$ in distribution, so $\PP(Y_\infty\le x)=F_c(x)$
at every continuity point of $\PP(Y_\infty\le x)$.  Since $F_c(x)$ is continuous, 
it must equal $\PP(Y_\infty\le x)$ everywhere.

Let $c$ and $x$ be positive.
If, for complex $z$, we define $S_c(z)$ by
$$
e^{cz}=\sum_{0\le k\le c} \frac{(cz)^k}{k!}+\frac{(cz)^c}{c!} S_c(z),
$$
and $z$ has negative real part, then
\cite[Theorem]{buck1963}
\begin{equation}
\label{ex1}
S_c(z)=\frac{z}{1-z}+O(\frac{1}{c}).
\end{equation}
In this section only, 
by $O(f(\cdots))$ for some function $f$, we mean any quantity for which there is an absolute constant 
$\cal L$ so that its absolute value is no larger than ${\cal L} f(\cdots)$ for all values of the parameters 
of $f$.
Substituting (\ref{ex1}) and
$$
e_c(2^i x)=\frac{(-2^i x)^{c}}{c!} S_{c}(-\frac{2^i x}{c})
$$
into (\ref{ex0}) gives 
\begin{eqnarray*}
F_c(x)&=&{\cal N}^2 \sum_{i\ge 0} (-1)^{i+1} 2^{-i(i+1)/2} e_c(2^i x) \prod_{j>i} (1-2^{-j})\\
&=& {\cal N}^2 \sum_{i\ge 0} (-1)^{i} \frac{(-2^i x)^c}{c!} 2^{-i(i+1)/2} (\frac{2^i x}{c+2^i x}+O(\frac{1}{c}))(1+O(2^{-i})).
\end{eqnarray*}
Substituting $x:=cy/2^c$, $i:=j+c$, we get
\begin{eqnarray*}
F_c(x)&=&{\cal N}^2 \frac{(cy)^c}{c!} 2^{-c(c+1)/2} \sum_{j\ge -c} (-1)^j 2^{-j(j+1)/2} (\frac{2^j y}{2^j y + 1} + O(\frac{1}{c}))(1+O(2^{-(j+c)}))\\
&=&{\cal N}^2 \frac{(cy)^c}{c!} 2^{-c(c+1)/2} (O(\frac{1}{c})+\sum_{j\ge -c} (-1)^j 2^{-j(j+1)/2} \frac{2^j y}{2^j y + 1}),
\end{eqnarray*}
and since removing the lower limit at $-c$ changes the sum by only $O(2^{-c(c+1)/2})$, we have
\begin{eqnarray}
\PP(Y_{\infty}\le x)&=&{\cal N}^2 \frac{(cy)^c}{c!} 2^{-c(c+1)/2} ({\cal P}(y) + O(\frac{1}{c})),
\ \ \hbox{where}\nonumber\\
{\cal P}(z)&:=&\sum_{j\in\ZZ} (-1)^j 2^{-j(j+1)/2} \frac{2^j z}{2^j z+1},\qquad z\in\CC.\label{aseq0}
\end{eqnarray}
For small $\epsilon>0$, the region ${\cal D}(\epsilon):=\{z\in\CC\mid |z+1|\le\epsilon |z|\}$
is a small disk containing $-1$.  On $\CC\setminus\{0\}$ with ${\cal D}(\epsilon)$ and its scalings
by $2^j$ ($j\in\ZZ$) removed, the sum defining ${\cal P}(z)$ converges uniformly, so it is analytic.
Letting $\epsilon\to 0$, it follows that ${\cal P}(z)$ is analytic on $\CC\setminus\{0, -2^j\mid j\in\ZZ\}$;
similarly, it has simple poles at $-2^{j}$ ($j\in\ZZ$.)
Where $\cal P$ is defined, we have
\begin{eqnarray*}
{\cal P}(z)&=& z \sum_{j\in\ZZ} (-1)^j 2^{-j(j+1)/2} \frac{2^j}{2^j z+1}\\
&=& z \sum_{j\in \ZZ} (-1)^j 2^{-(j-1)j/2} \frac{1}{2^j z+1} \\
&=& z \sum_{j\in \ZZ} (-1)^j 2^{-(j-1)j/2} \left( \frac{1}{2^j z+1}-1\right),\\
&\ & \qquad \hbox{since $\sum_{j\in\ZZ} (-1)^j 2^{-(j-1)j/2}=0$}\\
&=& z \sum_{j\in \ZZ} (-1)^{j-1} 2^{-(j-1)j/2} \frac{2^{j-1}\cdot 2 z} {2^{j-1} \cdot 2z + 1}\\
&=& z {\cal P}(2z).
\end{eqnarray*}
Now, set
\begin{equation} \label{ten}
{\cal Q}(z):= 2^{z(z-1)/2} {\cal P}(2^z);
\end{equation}
then ${\cal Q}(z)$ is analytic on $\CC\setminus(\ZZ+(2\pi\iota/\log 2)\ZZ+\pi\iota/\log 2)$,
has simple poles at $\ZZ+(2\pi\iota/\log 2)\ZZ+\pi\iota/\log 2$, and, where ${\cal Q}$ is defined,
\begin{eqnarray*}
{\cal Q}(z+1)&=&2^{(z+1)z/2} {\cal P}(2^{z+1})= 2^{z(z-1)/2} \cdot 2^z {\cal P}(2\cdot 2^z)={\cal Q}(z)
\ \ \hbox{and}\\
{\cal Q}(z+\frac{2\pi\iota}{\log 2})&=&2^{(z+(2\pi\iota/\log 2))(z+(2\pi\iota/\log 2)-1)/2} {\cal P}(2^z)
=-e^{2\pi \iota z} e^{-2\pi^2/\log 2} {\cal Q}(z).
\end{eqnarray*}
However, if $\theta_4(z,q)$ is the theta function \cite[\S 21.1, \S 21.11, \S 21.12]{whit1927}
$$
\theta_4(z,q):=1+2\sum_{i\ge 1} (-1)^i q^{i^2} \cos (2iz), \qquad
q=e^{\pi \iota \tau}, \ |q|<1, \ z, q, \tau\in\CC,
$$
then, for fixed $q$, $\theta_4(z,q)$ is analytic on all of $\CC$ and has zeroes 
at $\pi\ZZ+\pi\tau\ZZ+\frac 12 \pi\tau$, and
$$
\theta_4(z+\pi,q)=\theta_4(z,q), \qquad \theta_4(z+\pi\tau, q)=-\frac{e^{-2\iota z}}{q} \theta_4(z,q).
$$
If we set $\tau:=2\pi\iota/\log 2$, $q:=\exp -2\pi^2/\log 2$, then,
${\cal Q}(z) \theta_4(\pi z, q)$
will be analytic on $\CC$ and doubly periodic, so it is constant and 
$$
{\cal Q}(z)=\frac{\cal K}{\theta_4(\pi z, q)}
$$
for some ${\cal K}\in \CC$, which is real and positive since both ${\cal Q}(0)={\cal P}(1)$ and $\theta_4(0, q)$
are real and positive.  Therefore, ${\cal Q}(r)$ cannot be zero for $r\in\RR$, and since ${\cal Q}(r)$
is then real, it must always be real and positive for $r\in\RR$, so ${\cal P}(r)$ is also always 
real and positive for $r\in\RR_{>0}$.  Also, since $\theta_4(\pi z, q)$ is even, ${\cal Q}(z)$ is even.

Supposing now that $x=c'/2^{c'}$ for some real $c'\ge 1$, we may let $c:=\floor{c'}\ge 1$.  Then
\begin{equation} \label{tenb}
y=\frac{c'/c}{2^{c'-c}}=2^{-\{c'\}}(1+\frac{\{c'\}}{c})
\end{equation}
so $\frac 12<y< 2$, and, from (\ref{aseq0}) and (\ref{ten}),
\begin{eqnarray*}
\PP(Y_{\infty}\le x)&=&{\cal N}^2 \frac{(cy)^c}{c!} 2^{-c(c+1)/2} ({\cal P}(y) + O(\frac{1}{c}))\\
                   &=&{\cal N}^2 \frac{(cy)^c}{c!} 2^{-c(c+1)/2} {\cal P}(y)(1 + O(\frac{1}{c}))\\
                   &=&{\cal N}^2 \frac{(cy)^c}{c!} 2^{-c(c+1)/2} y^{(1-\log_2 y)/2}
                      {\cal Q}(\log_2 y)(1 + O(\frac{1}{c})).
\end{eqnarray*}
After some simplification, using Stirling's approximation, (\ref{tenb}), 
${\cal Q}(\log_2 y)={\cal Q}(-\{c'\})(1+O(1/c))$, 
$O(1/c)=O(1/c')$, and the periodicity and evenness of $\cal Q$, we get
\begin{theorem}
\label{thmasymp}
For real $c'\ge 1$,
$$
\PP(Y_\infty\le \frac{c'}{2^{c'}})=
\frac{{\cal N}^2}{\sqrt{2\pi c'}} e^{c'} 2^{-c'(c'+1)/2} {\cal Q}(c')(1+O(\frac{1}{c'})).
$$
\end{theorem}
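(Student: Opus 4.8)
The plan is to carry out the simplification indicated in the text, starting from the identity derived immediately above the theorem,
$$\PP(Y_{\infty}\le x)={\cal N}^2 \frac{(cy)^c}{c!}\, 2^{-c(c+1)/2}\, y^{(1-\log_2 y)/2}\,{\cal Q}(\log_2 y)\,(1 + O(1/c)),$$
where $x=c'/2^{c'}$, $c=\floor{c'}\ge 1$, and, by (\ref{tenb}), $y=2^{-\{c'\}}(1+\{c'\}/c)$, so $\frac12<y<2$. Writing $\delta:=\{c'\}\in[0,1)$ and $c=c'-\delta$, I would rewrite each factor on the right in terms of $c'$ and $\delta$, using Stirling's formula for $c!$, the expansion $\log_2 y=-\delta+O(1/c)$ coming from (\ref{tenb}), the analyticity and strict positivity of $\cal Q$ on $\RR$ established above, and the relations ${\cal Q}(z+1)={\cal Q}(z)$ and ${\cal Q}(-z)={\cal Q}(z)$.

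First I would treat the factor ${\cal Q}(\log_2 y)$. Since $c\ge 1$ and $0\le\delta<1$, (\ref{tenb}) gives $\log_2 y=-\delta+\log_2(1+\delta/c)$, which lies in the fixed compact interval $[-1,1]$; this interval is free of poles of $\cal Q$ (they all have nonzero imaginary part), and on it $\cal Q$ is analytic, hence bounded, bounded below away from $0$, and Lipschitz, so ${\cal Q}(\log_2 y)={\cal Q}(-\delta)+O(1/c)={\cal Q}(-\delta)(1+O(1/c))$. Evenness gives ${\cal Q}(-\delta)={\cal Q}(\delta)$ and periodicity gives ${\cal Q}(\delta)={\cal Q}(\delta+\floor{c'})={\cal Q}(c')$, so ${\cal Q}(\log_2 y)={\cal Q}(c')(1+O(1/c))$.

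Next I would apply Stirling's approximation $c!=\sqrt{2\pi c}\,c^c e^{-c}(1+O(1/c))$, which turns $(cy)^c/c!$ into $y^c e^c(2\pi c)^{-1/2}(1+O(1/c))$, and then expand the remaining elementary factors: from (\ref{tenb}), $y^c=2^{-\delta c}(1+\delta/c)^c=2^{-\delta c}e^{\delta}(1+O(1/c))$; since $(1-\log_2 y)/2=(1+\delta)/2+O(1/c)$ and $\log y=-\delta\log 2+O(1/c)$, one gets $y^{(1-\log_2 y)/2}=2^{-\delta(1+\delta)/2}(1+O(1/c))$; and $-c(c+1)/2=-c'(c'+1)/2+\delta c'-(\delta^2-\delta)/2$, while $e^c=e^{c'-\delta}$. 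Collecting the powers of $e$ gives $e^{\delta}\cdot e^{c'-\delta}=e^{c'}$, and the powers of $2$ beyond $2^{-c'(c'+1)/2}$ contribute the exponent $-\delta c+\delta c'-(\delta^2-\delta)/2-\delta(1+\delta)/2$, which vanishes since $-\delta c+\delta c'=\delta(c'-c)=\delta^2$ while $-(\delta^2-\delta)/2-\delta(1+\delta)/2=-\delta^2$. Finally, since $c=c'-\delta$ with $0\le\delta<1\le c'$, one has $(2\pi c)^{-1/2}=(2\pi c')^{-1/2}(1+O(1/c'))$ and $O(1/c)=O(1/c')$ uniformly in $c'\ge 1$, and assembling everything yields the asserted formula.

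There is no deep obstacle here: the substance is careful bookkeeping of the error terms and checking that every implied constant is absolute, uniformly over $c'\ge 1$ --- in particular the range $1\le c'<2$, where $c=1$, must be admitted throughout. The only step requiring more than arithmetic is ${\cal Q}(\log_2 y)={\cal Q}(-\{c'\})(1+O(1/c))$, which rests on the previously proved facts that $\cal Q$ is real-analytic and strictly positive on all of $\RR$ and on the condition $c\ge 1$, which keeps $\log_2 y$ in a compact set independent of $c'$.
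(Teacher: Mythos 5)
Your proof is correct and follows exactly the route the paper sketches: it expands $(cy)^c/c!$ by Stirling, rewrites the powers of $2$ and $e$ via $c=c'-\{c'\}$ and (\ref{tenb}), uses analyticity, positivity, periodicity and evenness of $\cal Q$ to pass from ${\cal Q}(\log_2 y)$ to ${\cal Q}(c')$, and checks $O(1/c)=O(1/c')$; the algebraic cancellation of the residual powers of $2$ is verified correctly. You are simply filling in the bookkeeping that the paper compresses into ``after some simplification,'' so this is the paper's own argument carried out in full.
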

It will be convenient later to find $\PP(Y_\infty\le c''y/2^{c''})$, where $c''$ and $y$ are
positive real and $(\log_2 y)^4\le c''$.  We need then to find $c'$ with
\begin{equation}
\label{eqquot}
\left.\frac{c'}{2^{c'}}\right/\frac{c'' y}{2^{c''}}=1,
\end{equation}
and if we set
\begin{equation}
\label{eqcpcpp}
c':= c''-\log_2 y-\frac{\log_2 y}{c''\log 2}+\frac{K}{c''^{3/2}},
\end{equation}
we can verify that, if $c''\ge 6$, the left-hand side of (\ref{eqquot})
is less than 1 if $K=4$ and bigger than 1 if $K=-7$, so
(\ref{eqquot}) must be satisfied with some $-7\le K\le 4$.
Substituting (\ref{eqcpcpp}) into Theorem \ref{thmasymp}, we get
\begin{proposition}
\label{thm12}
For real $c''\ge 6$ and $2^{-c''^{1/4}}\le y\le 2^{c''^{1/4}}$, 
$$
\PP(Y_\infty\le \frac{c'' y}{2^{c''}})=
\frac{{\cal N}^2}{\sqrt{2\pi c''}} (ey)^{c''} 2^{-c''(c''+1)/2} y^{(1-\log_2 y)/2} {\cal Q}(c''-\log_2 y)
(1+O(\frac{1}{\sqrt{c''}})).
$$
\end{proposition}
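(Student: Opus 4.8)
The plan is to reduce Proposition~\ref{thm12} to Theorem~\ref{thmasymp} via the substitution already prepared in (\ref{eqquot}) and (\ref{eqcpcpp}). First I would fix $c''\ge 6$ and $y>0$ with $(\log_2 y)^4\le c''$ (equivalently $|\log_2 y|\le c''^{1/4}$), and, by the computation immediately preceding the proposition, pick $K$ with $-7\le K\le 4$ for which, with
$$c':=c''-\log_2 y-\frac{\log_2 y}{c''\log 2}+\frac{K}{c''^{3/2}}$$
as in (\ref{eqcpcpp}), equation (\ref{eqquot}) holds, i.e.\ $c'/2^{c'}=c''y/2^{c''}$. From $|\log_2 y|\le c''^{1/4}$ we get $c'=c''+O(c''^{1/4})$, and in particular $c'\ge 1$ (a short computation, the extreme case being $c''=6$, $\log_2 y=c''^{1/4}$, $K=-7$), so Theorem~\ref{thmasymp} applies at $c'$:
$$\PP(Y_\infty\le\frac{c''y}{2^{c''}})=\PP(Y_\infty\le\frac{c'}{2^{c'}})=\frac{{\cal N}^2}{\sqrt{2\pi c'}}e^{c'}2^{-c'(c'+1)/2}{\cal Q}(c')(1+O(1/c')).$$

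Next I would rewrite each factor on the right in terms of $c''$ and $y$, allowing a relative error $O(c''^{-1/2})$. Put $c'=c''-\log_2 y+\delta$ with $\delta:=-\frac{\log_2 y}{c''\log 2}+\frac{K}{c''^{3/2}}=O(c''^{-3/4})$. Then (a) $c'=c''(1+O(c''^{-3/4}))$ gives $1/\sqrt{2\pi c'}=(1/\sqrt{2\pi c''})(1+O(c''^{-1/2}))$ and $O(1/c')=O(1/c'')$; (b) ${\cal Q}$ is holomorphic in a neighbourhood of $\RR$ (its poles being non-real), is $1$-periodic, and was shown positive on $\RR$, hence is bounded above and below by positive constants on $\RR$ and has bounded derivative there, so ${\cal Q}(c')={\cal Q}(c''-\log_2 y)(1+O(c''^{-3/4}))$; and (c) expanding $\frac12 c'(c'+1)$ about $\frac12(c''-\log_2 y)(c''-\log_2 y+1)$, the terms $(c''-\log_2 y)\delta$, $\frac{(\log_2 y)^2}{c''\log 2}$ and $\delta^2$ are all $O(c''^{-1/2})$ (this is where $(\log_2 y)^4\le c''$ is used), the only non-negligible correction being $-\frac{\log_2 y}{\log 2}$; since $2^{O(c''^{-1/2})}=1+O(c''^{-1/2})$ and $2^{(\log_2 y)/\log 2}=e^{\log_2 y}$, one checks that
$$e^{c'}2^{-c'(c'+1)/2}=(ey)^{c''}2^{-c''(c''+1)/2}y^{(1-\log_2 y)/2}(1+O(c''^{-1/2})),$$
the exponents of $2$ on the two sides in fact being literally equal. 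Substituting (a), (b), (c) into the previous display and multiplying out the error terms yields the proposition.

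The step I expect to be the main obstacle is (c): one must check that every quantity swept into the error is genuinely $O(c''^{-1/2})$ --- in particular that the cross term $(c''-\log_2 y)\delta$ and the term $\frac{(\log_2 y)^2}{c''\log 2}$ do not exceed this order, which uses $|\log_2 y|\le c''^{1/4}$ --- and that once these are removed the surviving terms reassemble with no remainder into $(ey)^{c''}2^{-c''(c''+1)/2}y^{(1-\log_2 y)/2}$ (both exponents of $2$ equal $ab-\frac{a^2+a}{2}+\frac{b-b^2}{2}$ with $a=c''$, $b=\log_2 y$). Everything else is routine estimation.
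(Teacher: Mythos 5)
Your proposal is correct and takes essentially the same approach as the paper: the paper's proof of Proposition~\ref{thm12} is the single sentence ``Substituting (\ref{eqcpcpp}) into Theorem~\ref{thmasymp}, we get,'' and you have supplied exactly the substitution and error bookkeeping that sentence leaves implicit. Your verification that $c'\ge 1$, your treatment of ${\cal Q}$ via periodicity, positivity, and bounded derivative on $\RR$, and the exponent calculation (including the identity $-(a-b)(a-b+1)/2=ab-\frac{a^2+a}{2}+\frac{b-b^2}{2}$ and the cancellation of $e^{-b}$ against $2^{b/\log 2}$) are all right.
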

Finally, we observe that
\begin{eqnarray*}
\PP(Y_\infty>x)&=&1-F_0(x)\\
&=&1-{\cal N} \sum_{i\ge 0} 
\frac{(-1)^{i+1} (e^{-2^i x}-1)}{(2^1-1)\cdots(2^i-1)}\\
&=&1+{\cal N}\sum_{i\ge 0} \frac{(-1)^{i+1}}{(2^1-1)\cdots(2^i-1)}
+{\cal N} \sum_{i\ge 0} \frac{(-1)^i e^{-2^i x}}{(2^1-1)\cdots(2^i-1)}.
\end{eqnarray*}
The last term is an alternating series whose terms decrease in magnitude, so its value 
is between 0 and the first term of the series, which is ${\cal N} e^{-x}$.  
Since $\PP(Y_\infty>x)$ must approach 0 as $x\to\infty$,
the first two terms must cancel, so we have
\begin{proposition}  \label{thm13} For $x\in\RR_{>0}$, 
$$\PP(Y_\infty>x)\le {\cal N} e^{-x}.$$
\end{proposition}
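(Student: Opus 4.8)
The plan is to read the bound directly off the series representation of $F_0$ obtained in the lines immediately preceding the statement, combined with the fact that $Y_\infty$ is a.s.\ finite. Recall $\PP(Y_\infty>x)=1-F_0(x)$, and that the displayed computation above rewrites this as
$$\PP(Y_\infty>x)=C+{\cal N}\sum_{i\ge 0}\frac{(-1)^i e^{-2^i x}}{(2^1-1)\cdots(2^i-1)},\qquad C:=1+{\cal N}\sum_{i\ge 0}\frac{(-1)^{i+1}}{(2^1-1)\cdots(2^i-1)},$$
where $C$ does not depend on $x$. Both series here converge absolutely, since $(2^1-1)\cdots(2^i-1)\ge 2^{i(i-1)/2}$, so this regrouping is legitimate.

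First I would bound the $x$-dependent term. For $x>0$ set $a_i:=e^{-2^i x}/\bigl((2^1-1)\cdots(2^i-1)\bigr)$, so that $a_0=e^{-x}$ and each $a_i>0$; since $a_{i+1}/a_i=e^{-2^i x}/(2^{i+1}-1)<1$, the $a_i$ strictly decrease, and clearly $a_i\to 0$. By the alternating series test the value of $\sum_{i\ge 0}(-1)^i a_i$ lies between its first two partial sums $a_0-a_1$ and $a_0$; in particular ${\cal N}\sum_{i\ge 0}(-1)^i a_i\in[0,{\cal N}e^{-x}]$.

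Next I would evaluate $C$ by letting $x\to\infty$. Since $Y_\infty=\sum_{i\ge 0}W_{i+1}/2^i$ converges a.s.\ (it even has finite mean $\sum_{i\ge 0}2^{-i}=2$), it is a.s.\ finite, so $\PP(Y_\infty>x)\to 0$; and by the previous paragraph ${\cal N}\sum_{i\ge 0}(-1)^i a_i\to 0$ as $x\to\infty$. Hence $C=0$, and therefore $\PP(Y_\infty>x)={\cal N}\sum_{i\ge 0}(-1)^i a_i\le {\cal N}e^{-x}$, which is the claim. (Equivalently, $C=0$ amounts to the $q$-series identity $\sum_{i\ge 0}(-1)^i/\bigl((2^1-1)\cdots(2^i-1)\bigr)=\prod_{j\ge 1}(1-2^{-j})=1/{\cal N}$, but the limiting argument avoids invoking it.)

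I expect no real obstacle here: the only nontrivial ingredients are the monotonicity $a_{i+1}<a_i$ and the a.s.\ finiteness of $Y_\infty$, both immediate, together with the absolute convergence that justifies splitting off $C$, noted above.
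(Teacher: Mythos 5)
Your proof is correct and follows essentially the same route as the paper: both rearrange $1-F_0(x)$ into an $x$-independent constant plus the alternating series ${\cal N}\sum_{i\ge 0}(-1)^i e^{-2^i x}/((2^1-1)\cdots(2^i-1))$, bound that series by its first term via the alternating series test, and identify the constant as $0$ by letting $x\to\infty$ and using that $\PP(Y_\infty>x)\to 0$. You simply spell out a few details the paper leaves implicit, such as the absolute convergence justifying the regrouping and the explicit ratio test for the monotonicity of the terms.
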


\section{The threshold of the sequence of paths, II}

Returning to the situation of \S 4, we now 
let $n$ be large and
set $L:=\floor{\log_2 n}$, $c'':=\sqrt{\log_2 n}$,
$p:=(c''+\sqrt{c''})/(e 2^{c''})$.  If we use
Proposition \ref{thm12} to estimate $q''$, we will take 
$y:=2(1-(r/2^L))\lambda(1+c''^{-1/2})/(ep)$.
Recalling that $r=(2\log n)/p$ and, since $\lambda=-\log(1-p)$,
$|(\lambda/p)-1|\le p$ for all $0\le p\le \frac 12$,
we find
that $\frac 12\le y\le 1$ for all sufficiently large $n$ and so
$$
q''=\Theta(\frac{1}{\sqrt{c''n}} 2^{c''/2} \Delta^{c''}), \qquad 
\hbox{where\ \ } \Delta:=(1 - \frac{r}{2^L})\frac{\lambda}{p}(1+\frac{1}{\sqrt{c''}}).
$$
Then
$$
pq''^2 \floor{\frac{n}{2L+1}}=\Theta(\frac{\Delta^{2 c''}}{L})
=\Theta(\frac{e^{2\sqrt{c''}}}{L})
$$
will approach infinity as $n\to\infty$, 
so our random distribution
is solvable with a probability that approaches 0 as $n\to\infty$.  This means that, for this choice of $p$,
$$n(\frac{1}{p}-1)=e\frac{2^{\sqrt{\log_2 n}}}{\sqrt{\log_2 n}} n(1 + O(\frac{1}{(\log n)^{1/4}}))$$
is eventually below the geometric threshold of the sequence of families of solvable distributions of the $n$-paths.

We now need to find an upper bound for the geometric threshold.
We first prove a preliminary lemma.
\begin{lemma}  
\label{lemnew}
If $L\in\omega$, $0<p<1$ and $r\in \RR_{\ge 0}$, 
define 
$$
{\cal X}(L,p,r):=\PP(Z_1+\frac{Z_2}{2}+\cdots+\frac{Z_L}{2^{L-1}}<r),
$$
where $Z_1$, \dots, $Z_L$
are i.i.d.~geometric random variables with parameter $p$.
Then
\begin{equation}
\label{xest}
{\cal X}(L,p,r)\le \PP(Y_\infty < (r+3)\lambda) + {\cal N} \exp -2^L \lambda,
\end{equation}
where $\lambda:=-\log(1-p).$
\end{lemma}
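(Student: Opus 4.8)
The plan is to reuse the coupling introduced in \S 4, in which an i.i.d.~family of standard exponentials $W_1$, $W_2$, \dots{} realizes the geometric variables $Z_k$ by $Z_k:=\floor{W_k/\lambda}$, and then to compare ${\cal X}(L,p,r)$ with the probability that an initial segment of $Y_\infty$ is small. First I would observe that $Z_k>W_k/\lambda-1$ for every $k$, so, since $\sum_{1\le k\le L}2^{-(k-1)}<2$,
$$
Z_1+\frac{Z_2}{2}+\cdots+\frac{Z_L}{2^{L-1}}
>\frac1\lambda\left(W_1+\frac{W_2}{2}+\cdots+\frac{W_L}{2^{L-1}}\right)-2.
$$
Writing $A_L:=W_1+W_2/2+\cdots+W_L/2^{L-1}$ (an empty sum, hence $0$, when $L=0$), the event defining ${\cal X}(L,p,r)$ is contained in $\{A_L<(r+2)\lambda\}$, so ${\cal X}(L,p,r)\le\PP(A_L<(r+2)\lambda)$.

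Next I would decompose $Y_\infty=A_L+B_L$, where $B_L:=\sum_{k>L}W_k/2^{k-1}=2^{-L}Y'_\infty$ with $Y'_\infty:=\sum_{i\ge 0}W_{L+1+i}/2^i$, a random variable that is independent of $A_L$ and has the same law as $Y_\infty$. Splitting on whether $Y_\infty<(r+3)\lambda$,
$$
\PP(A_L<(r+2)\lambda)\le\PP(Y_\infty<(r+3)\lambda)
+\PP(Y_\infty\ge(r+3)\lambda,\ A_L<(r+2)\lambda).
$$
On the last event $B_L=Y_\infty-A_L>\lambda$, hence $Y'_\infty>2^L\lambda$; since $2^L\lambda>0$, Proposition \ref{thm13} bounds that probability by ${\cal N}e^{-2^L\lambda}$. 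Combining the two displayed inequalities gives (\ref{xest}).

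There is no real obstacle here: the only substantive inputs are the exponential coupling of \S 4 and the tail estimate of Proposition \ref{thm13}. The ``hard part'' is merely keeping the additive constants straight---the loss of $2$ from truncating the geometric floors and the further $+1$ used to absorb the tail $B_L$---and checking the degenerate case $L=0$, where ${\cal X}(0,p,r)=\PP(0<r)$; there $B_0=Y_\infty$ itself, and the claimed bound, when $r>0$, reduces to $1\le\PP(Y_\infty<(r+3)\lambda)+{\cal N}e^{-\lambda}$, which again follows from Proposition \ref{thm13} together with the continuity of the law of $Y_\infty$ established in \S 5.
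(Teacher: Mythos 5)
Your proof is correct and follows essentially the same route as the paper: the same exponential coupling $Z_k=\floor{W_k/\lambda}$, the same loss of an additive constant $2$ from replacing floors, the same split of $Y_\infty$ into the partial sum and the geometric tail, and the same application of Proposition \ref{thm13} to bound the tail event. The only cosmetic difference is that you phrase the union-bound step as a case split on $\{Y_\infty<(r+3)\lambda\}$ rather than writing the two-term bound directly.
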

\begin{proof}
As in \S 4, let $W_1$, $W_2$, {\dots} be i.i.d.~standard exponential random variables;
then we can realize each $Z_i$ as $\floor{W_i/\lambda}$, so
\begin{eqnarray*}
{\cal X}(L,p,r)&=&\PP(\floor{W_1/\lambda}+\cdots+\frac{\floor{W_L/\lambda}}{2^{L-1}}<r)\\
&\le& \PP((W_1/\lambda)+\cdots+\frac{W_L/\lambda}{2^{L-1}}<r+2)\\
&\le& \PP(\sum_{i\ge 1} \frac{W_i}{\lambda 2^{i-1}}<r+3)+
\PP(\sum_{i\ge L+1} \frac{W_i}{\lambda 2^{i-1}}>1)\\
&=& \PP(Y_\infty<(r+3)\lambda)+\PP(Y_\infty>2^L\lambda)\\
&\le& \PP(Y_\infty<(r+3)\lambda)+{\cal N} \exp -2^L\lambda,
\end{eqnarray*}
by Proposition \ref{thm13}.
\end{proof}
Suppose now as before that we place $Z_i$ pebbles on each vertex $i$, 
where the $Z_i$'s are i.i.d.~geometric random variables with parameter
$p$, and let $M\le L$ be positive integers with $n\ge 2(L+M)+1$.  If
$i\le L+M$, if $i$ is unpebblable, then $\sum_{0\le k\le L-1} Z_{i+k} 2^{-k}$
must be less than 1, and if $i\ge n-(L+M)+1$, if $i$ is unpebblable, then
$\sum_{0\le k\le L-1} Z_{i-k} 2^{-k}$ must be less than 1.  In both 
cases, then, since the $Z_i$'s are i.i.d.,
$$
\PP(\hbox{$i$ unpebblable})\le {\cal X}(L,p,1).
$$
If $L+M+1\le i \le n-(L+M)$, we observe that for $i$ to be unpebblable,
$Z_{i-M}$, \dots, $Z_{i+M}$ must all be less than $2^M$, and
$\sum_{0\le k\le L-1} Z_{i+M+1+k} 2^{-k}$ and 
$\sum_{0\le k\le L-1} Z_{i-M-1-k} 2^{-k}$ must both be less than $2^{M+1}$.
In this case, then,
$$
\PP(\hbox{$i$ unpebblable})\le \PP(Z_1<2^M)^{2M+1} {\cal X}(L,p,2^{M+1})^2.
$$
Summing these probabilities, and using
$$
\PP(Z_1<2^M)= \sum_{0\le j<2^M} \PP(Z_1=j) \le 2^M p,
$$
we find that the probability that our random distribution is 
unsolvable is no more than
\begin{eqnarray}
&\ &2(L+M){\cal X}(L,p,1)+(n-2(L+M))(2^M p)^{2M+1} {\cal X}(L,p,2^{M+1})^2\nonumber\\
&\le& 4L {\cal X}(L,p,1)+n (2^M p)^{2M+1} {\cal X}(L,p,2^{M+1})^2.\label{upper1}
\end{eqnarray}
We now let $n$ be large, set $L:=\floor{\log_2 n}$, $M:=\floor{(\log_2 n)^{1/16}}$, 
$c'':=\sqrt{\log_2 n}$, and $p:=(c''-\sqrt{c''})/(e 2^{c''})$,
and estimate ${\cal X}(L,p,r)$ using (\ref{xest}).
To estimate the first term in (\ref{xest}), we use
Proposition \ref{thm12}, setting $y:=(r+3)\lambda(1-c''^{-1/2})/(ep)$.
In the case $r=1$ we have $1\le y\le 2$ for all sufficiently large $n$
so
\begin{equation}
\label{upper2}
{\cal X}(L,p,1)=\Theta(\frac{1}{\sqrt{c''n}} 2^{3 c''/2} \Delta'^{c''})+O(e^{-\sqrt{n}}),
\ \ \hbox{where} \ \Delta' := \frac{\lambda}{p} (1-\frac{1}{\sqrt{c''}}).
\end{equation}
In the case $r=2^{M+1}$, $\log_2 y = O(M)$ so
\begin{equation}
\label{upper3}
{\cal X}(L,p,2^{M+1})=\Theta(\frac{1}{\sqrt{c''n}} (2^{M+1}+3)^{c''} 2^{-c''/2} 
2^{O(M^2)} \Delta'^{c''})
+O(e^{-\sqrt{n}}).
\end{equation}
Combining (\ref{upper1}), (\ref{upper2}), (\ref{upper3}), and
$\Delta'^{c''}=\Theta(e^{-\sqrt{c''}})$
shows that our random distribution is unsolvable with a probability that
approaches 0 at $n\to\infty$, so,
for this choice of $p$,
$$n(\frac{1}{p}-1)=e\frac{2^{\sqrt{\log_2 n}}}{\sqrt{\log_2 n}} n(1 + O(\frac{1}{(\log n)^{1/4}}))$$
is eventually above the geometric threshold of the sequence of families of solvable 
distributions of the $n$-paths.  Together with our previous lower bound
on the geometric threshold, this proves
\begin{theorem}
\label{penult}  For all positive integers $n$, let the $n$-path 
have $n$ vertices, 1, \dots, $n$, and edges between vertices $i$ and $i+1$
for $i=1$, \dots, $n-1$.  Then the geometric threshold of the
sequence of families of solvable distributions of the $n$-paths is
$$
e\frac{2^{\sqrt{\log_2 n}}}{\sqrt{\log_2 n}} n(1 + O(\frac{1}{(\log n)^{1/4}})).$$
\end{theorem}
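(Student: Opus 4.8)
The plan is to determine the geometric threshold $T'_n$ of the solvable sets of the $n$-paths — which is well defined by Theorem \ref{thm6}, since solvability of a distribution is an upper set not containing the empty distribution — by squeezing it between two explicit quantities that are both asymptotic to $A_n:=e\,2^{\sqrt{\log_2 n}}n/\sqrt{\log_2 n}$. Since $\nu_T(\text{solvable})$ is strictly increasing in $T$ with limits $0$ and $1$, it is enough to exhibit one value of the per-vertex geometric parameter $p$ at which a random distribution is solvable with probability $\to 0$ (so the corresponding $T=n(1/p-1)$ is eventually $<T'_n$) and one, larger, value at which it is solvable with probability $\to 1$ (so that $T$ is eventually $>T'_n$). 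The key bookkeeping fact is that taking $p:=(c''\pm\sqrt{c''})/(e\,2^{c''})$ with $c'':=\sqrt{\log_2 n}$ gives $n(1/p-1)=A_n(1+O((\log n)^{-1/4}))$, with the sign of $\sqrt{c''}$ deciding on which side of $T'_n$ we land.

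For the lower bound I would take the larger parameter $p=(c''+\sqrt{c''})/(e\,2^{c''})$ (fewer pebbles, hence below the threshold) and finish the estimate begun in \S 4. After discarding the $O(1/n)$-probability event that some vertex holds $r:=(2\log n)/p$ or more pebbles, the $\floor{n/(2L+1)}$ equally spaced interior vertices with $L:=\floor{\log_2 n}$ are independently unpebblable with probability at least $pq''^2$, where $q''=\PP(Y_\infty<2\lambda(1-r/2^L))$ and $\lambda=-\log(1-p)$, so solvability has probability at most $1/n+\exp(-pq''^2\floor{n/(2L+1)})$. The one nontrivial input is a two-sided asymptotic for $q''$: applying Proposition \ref{thm12} with $y:=2(1-r/2^L)\lambda(1+c''^{-1/2})/(ep)$ — which lies in $[1/2,1]$ for large $n$ once one notes $|\lambda/p-1|\le p$ — yields $q''=\Theta(\frac{1}{\sqrt{c''n}}2^{c''/2}\Delta^{c''})$ with $\Delta:=(1-r/2^L)(\lambda/p)(1+1/\sqrt{c''})$, whence $pq''^2\floor{n/(2L+1)}=\Theta(\Delta^{2c''}/L)=\Theta(e^{2\sqrt{c''}}/L)\to\infty$. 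So this value of $n(1/p-1)=A_n(1+O((\log n)^{-1/4}))$ is eventually below $T'_n$.

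For the upper bound I would take $p=(c''-\sqrt{c''})/(e\,2^{c''})$ and bound the probability of unsolvability by a union bound over the vertices that can fail to be pebblable. A vertex within distance $L+M$ of an end, with $M:=\floor{(\log_2 n)^{1/16}}$, is unpebblable only if a one-sided weighted geometric sum is $<1$, contributing at most ${\cal X}(L,p,1)$; a deep interior vertex is unpebblable only if its $2M+1$ nearest counts are all $<2^M$ and the two flanking weighted sums are both $<2^{M+1}$, contributing at most $(2^Mp)^{2M+1}{\cal X}(L,p,2^{M+1})^2$. Summing gives unsolvability probability at most the right-hand side of (\ref{upper1}). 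Here Lemma \ref{lemnew} replaces each ${\cal X}(L,p,r)$ by a tail $\PP(Y_\infty<(r+3)\lambda)$ plus the harmless ${\cal N}\exp(-2^L\lambda)$, and Proposition \ref{thm12} with $y:=(r+3)\lambda(1-c''^{-1/2})/(ep)$ (of order $1$ when $r=1$, with $\log_2 y=O(M)$ when $r=2^{M+1}$) evaluates those tails; using $\Delta'^{c''}=\Theta(e^{-\sqrt{c''}})$ with $\Delta':=(\lambda/p)(1-1/\sqrt{c''})$ one checks both terms of (\ref{upper1}) tend to $0$. So $A_n(1+O((\log n)^{-1/4}))$ is eventually above $T'_n$, and combined with the lower bound this gives the claimed estimate.

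The main obstacle is not the (fiddly) combinatorial union bounds but the sharp small-ball asymptotics of $Y_\infty$ — Theorem \ref{thmasymp} and Proposition \ref{thm12} — which are exactly what turn the crude combinatorics into the precise constant $e$ and exponent $\sqrt{\log_2 n}$, and which in turn rest on the contour argument identifying the oscillatory factor ${\cal Q}$ with a reciprocal theta function. Once those are in hand, the remaining effort is the careful choice of the three scales $L$, $M$, $c''$ together with control of all the accumulated error terms (the truncation at $r$, the replacement of $\floor{W_i/\lambda}$ by $W_i/\lambda$, Stirling, and the $\Delta,\Delta'$ expansions) so that everything beyond the leading $A_n$ is absorbed into the stated $O((\log n)^{-1/4})$ relative error, after which the squeeze is immediate.
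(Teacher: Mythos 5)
Your proposal follows the paper's own argument essentially verbatim: the same choices of $p=(c''\pm\sqrt{c''})/(e\,2^{c''})$, the same interior/end-vertex union bound with scales $L$ and $M$, the same reduction via Lemma \ref{lemnew} to a small-ball probability for $Y_\infty$, and the same application of Proposition \ref{thm12} to extract the asymptotics. This is the route taken in \S 4--\S 6, so there is nothing to add.
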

\begin{corollary}
The pebbling threshold of the sequence of $n$-paths is
$$
\Theta(\frac{2^{\sqrt{\log_2 n}}}{\sqrt{\log_2 n}} n).
$$
\end{corollary}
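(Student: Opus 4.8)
The plan is to deduce the corollary from Theorem~\ref{penult} with no new estimates, by passing from the geometric threshold to the uniform threshold via Theorem~\ref{thm10} and then invoking the known equivalence between the uniform threshold and the pebbling threshold of \cite{czy2002}. So let $S_n\subseteq\omega^{B_n}$, $B_n=\{1,\dots,n\}$, be the family of solvable distributions on the $n$-path. First I would record that for each $n$ the set $S_n$ is a nonempty upper set (adding pebbles never destroys solvability) and that $0\notin S_n$ once $n\ge 1$ (the empty distribution is unsolvable), so by the results of \S2 and \S3 both the uniform threshold $(T_n)$ and the geometric threshold $(T'_n)$ of $(S_n)_{n}$ are well defined; any finitely many small values of $n$ (e.g.\ the one-vertex path) are irrelevant to an asymptotic statement and can be discarded.

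Next I would apply Theorem~\ref{penult}, which gives $T'_n = e\,2^{\sqrt{\log_2 n}} n/\sqrt{\log_2 n}\,(1+O((\log n)^{-1/4}))$; in particular $T'_n\to\infty$. Since also $\#B_n = n\to\infty$, the hypotheses of Theorem~\ref{thm10} are satisfied, so $T_n/T'_n\to 1$, and hence the uniform threshold satisfies $T_n=\Theta(2^{\sqrt{\log_2 n}} n/\sqrt{\log_2 n})$ as well. Finally I would cite \cite[Theorem 1.3]{bek2003} (already used in \S4 for exactly this purpose), which says that the pebbling threshold of the sequence of $n$-paths in the sense of \cite{czy2002} lies in the same $\Theta$-class as the uniform threshold of $(S_n)_{n}$; combining this with the previous line yields the asserted $\Theta(2^{\sqrt{\log_2 n}} n/\sqrt{\log_2 n})$.

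Since Theorem~\ref{penult} does all the real work, the only points that need genuine care are: (i) checking that the hypotheses of Theorem~\ref{thm10} truly hold --- in particular that $T'_n$ itself diverges, not merely the target asymptotic expression, which is immediate from the explicit formula in Theorem~\ref{penult}, and that the $S_n$ are upper sets with $0\notin S_n$; and (ii) quoting \cite[Theorem 1.3]{bek2003} in the precise form that converts a uniform threshold as defined in \S1 into a pebbling threshold as defined in \cite{czy2002}. I do not expect any of this to be an obstacle; the corollary is essentially bookkeeping once the path threshold of Theorem~\ref{penult} and the general transfer results of \S2--\S3 are in hand.
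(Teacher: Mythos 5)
Your proposal is correct and follows exactly the paper's own route: Theorem~\ref{penult} for the geometric threshold, Theorem~\ref{thm10} to transfer to the uniform threshold, and \cite[Theorem 1.3]{bek2003} to convert to the pebbling threshold. The extra remarks verifying the hypotheses of Theorem~\ref{thm10} are sound but routine.
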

\begin{proof} 
Use Theorem \ref{penult}, Theorem \ref{thm10} and 
\cite[Theorem 1.3]{bek2003}.
\end{proof}

\section{One-ended path estimates}

\begin{lemma}  
\label{lembp1}
Let $H$ be a graph which contains $L\ge 1$ vertices, $v_1$, \dots, $v_L$,
such that $d(v_1, v_i)=i-1$ for all $i=2$, \dots, $L$.
There exists some absolute constant $G_{-}\ge 3$ such that, if $g\ge G_{-}$
and an independent, geometrically distributed number of pebbles with parameter
$p:=(1+(2^{\sqrt{2 \log_2 g}}e/(2(1+(\log_2 g)^{-1/4})\sqrt{\log_2 g})))^{-1}$
is placed on each of $v_1$, \dots, $v_L$, then, with probability at least $2/g$, 
$v_1$ is unpebblable, provided that, for the set of vertices $V$ of $H$ apart from
$v_1$, \dots, $v_L$,
\begin{equation}
\label{ehyp}
\sum_{x\in V} {\cal Z}(x) 2^{-d(x,v_1)}\le \frac{2e}{\sqrt{\log_2 g}}.
\end{equation}
\end{lemma}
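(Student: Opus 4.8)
The plan is to show that, with probability at least $2/g$, the vertex $v_1$ cannot be pebbled, by forcing three things to happen simultaneously: first, that $v_1$ carries no pebbles; second, that the pebbles arriving along the path $v_1, v_2, \dots, v_L$ from the ``interior'' of $H$ cannot reach $v_1$; and third, that the pebbles outside the path (bounded by hypothesis (\ref{ehyp})) cannot push a pebble onto $v_1$ either. Concretely, if $v_1$ is to be pebblable, then in any sequence of moves the total weighted contribution $\sum_x {\cal Z}(x) 2^{-d(x,v_1)}$ that can be funnelled onto $v_1$ must be at least $1$; this is the standard ``weight function'' bound for pebbling. Splitting this sum into the part supported on $v_1$, the part supported on $v_2, \dots, v_L$, and the part supported on $V$, unsolvability of $v_1$ is implied by the event that ${\cal Z}(v_1)=0$ together with $\sum_{2 \le i \le L} {\cal Z}(v_i) 2^{-(i-1)} < 1 - \frac{2e}{\sqrt{\log_2 g}}$, given hypothesis (\ref{ehyp}); actually a small amount of care is needed because distance from an interior vertex $x\in V$ to $v_1$ may route through the path, but since $d(x,v_1)\le d(x,v_j)+d(v_j,v_1)$ the weighted sum over $V$ is still controlled by (\ref{ehyp}), and the contribution through the far end of the path only helps.

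So it suffices to lower-bound the probability of the event
$$
\Bigl\{\, {\cal Z}(v_1)=0 \ \hbox{ and } \ {\cal Z}(v_1)+\frac{{\cal Z}(v_2)}{2}+\cdots+\frac{{\cal Z}(v_L)}{2^{L-1}} < 1 - \frac{2e}{\sqrt{\log_2 g}}\, \Bigr\}.
$$
Using independence of ${\cal Z}(v_1)$ from ${\cal Z}(v_2),\dots,{\cal Z}(v_L)$, this probability is $p$ times a probability of the shifted form $\PP({\cal Z}(v_2)+\frac{{\cal Z}(v_3)}{2}+\cdots+\frac{{\cal Z}(v_L)}{2^{L-2}} < 2(1-\frac{2e}{\sqrt{\log_2 g}}))$, which is at least ${\cal X}(L-1,p,2(1-\frac{2e}{\sqrt{\log_2 g}}))$ in the notation of a matching companion lemma (or can be bounded directly by the same exponential-coupling argument as in the proof of Lemma \ref{lemnew}): realize each geometric ${\cal Z}(v_i)$ as $\lfloor W_i/\lambda\rfloor$ with $W_i$ i.i.d.\ standard exponential and $\lambda=-\log(1-p)$, drop the floors downward and split off the tail $\sum_{i>L-1}W_i/(\lambda 2^{i-1})$, bounding it via Proposition \ref{thm13}. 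This reduces the whole estimate to a lower bound of the form $c \cdot p \cdot \PP(Y_\infty < \hbox{(small threshold)})$ minus a negligible correction of order $\exp(-2^{L}\lambda)$, which is super-polynomially small since $L\ge 1$ can be taken large (indeed $L$ may be assumed $\ge \log_2 g$ by the structure of the construction in \S 8, but even a crude bound suffices here).

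The final step is the asymptotic bookkeeping: with $p$ as specified, one computes $\lambda/p = 1 + O(p)$, the threshold argument of $Y_\infty$ works out to something of the form $c''y/2^{c''}$ with $c''=\sqrt{2\log_2 g}$ and $y$ a bounded quantity between, say, $1/2$ and $2$, and then Proposition \ref{thm12} (or Theorem \ref{thmasymp}) gives $\PP(Y_\infty < \hbox{threshold}) = \Theta(\frac{1}{\sqrt{c''}} (ey)^{c''} 2^{-c''(c''+1)/2} \cdots)$. Multiplying by $p = \Theta(2^{-c''}\sqrt{c''}/e \cdot \hbox{const})$ and simplifying the powers of $2$ and the factor $e^{c''}$, the product comes out to $\Theta(1/g^{1+o(1)})$ — in fact the constants have been chosen precisely so that it exceeds $2/g$ for all $g$ past an absolute threshold $G_{-}$. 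The main obstacle is this last computation: one must track the factors $e^{c''}$, $2^{-c''(c''+1)/2}$, $y^{c''}$, $\mathcal{Q}(\cdot)$ and the polynomial prefactors carefully enough to see that the exponent of $g$ is exactly $-1$ to leading order and that the subexponential corrections ($\mathcal{Q}$ bounded away from $0$ and $\infty$ by the periodicity established after (\ref{ten}), the $1+O(1/\sqrt{c''})$ error, the $\sqrt{c''}$ factors) do not overwhelm the slack between $2/g$ and the true value. The ``$3$'' in the threshold $(r+3)\lambda$ of Lemma \ref{lemnew} and the ``$\ge 3$'' in $G_{-}\ge 3$ are there precisely to absorb the floor corrections in this coupling; choosing $G_{-}$ large finishes the proof.
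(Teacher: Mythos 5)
Your overall strategy matches the paper's: use the weight function $Q:=\sum_x {\cal Z}(x) 2^{-d(x,v_1)}$ to reduce unpebblability of $v_1$ to a sub-unit bound on the path contribution, couple the geometrics to exponentials via $Z_i=\lfloor W_i/\lambda\rfloor$, reduce to a tail probability for $Y_\infty$, and then invoke Proposition~\ref{thm12} with $c''=\sqrt{2\log_2 g}$. The cosmetic difference is that you factor off the event ${\cal Z}(v_1)=0$ as a separate factor of $p$; the paper instead observes that since the threshold $1-\frac{2e}{\sqrt{\log_2 g}}$ is less than $1$, the event $\sum_{i\ge 1} Z_i 2^{-(i-1)}<1-\frac{2e}{\sqrt{\log_2 g}}$ already forces $Z_1=0$, so no separate factoring is needed. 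Either decomposition produces a valid lower bound, and the powers of $2$ and of $e$ shuffle around to give essentially the same final estimate.

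There is, however, a genuine misstep in how you handle the passage from the finite sum over $v_2,\dots,v_L$ to the infinite series. You propose to split off a tail $\sum_{i>L-1} W_i/(\lambda 2^{i-1})$, control it via Proposition~\ref{thm13}, and subtract a correction of order $\exp(-2^L\lambda)$. That correction would be disastrous for small $L$ (for $L=1$ it is nearly $1$), and to save it you slip in the assumption ``$L$ may be assumed $\ge\log_2 g$'' --- but the lemma as stated allows any $L\ge 1$, and you cannot import hypotheses from later sections into the proof of this lemma. Fortunately, the correction is spurious. The tail estimate of Lemma~\ref{lemnew} and Proposition~\ref{thm13} is only needed when proving an \emph{upper} bound on a probability of the form $\PP(\text{finite weighted sum}<r)$: there the finite sum can be small while the infinite sum is large, so the infinite-sum probability must be padded by a tail term. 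For a \emph{lower} bound --- which is what you need here --- the inclusion $\{Y_\infty<\lambda r\}\subseteq\{\sum_{1\le i\le L} Z_i 2^{-(i-1)}<r\}$ holds directly (augment by extra i.i.d.\ $Z_{L+1},Z_{L+2},\dots$, which only shrinks the event, and use $Z_i\le W_i/\lambda$), so no error term whatsoever appears and the bound is uniform in $L\ge 1$. With that repair, and a careful execution of the asymptotic bookkeeping you sketch (in particular tracking the factor $(1+(\log_2 g)^{-1/4})^{c''}\approx e^{2^{1/4}\sqrt{c''}}$ coming from the ``$+$'' built into the definition of $p$, which is what eventually beats $2/g$), your argument closes and reproduces the paper's proof.
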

\begin{proof} This is similar to the first half of the proof of Theorem \ref{penult}.
Let $Z_i$ be the number of pebbles on $v_i$, $i=1$, \dots, $L$. 
The quantity $$Q:=\sum_{\hbox{\small $x$ a vertex of $H$}} {\cal Z}(x) 2^{-d(x,v_1)}$$ is at least 1 if there is a pebble
on $v_1$, and it cannot be increased by pebbling moves.  It follows that $v_1$ will be unpebblable
provided that $Q<1$, which, by (\ref{ehyp}), will certainly be true if
$\sum_{1\le i\le L} Z_i 2^{-(i-1)} < 1-(2e/\sqrt{\log_2 g})$; so, if we let $Z_{L+1}$, $Z_{L+2}$, \dots be 
additional independent geometric random variables with parameter $p$, $v_1$ will be be 
unpebblable if $\sum_{i\ge 1} Z_i 2^{-(i-1)} < 1-(2e/\sqrt{\log_2 g})$.
As in \S 4, we can now set $Z_i:=\floor{W_i/\lambda}$, where $\lambda:=-\log(1-p)$
and $W_1$, $W_2$, {\dots} are i.i.d.~standard exponential random variables,
so it suffices for unpebblability that $$Y_{\infty} < \lambda(1-\frac{2e}{\sqrt{\log_2 g}}).$$
We can compute the probability $q$ of this event using Proposition \ref{thm12}, setting
$$c'':=\sqrt{2 \log_2 g},\ \ 
p':=(p^{-1}-1)^{-1}=\frac{2\sqrt{\log_2 g}}{e 2^{\sqrt{2 \log_2 g}}} (1 + (\log_2 g)^{-1/4}), $$
$$ y:=\frac{\sqrt{2}}{e}\Delta, \ \ \ \Delta:= \frac{\lambda}{p} \frac{1}{p' + 1} (1 + (\log_2 g)^{-1/4}) (1-\frac{2e}{\sqrt{\log_2 g}}).$$
For large $g$, $\Delta$ will be close to 1, so
after choosing $G_{-}$ appropriately, $y$ will be between $\frac 12$ and 1.
According then to the proposition, if we choose $G_{-}$ so as to make $c''$ sufficiently large,
there is some positive constant $C$ such that
$$q\ge C (ey)^{c''} 2^{-c''(c''+1)/2}/\sqrt{c''},$$
or such that $q\ge C \Delta^{c''} /(g\sqrt{c''})$.
Now $\Delta$ is a function only of $g$ and,
for large $g$,
$\log \Delta=2^{1/4} c''^{-1/2} + O(c''^{-1})$,
so choose $G_{-}$ large enough to ensure that 
$\Delta^{c''}/\sqrt{c''} \ge 2/C$.
\end{proof} 

\begin{lemma}  
\label{lembp2}
Let $H$ be a graph which contains a path with $L\ge 1$ vertices, $v_1$, \dots, $v_L$.
Then there exists some absolute constant $G_{+}\ge 3$ such that, if $g\ge G_{+}$
and an independent, geometrically distributed number of pebbles with parameter
$p:=(1+(2^{\sqrt{2 \log_2 g}}e/(2(1-(\log_2 g)^{-1/4})\sqrt{\log_2 g})))^{-1}$
is placed on each of $v_1$, \dots, $v_L$, then 
(A) if $L\ge 1.1\sqrt{2 \log_2 g}$, with probability at least $1-1/(4g)$,
$v_1$ is pebblable, and (B) if
$$
2.2\sqrt{2 \log_2 g}\le L\le \exp (2 \log_2 g)^{1/4},
$$
with probability at least $1-1/(4g)$,
all of $v_1$, \dots, $v_L$ are pebblable.
\end{lemma}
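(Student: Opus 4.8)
The plan is to follow the pattern of the upper-bound half of Theorem~\ref{penult} and of Lemma~\ref{lembp1}, replacing the global bound over a path of length $n$ by local estimates. The key structural fact is that on a path $v_1 - v_2 - \cdots - v_L$ one can move at least one pebble onto $v_j$ whenever ${\cal Z}(v_j)\ge 1$, or the ``left weight'' $\sum_{i<j}{\cal Z}(v_i)2^{-(j-i)}\ge 1$, or the ``right weight'' $\sum_{i>j}{\cal Z}(v_i)2^{-(i-j)}\ge 1$: each such weighted count is non-increasing under pebbling moves and is at least $1$ once a pebble sits on the target, while conversely one folds from the far end of the arm toward the target, using that the partial weights are dyadic rationals so that the floor losses never exceed the available slack (this is exactly the endpoint argument, applied to one arm). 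Hence $v_j$ is unpebblable only if ${\cal Z}(v_j)=0$, the left weight is $<1$, \emph{and} the right weight is $<1$. Writing $Z_i:={\cal Z}(v_i)$ and $\lambda:=-\log(1-p)$ and coupling $Z_i=\lfloor W_i/\lambda\rfloor$ with i.i.d.\ standard exponentials $W_i$ as in \S4, every probability that arises is of the form ${\cal X}(m,p,r)$ from Lemma~\ref{lemnew}, and the heart of the proof is a sharp enough upper bound on these.

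For (A): $v_1$ is pebblable whenever $\sum_{i=1}^L Z_i2^{-(i-1)}\ge 1$, so $\PP(v_1\hbox{ unpebblable})\le {\cal X}(L,p,1)$. The bound ${\cal X}(L,p,1)\le \PP(Y_\infty<4\lambda)+{\cal N}\exp(-2^L\lambda)$ coming straight from Lemma~\ref{lemnew} is, as one checks with Proposition~\ref{thm12} (taking $c'':=\sqrt{2\log_2 g}$ and $y\approx 4\sqrt2/e$), only of order $g^{-1}2^{2\sqrt{2\log_2 g}}$, which is far too large; the saving must come from the integrality of the $Z_i$. Instead one uses that $\{\sum_{i=1}^L Z_i2^{-(i-1)}<1\}$ forces $Z_i<2^{i-1}$, i.e.\ $W_i<2^{i-1}\lambda$, for every $i$, and that inside this box the remaining constraint $\sum Z_i 2^{-(i-1)}<1$ is a simplex-type condition which, the number of relevant coordinates being $\Theta(\sqrt{2\log_2 g})$, cuts the probability by a factorial factor $\approx 1/\Gamma(\sqrt{2\log_2 g})$. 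The choice of $p$ makes the box probability of order $g^{-1}e^{-\sqrt{2\log_2 g}}\Gamma(\sqrt{2\log_2 g})$ (the $(\log_2 g)^{-1/4}$ perturbation enters here as a lower-order correction), so the product is of order $g^{-1}/\sqrt{\log_2 g}$, which is $\le 1/(4g)$ once $g\ge G_+$. (Equivalently one can iterate ${\cal X}(L,p,1)=\PP(Z_1=0)\,{\cal X}(L-1,p,2)$ for $\Theta(\sqrt{2\log_2 g})$ steps, ending at $\PP(Y_\infty<O(1))$, a constant, and arrive at the same bound.) The hypothesis $L\ge 1.1\sqrt{2\log_2 g}$ is used precisely so that $L$ exceeds $\log_2(1/\lambda)+O(1)$ — the index past which $2^{i-1}\lambda$ is large — so that the box/simplex estimate sees all the relevant coordinates and so that the truncation term ${\cal N}\exp(-2^L\lambda)$ is doubly-exponentially negligible.

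For (B): a union bound over $j=1,\dots,L$. By the structural fact and the independence of the three blocks of $Z_i$'s involved, $\PP(v_j\hbox{ unpebblable})\le p\,{\cal X}(j-1,p,2)\,{\cal X}(L-j,p,2)$, with ${\cal X}(0,p,2)=1$. Split $\sum_{j=1}^L p\,{\cal X}(j-1,p,2){\cal X}(L-j,p,2)$ into the $O(\sqrt{2\log_2 g})$ values of $j$ with one arm shorter than $\approx\frac12\sqrt{2\log_2 g}$ and the remaining ``deep interior'' ones. Deep in the interior both arms have length $\ge\frac12\sqrt{2\log_2 g}$, so both ${\cal X}$ factors are bounded by a crude box estimate of order $g^{-3/4}$; with at most $L\le\exp((2\log_2 g)^{1/4})$ such $j$ their total contribution is $\ll L\,p\,g^{-3/2}\ll 1/(8g)$. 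For $j$ near an end one arm contributes a factor $\le 1$, but the other has length $\ge 1.1\sqrt{2\log_2 g}$ because $L\ge 2.2\sqrt{2\log_2 g}$, so its ${\cal X}$ factor is controlled exactly as in (A), and the surviving factor $p$ makes the contribution $\ll \sqrt{2\log_2 g}\cdot p\cdot g^{-1}/\sqrt{\log_2 g}\ll 1/(8g)$. Summing, the probability that some $v_j$ is unpebblable is $\le 1/(4g)$ for $g\ge G_+$.

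The main obstacle is precisely the estimate of ${\cal X}(m,p,r)$: the off-the-shelf bound of Lemma~\ref{lemnew} loses a factor $\approx 4^{\sqrt{2\log_2 g}}$ (traceable to its ``$+3$'') and so is useless at the required precision $1/(4g)$, and one must extract the combinatorial (simplex/factorial) gain hidden in the integrality of the $Z_i$, then reconcile it with the truncation tail controlled by Proposition~\ref{thm13} and the scaling estimate of Proposition~\ref{thm12}. Pinning down those constants, and verifying that the specific numbers $1.1$, $2.2$, the $\mp(\log_2 g)^{-1/4}$ built into $p$, and a large enough $G_+$ make every inequality close (the margin in (A) is only a factor $\sqrt{\log_2 g}$), is the bulk of the work; the remainder is bookkeeping parallel to \S6.
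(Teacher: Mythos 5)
Your overall plan (union bound plus estimates of the one\hbox{-}sided probabilities ${\cal X}(m,p,r)$) is the same as the paper's, but the tactics you propose for beating the crude bound of Lemma~\ref{lemnew} are different from the paper's and, as sketched, contain genuine gaps. The paper's (A) argument sets $M:=\floor{(\log_2 g)^{1/16}}$, uses that unpebblability of $v_1$ forces $Z_j<2^M$ for $j\le M+1$ together with a remaining one\hbox{-}sided condition, so that $\PP(v_1\hbox{ unpebblable})\le (2^Mp)^{M+1}{\cal X}(L{-}M{-}1,p,2^{M+1})$, and then applies Lemma~\ref{lemnew} plus Proposition~\ref{thm12}. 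With $c'':=\sqrt{2\log_2 g}$, the factor $p^{M+1}\approx 2^{-c''(M+1)}$ exactly cancels the $2^{(M+1)c''}$ appearing in the Proposition~\ref{thm12} estimate of the tail, and the decisive remaining factor is $\Delta'^{c''}\approx e^{-\sqrt 2(\log_2 g)^{1/4}}$, which comes \emph{entirely} from the $(1-(\log_2 g)^{-1/4})$ perturbation built into $p$. You call that perturbation a ``lower-order correction,'' which is backwards: the paper has no simplex-style combinatorial gain at all, and without the perturbation the bound does not even tend to zero.

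Concretely: (i) your proposed ``simplex'' factor $\approx 1/\Gamma(\sqrt{2\log_2 g})$ is asserted but not proved, and is nontrivial since, conditioned on the box, the coordinates $Z_i/2^{i-1}$ are close to uniform only for $i\ll -\log_2 p$ and are exponentially weighted near the upper end; (ii) the parenthetical claim that one can ``iterate ${\cal X}(L,p,1)=\PP(Z_1{=}0){\cal X}(L{-}1,p,2)$'' fails at step two — from threshold $2$ one can only peel off the inequality $\le\PP(Z_1<2){\cal X}(L{-}2,p,4)$, and iterating that to depth $\Theta(c'')$ recovers the crude box bound, not the simplex gain; (iii) the stated box magnitude has the wrong exponent, since $(1-(\log_2 g)^{-1/4})^{c''}\approx e^{-\sqrt 2(\log_2 g)^{1/4}}$, not $e^{-\sqrt{2\log_2 g}}$, and the margin in the paper is precisely this small; (iv) for (B), your ``deep interior'' bound ${\cal X}(m,p,2)\lesssim g^{-3/4}$ for $m\approx\tfrac12\sqrt{2\log_2 g}$ is unsupported — the crude box gives $g^{-3/4}$ only up to a large factorial\hbox{-}type factor, and Lemma~\ref{lemnew}'s tail term ${\cal N}\exp(-2^m\lambda)$ is not small at this arm length since $2^m\lambda\to0$; and (v) your ``near the end'' estimate double\hbox{-}counts the $p$ from $\PP(Z_j=0)$: part (A) bounds $p\,{\cal X}(\cdot,p,2)\le 1/(4g)$, so ${\cal X}(L{-}j,p,2)$ by itself is of order $1/(4gp)\gg 1/g$, not $g^{-1}/\sqrt{\log_2 g}$. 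The paper sidesteps (iv) and (v) entirely by a one\hbox{-}arm argument in (B): for each $i$ pebble only from the longer side of length $\ge\floor{L/2}\ge 1.1\sqrt{2\log_2 g}$, replace $L-M-1$ by $\floor{L/2}-M$ in the (A) computation, and absorb the union\hbox{-}bound factor $L\le\exp((2\log_2 g)^{1/4})$ into the existing $e^{-\sqrt 2(\log_2 g)^{1/4}}$ margin.
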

\begin{proof} This is similar to the second half of the proof of Theorem \ref{penult}.
We start with (A).
Let $Z_j$ be the number of pebbles on $v_j$, $j=1$, \dots, $L$.
Set $M:=\floor{(\log_2 g)^{1/16}}$.  Since $L\ge 1.1\sqrt{2 \log_2 g}$, we can choose 
$G_{+}$ large enough to ensure that $L\ge M+1$.  
For $v_1$ to be unpebblable, we must have $Z_{j}<2^M$, $j=1$, \dots, $M+1$, and
$\sum_{0\le j\le L-M-2} Z_{M+2+j} 2^{-j} < 2^{M+1}$;
since the $Z_j$'s are independent and geometrically distributed with 
parameter $p$, 
this will have probability no more than
$(2^Mp)^{M+1} X$, where $X:={\cal X}(L-M-1,p,2^{M+1}),$
and by Lemma \ref{lemnew}, $X\le q+X'$, where
$$
q:= \PP(Y_\infty < (2^{M+1}+3)\lambda),\ \ \ 
X':= {\cal N}\exp - 2^{L-M-1} \lambda,
$$
$$
\lambda:=-\log(1-p).
$$
For an appropriate choice of $G_{+}$, we will have 
$X'\le {\cal N} \exp -2^{0.09\sqrt{2 \log_2 g}},$
so by choosing $G_{+}$ large enough, we can force $X'$
to be less than $\frac 18$ when multiplied by $(2^Mp)^{M+1} g$.
To estimate $q$, use Proposition \ref{thm12}, setting
$$c'':=\sqrt{2 \log_2 g},\ \ 
p':=(p^{-1}-1)^{-1}=\frac{2\sqrt{\log_2 g}}{e 2^{\sqrt{2 \log_2 g}}} (1 - (\log_2 g)^{-1/4}), $$
$$ y:=2^{M+1}\frac{\sqrt{2}}{e}\Delta', \ \ \ \Delta':= \frac{\lambda}{p} \frac{1}{p' + 1} (1 - (\log_2 g)^{-1/4}) (1 + \frac{3}{2^{M+1}}).$$
For large $g$, $\Delta'$ will be close to 1, so $2^M\le y\le 2^{M+1}$, and 
by the proposition, if we choose $G_{+}$ appropriately, we will have, for some constant $C>0$,
$$
q\le \frac{C}{g \sqrt{c''}} 2^{(M+1)c''} 2^{-(M-1)M/2} \Delta'^{c''},
$$
so
\begin{equation}
\label{e002}
(2^Mp)^{M+1}qg\le \frac{C}{\sqrt{c''}} (\frac{\sqrt{2}}{e} c'')^{M+1} 2^{M(M+3)/2} \Delta''^{M+1} \Delta'^{c''},
\end{equation}
where
$$
\Delta'':= \frac{1}{p'+1} (1-(\log_2 g)^{-1/4}).
$$
The logarithm of the right-hand side of (\ref{e002})
is $$-2^{1/4} c''^{1/2} + \frac{M(M+3)\log 2}{2} + O(M \log \log g),$$ 
so we can choose $G_{+}$ so that the right-hand side of (\ref{e002}) is less than $\frac 18$.

For (B), it will suffice to show that for each $i=1$, \dots, $L$, $v_i$
is unpebblable with probability no more than $1/(4gL)$.  We fix some $i$ and let
$\delta:=1$ if $i\le L/2$, $\delta:=-1$ if $i>L/2$; we now try to move pebbles 
onto $v_i$ from $v_{i+\delta}$, $v_{i+2\delta}$, \dots, $v_{i+\floor{L/2}\delta}$.
The proof is then similar to (A), except that $L-M-1$ is replaced by $\floor{L/2}-M$;
also, since we have assumed that $L\le e^{\sqrt{c''}}$, we must bound $e^{\sqrt{c''}} (2^M p)^{M+1} X$ 
instead of $(2^M p)^{M+1} X$.
\end{proof} 

\section{The bouquet of paths}

For positive $n$ and $L$ and nonnegative $g$ such that $g(L-1)+1\le n$, 
let the graph ${\cal B}_{n,g,L}$ be the graph which has $n$ vertices 
and is made by taking $g$ paths with $L$ vertices each and a complete graph, choosing one vertex from
the complete graph and one end-vertex from each of the paths, and identifying these $g+1$ vertices into
a single vertex.  Also,
for a graph $H$ with $n>0$ vertices, we define the {\em geometric pebbling threshold} of $H$ to be the 
unique positive real $x$ for which, if an independent, geometrically distributed number of pebbles with 
parameter $(1+x/n)^{-1}$ is placed on each of the vertices of $H$, the probability of the
distribution being solvable is $\frac 12$.  (See Theorem \ref{thm6} for a proof that 
this probability is strictly increasing with $x$ and that this definition is sensible.)

\begin{lemma}
\label{lemc}
For all $\delta>0$ there is some $m_0=m_0(\delta)$ such that,
if $\alpha\in\RR_{\ge 0}$,
$N$ is a sum of $m$ independent geometric random variables with parameter $(1+\alpha)^{-1}$, 
and both $m$ and $\alpha m$ exceed $m_0$,
then $$\PP(|N-\alpha m|\le \delta \alpha m)\ge \frac 89.$$
\end{lemma}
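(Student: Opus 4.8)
The plan is to prove this as a routine application of Chebyshev's inequality, exactly as was done inside the proof of Proposition~\ref{thm9}. Write $N=G_1+\cdots+G_m$, where the $G_j$ are i.i.d.~geometric random variables with parameter $p:=(1+\alpha)^{-1}$. As recalled in the proof of Proposition~\ref{thm9}, a geometric random variable $G_p$ with parameter $p$ has $\EE G_p=p^{-1}-1$ and $\Var G_p=p^{-2}-p^{-1}$; here $p^{-1}=1+\alpha$, so $\EE G_j=\alpha$ and $\Var G_j=(1+\alpha)^2-(1+\alpha)=\alpha+\alpha^2=\alpha(1+\alpha)$. By independence, $\EE N=\alpha m$ and $\Var N=\alpha(1+\alpha)m$.

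Chebyshev's inequality then gives, for any $t>0$,
$$
\PP(|N-\alpha m|>t)\le\frac{\alpha(1+\alpha)m}{t^2}.
$$
Taking $t:=\delta\alpha m$, the right-hand side becomes
$$
\frac{\alpha(1+\alpha)m}{\delta^2\alpha^2m^2}=\frac{1+\alpha}{\delta^2\alpha m}=\frac{1}{\delta^2}\left(\frac{1}{\alpha m}+\frac{1}{m}\right).
$$
So it suffices to choose $m_0=m_0(\delta)$ large enough that whenever $m>m_0$ and $\alpha m>m_0$ we have $(1/(\alpha m)+1/m)/\delta^2\le 1/9$; for instance $m_0:=\ceil{18/\delta^2}$ works, since then $1/m<\delta^2/18$ and $1/(\alpha m)<\delta^2/18$, so the bound on $\PP(|N-\alpha m|>\delta\alpha m)$ is at most $1/9$, whence $\PP(|N-\alpha m|\le\delta\alpha m)\ge 8/9$. (Note that if $\alpha=0$ then $N=0$ almost surely and the inequality is trivial, but in any case $\alpha m>m_0\ge 1$ forces $\alpha>0$.)

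There is essentially no obstacle here: the only thing to be slightly careful about is that both deviation contributions $1/(\alpha m)$ and $1/m$ must be controlled, which is exactly why the hypothesis requires \emph{both} $m$ and $\alpha m$ to exceed $m_0$ rather than just one of them — the variance-to-mean-squared ratio is $(1+\alpha)/(\alpha m)$, and making this small needs $\alpha m$ large (to handle the $\alpha/(\alpha m)=1/m$ piece as well as the $1/(\alpha m)$ piece). Everything else is the standard sum-of-i.i.d.\ second-moment computation.
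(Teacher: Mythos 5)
Your proof is correct and uses exactly the same approach as the paper: compute the mean $\alpha m$ and variance $\alpha(1+\alpha)m$ of $N$ and apply Chebyshev's inequality, choosing $m_0$ large enough to make the failure probability at most $1/9$. You have merely spelled out the final bookkeeping that the paper leaves implicit.
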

\begin{proof}  A geometric random variable with parameter $(1+\alpha)^{-1}$ has mean $\alpha$
and variance $\alpha(1+\alpha)$, so $N$ has mean $m\alpha$ and variance $m\alpha(1+\alpha)$.
Then, use Chebyshev's inequality. \end{proof}

\begin{proposition}   
\label{tbouquet1}
There is some integer $G_0\ge 3$ such that if $g\ge G_0$,
$2gL\le n$, and 
$$
\sqrt{2 \log_2 g} \le L - \log_2 n\le \exp (2\log_2 g)^{1/4},
$$
then the geometric pebbling threshold of ${\cal B}_{n,g,L}$ is
$\alpha n$, where
$$
\alpha:=\beta (1+\eta)^{-1}, \ \ 
\beta:=\frac{2^{\sqrt{2\log_2 g}} e}{2 \sqrt{\log_2 g}},\ \ 
\qquad |\eta|\le (\log_2 g)^{-1/4}.
$$
\end{proposition}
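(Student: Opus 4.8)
The plan is to sandwich the geometric pebbling threshold of ${\cal B}_{n,g,L}$ between $\alpha_- n$ and $\alpha_+ n$, where $\alpha_{\pm}:=\beta/(1\pm(\log_2 g)^{-1/4})$ are precisely the means of the two geometric distributions appearing in Lemmas \ref{lembp1} and \ref{lembp2}. Since by Theorem \ref{thm6} the probability that the distribution is solvable is strictly increasing in the parameter, it suffices to show that with parameter $(1+\alpha_-)^{-1}$ this probability is below $\frac12$ and that with parameter $(1+\alpha_+)^{-1}$ it is above $\frac12$; this forces the threshold to be $\alpha n$ with $\alpha_-<\alpha<\alpha_+$, i.e.\ $\alpha=\beta(1+\eta)^{-1}$ with $|\eta|<(\log_2 g)^{-1/4}$. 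Write ${\cal B}_{n,g,L}$ as a clique $K$ with $\#V(K)=n-g(L-1)\ge n/2$ (using $2gL\le n$), joined at a single vertex $z\in K$ to $g$ paths; label the $j$-th path $v_1^{(j)},\dots,v_L^{(j)}$ with $v_L^{(j)}=z$ and $v_1^{(j)}$ the far end, so that its private vertices $v_1^{(j)},\dots,v_{L-1}^{(j)}$ lie at distances $L-1,\dots,1$ from $z$. The $g$ far ends $v_1^{(j)}$ are the only hard-to-pebble vertices, and everything is organized around them.

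For the lower bound, place pebbles with the parameter of Lemma \ref{lembp1} (mean $\alpha_-$) and condition on the pebbles of $K$. By Lemma \ref{lemc} (or Chebyshev), outside a clique-measurable event of probability $o(1)$ one has ${\cal Z}(z)\le\alpha_-\log n$ and $\sum_{x\in K}{\cal Z}(x)\le\frac32 n\alpha_-$. For a fixed $j$, the only way pebbles outside path $j$'s private vertices can help pebble $v_1^{(j)}$ is to be gathered first on $z$, and the weight function relative to $z$ bounds the number so gathered by ${\cal Z}(z)+\frac12\sum_{x\in K\setminus\{z\}}{\cal Z}(x)+W_{-j}$, where $W_{-j}:=\sum_{k\ne j}\sum_{i=1}^{L-1}{\cal Z}(v_i^{(k)})2^{-i}$. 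On the further event of probability $1-o(1)$ that the total path weight $W:=\sum_k\sum_i{\cal Z}(v_i^{(k)})2^{-i}$ does not much exceed its mean (which is $O(g\alpha_-)$), I may therefore replace ${\cal B}_{n,g,L}$, solely for the purpose of pebbling $v_1^{(j)}$, by the fixed graph $H_j$ obtained from $K$ and path $j$ by adding $\Phi$ pebbles to $z$, where $\Phi=O(n+g\alpha_-\sqrt{\log_2 g})$ depends only on $n$ and $g$ and is chosen large enough that $H_j$ can gather on $z$ at least as many pebbles as ${\cal B}_{n,g,L}$ can gather there from outside path $j$; pebblability of $v_1^{(j)}$ in ${\cal B}_{n,g,L}$ then implies it in $H_j$. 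The hypothesis $L-\log_2 n\ge\sqrt{2\log_2 g}$ gives $2^L\ge n\,2^{\sqrt{2\log_2 g}}$, and, combined with $\alpha_-<\beta=e\,2^{\sqrt{2\log_2 g}}/(2\sqrt{\log_2 g})$ and $g\le n/(2L)$, a short computation shows that on the good clique event the weight at $v_1^{(j)}$ contributed by $z$ (carrying ${\cal Z}(z)+\Phi$ pebbles) and by $K\setminus\{z\}$ (at distance $L$ from $v_1^{(j)}$) is at most $\frac{3e}{2\sqrt{\log_2 g}}<\frac{2e}{\sqrt{\log_2 g}}$, so hypothesis (\ref{ehyp}) holds. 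Lemma \ref{lembp1}, applied to the length-$(L-1)$ private path with the rest of $H_j$ playing the role of $V$, then shows that $v_1^{(j)}$ is unpebblable in $H_j$ with conditional probability at least $2/g$ given the clique pebbles. In $H_j$ this event depends only on the private pebbles of path $j$, so the $g$ events are conditionally independent given the clique pebbles; hence the probability that every $v_1^{(j)}$ is pebblable in ${\cal B}_{n,g,L}$ is at most $(1-2/g)^g+o(1)\le e^{-2}+o(1)<\frac12$, and the threshold exceeds $\alpha_- n$.

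For the upper bound, place pebbles with the parameter of Lemma \ref{lembp2}, whose mean $\alpha_+$ is of order $2^{\sqrt{2\log_2 g}}/\sqrt{\log_2 g}\to\infty$. Then $K$ holds about $n\alpha_+\gg n$ pebbles, so with probability $1-o(1)$ at least two vertices of $K$ carry $\ge2$ pebbles (making every vertex of $K$, including $z$, pebblable) and $K$ can move at least $n$ pebbles onto $z$. For the private vertices of path $j$, apply Lemma \ref{lembp2}(B) to the subpath $v_1^{(j)},\dots,v_{L'}^{(j)}$ with $L':=\min(L-1,\floor{e^{(2\log_2 g)^{1/4}}})$: its hypotheses hold for $g$ large, since $L-1\ge\log_2 n+\sqrt{2\log_2 g}-1\ge\log_2 g+\sqrt{2\log_2 g}\ge 2.2\sqrt{2\log_2 g}$ (using $n\ge 2gL$) and since $e^{(2\log_2 g)^{1/4}}$ eventually dominates $\sqrt{2\log_2 g}$; thus $v_1^{(j)},\dots,v_{L'}^{(j)}$ are all pebblable except with probability $1/(4g)$. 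A private vertex $v_i^{(j)}$ not covered by this can occur only when $L-1>L'$, and then $L-\log_2 n\le e^{(2\log_2 g)^{1/4}}$ forces $L-i<\log_2 n$, so $v_i^{(j)}$ lies within distance $\log_2 n$ of $z$ and hence is pebblable once $n\ge 2^{L-i}$ pebbles sit on $z$. Summing over the $g$ paths, the distribution is unsolvable with probability at most $g\cdot\frac1{4g}+o(1)=\frac14+o(1)<\frac12$, so the threshold is below $\alpha_+ n$. Taking $G_0\ge3$ large enough to validate every ``for $g$ large'' step completes the proof.

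The main difficulty is the lower bound: the $g$ events ``$v_1^{(j)}$ is pebblable'' are monotone increasing in the pebble configuration, hence positively correlated, so their joint probability cannot be bounded by a product directly. The remedy is the phantom-pebble reduction, which trades the actual (bounded but path-coupled) inflow of pebbles into path $j$ for a fixed number of pebbles on $z$, giving at once the conditional independence needed for the $(1-2/g)^g$ estimate and a clean upper bound on the left side of (\ref{ehyp}). The only other delicate point is arithmetical — checking that the weight contributed at $v_1^{(j)}$ by the clique, by $z$'s own conditioned pebbles, and by the $\Phi$ phantom pebbles still fits inside the budget $2e/\sqrt{\log_2 g}$ of (\ref{ehyp}) — and this is exactly where $L-\log_2 n\ge\sqrt{2\log_2 g}$ enters.
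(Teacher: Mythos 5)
Your proof is correct in overall outline (sandwich the threshold between $\alpha_\pm n$, use Lemma \ref{lembp1} for the lower bound, Lemma \ref{lembp2}(B) plus clique-gathering for the upper bound, Lemma \ref{lemc} to control the fluctuations). The upper-bound half is essentially the same as the paper's. But the lower-bound half takes a genuinely different route, and the comparison is instructive.

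You flag, correctly, that the events ``$v_1^{(j)}$ is unpebblable'' are monotone in the configuration, hence positively correlated, so the product bound $(1-2/g)^g$ cannot be applied to them directly. Your fix is to condition on the clique pebbles and replace the pebbles arriving from the other paths by a deterministic number $\Phi$ of phantom pebbles on $z$, trading the actual graph for a family $H_j$ in which the relevant unpebblability events are conditionally independent given the clique. This works, but it forces you to verify a pebbling-domination claim (pebblability in ${\cal B}_{n,g,L}$ implies pebblability in $H_j$ on the good event, via a layered/greedy reorganization of the moves) and to juggle two good events ($G_1$ for the clique, $G_2$ for the total path weight $W$), being careful that the union bound insulates the conditional-independence step from the $W$-conditioning. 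The paper's proof is shorter because it sidesteps the correlation issue entirely: Lemma \ref{lembp1}'s proof actually exhibits an explicit sub-event $E_j := \{\sum_{1\le i\le L-1} Z^{(j)}_i 2^{-(i-1)} < 1 - 2e/\sqrt{\log_2 g}\}$, depending \emph{only} on path $j$'s private pebbles, which has probability at least $2/g$ and which, on the single global event $C:=\{N\le 2\alpha n\}$, forces $v_1^{(j)}$ to be unpebblable. Since the $E_j$ live on disjoint coordinates they are exactly independent (no conditioning needed), and ``solvable'' $\cap\, C \subseteq \bigcap_j E_j^c$ gives $\PP(\hbox{solvable}) \le \PP(C^c) + (1-2/g)^g$ directly. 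So the paper buys a cleaner and shorter argument at the cost of leaning on the internal structure of Lemma \ref{lembp1}'s proof rather than its statement; your version is more self-contained relative to the statement of Lemma \ref{lembp1} and makes the independence mechanism explicit, at the cost of the phantom-pebble reduction and the extra conditioning. Minor side remark: your $\Phi = O(n + g\alpha_-\sqrt{\log_2 g})$ is an overestimate --- $\Phi$ only needs to dominate the weight arriving at $z$ from the \emph{other} paths, which is $O(g\alpha_-)$, since the clique pebbles are already present in $H_j$ --- but the larger $\Phi$ still fits inside the $(2e/\sqrt{\log_2 g})$ budget of (\ref{ehyp}) after the arithmetic, so this does not break anything.
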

\begin{proof}
Let $H:={\cal B}_{n,g,L}$ and let
$\alpha:=\beta (1+\eta)^{-1}$, where $\eta$ is now arbitrary but satisfies
$|\eta|\le (\log_2 g)^{-1/4}$.
We have $L\ge 2$ and
by taking $G_0$ large enough we can ensure that $g\ge G_{-}$, $g\ge G_{+}$,
$\beta\ge 12$, $\alpha\ge 1$, $\log_2 n\ge 1.2 \sqrt{2 \log_2 g}$,
and $\exp (2 \log_2 g)^{1/4}\ge \ceil{2.2 \sqrt{2 \log_2 g}}.$
Suppose that an independent, geometrically distributed number of pebbles with parameter $(1+\alpha)^{-1}$
is placed on each vertex of $H$. 

Set $\eta:=(\log_2 g)^{-1/4}$, consider one of the paths 
$v_1$, \dots, $v_L$ which was identified to make $H$, and let its unidentified end-vertex be $v_1$.  
Let the total number of pebbles on $H$ be $N$, assume that $N\le 2\alpha n$,
and let $V$ be the set of vertices of $H$ apart from $v_1$, \dots, $v_{L-1}$.
Then
$$
\sum_{x\in V} {\cal Z}(x) 2^{-d(x,v_1)}\le 2^{-(L-1)} \sum_{x\in V}{\cal Z}(x)\le N 2^{-(L-1)}
\le 2\alpha n 2^{-(L-1)}
$$
and
$$
2\alpha n 2^{-(L-1)} \le 4 \beta 2^{-\sqrt{2 \log_2 g}} =
\frac{2e}{\sqrt{\log_2 g}},
$$
so we can apply Lemma \ref{lembp1} to this path (with $L$ decreased by 1) to show that 
$v_1$ is unpebblable with probability at least $2/g$.
After doing this 
to each of the paths in $H$ in turn
we can conclude that the probability that the distribution is solvable is
no more than
\begin{equation}
\label{lbd}
\PP(N>2\alpha n) + (1-\frac{2}{g})^g\le \PP(N>2\alpha n)+e^{-2}.
\end{equation}
If we apply Lemma \ref{lemc} (with $m:=n$), then, since $\alpha\ge 1$ and $n\ge 2gL$,
we can choose $G_0$ so that $n$ and $\alpha n$ are forced to be so large that
$\PP(|N-\alpha n|\le\alpha n)\ge \frac 89$.  Since $\frac 19+e^{-2}<\frac 12$,
(\ref{lbd}) then implies that the geometric pebbling threshold of ${\cal B}_{n,g,L}$ is at least 
$\beta n(1+(\log_2 g)^{-1/4})^{-1}$.

Set $\eta:=-(\log_2 g)^{-1/4}$. 
Let $N'$ be the total number of pebbles on the vertices of the complete graph which was identified to make $H$,
and let there be $m$ of these vertices.  If $N'\ge \alpha m/2$, then,
since $m=n-g(L-1)\ge n-gL\ge n/2$,
$$N'\ge \frac{\alpha m}{2}\ge \frac{\alpha n}{4}\ge \frac{\beta n}{4}\ge 3 n.$$
By moving from vertices in the complete graph to any vertex $w$ of the complete graph, we can
then place at least $(3n-m)/2\ge (3n-n)/2=n$ pebbles on $w$.
Now, again consider one of the paths 
$v_1$, \dots, $v_L$ which was identified to make $H$, letting its unidentified end-vertex be $v_1$.  
By moving from vertices in the complete graph to $v_L$, we can place at least $n$ pebbles on $v_L$; 
by moving along the path,
we can then place at least one pebble on $v_{L-j}$, for any $j=1$, \dots, $\ceil{\log_2 n}-1$.
If $L-\ceil{\log_2 n}\ge 2.2 \sqrt{2 \log_2 g}$, we can apply Lemma \ref{lembp2} to the path
with $L$ decreased by $\ceil{\log_2 n}$ and conclude that, with probability at least $1-1/(4g)$,
each of $v_1$, \dots, $v_{L-\ceil{\log_2 n}}$ are pebblable; otherwise, we can apply Lemma \ref{lembp2}
to the path with $L$ replaced by $\ceil{2.2 \sqrt{2 \log_2 g}}$ and conclude that, with probability at least
$1-1/(4g)$, each of $v_1$, \dots, $v_{\ceil{2.2 \sqrt{2 \log_2 g}}}$ are pebblable.
Applying this reasoning to each path, then, the probability that $H$ is solvable is at least
$$
\PP(N'\ge \frac{\alpha m}{2})-g\frac{1}{4g}=\PP(N'\ge \frac{\alpha m}{2})-\frac14.
$$
If we apply Lemma \ref{lemc}, since $m\ge n/2$, we can choose $G_0$ so that
$\PP(|N'-\alpha m|\le\frac 12 \alpha m)\ge \frac 89$.  Since $\frac89 -\frac 14>\frac 12$, this
means that the geometric pebbling threshold of ${\cal B}_{n,g,L}$ is no more than
$\beta n(1-(\log_2 g)^{-1/4})^{-1}$, completing the proof.
\end{proof}

\begin{lemma}
\label{lemfxn}
If we define $\Phi: \RR_{\ge 0}\to \RR_{\ge 0}$ by $$\Phi(\alpha):=\frac{\alpha^2}{2\alpha+1},$$
then $\Phi$ is a strictly increasing bijection, and if $\alpha_2>\alpha_1>0$,
$$
(\frac{\alpha_2}{\alpha_1})^2\ge \frac{\Phi(\alpha_2)}{\Phi(\alpha_1)} \ge \frac{\alpha_2}{\alpha_1}.
$$
\end{lemma}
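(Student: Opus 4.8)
The plan is to treat the two assertions separately; both reduce to elementary algebra after clearing denominators, so I do not expect any genuine obstacle.

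First I would show that $\Phi$ is a strictly increasing bijection of $\RR_{\ge 0}$ onto itself. Since $\Phi(0)=0$, $\Phi$ is continuous on $\RR_{\ge 0}$, and $\Phi(\alpha)=\alpha^2/(2\alpha+1)\to\infty$ as $\alpha\to\infty$, it suffices to prove strict monotonicity: this gives injectivity, and the intermediate value theorem then shows the image is all of $\RR_{\ge 0}$. For $0\le\alpha_1<\alpha_2$, the inequality $\Phi(\alpha_1)<\Phi(\alpha_2)$ is, after multiplying through by the positive quantity $(2\alpha_1+1)(2\alpha_2+1)$, equivalent to $\alpha_1^2(2\alpha_2+1)<\alpha_2^2(2\alpha_1+1)$; and
$$
\alpha_2^2(2\alpha_1+1)-\alpha_1^2(2\alpha_2+1)=(\alpha_2-\alpha_1)(2\alpha_1\alpha_2+\alpha_1+\alpha_2),
$$
which is positive because $\alpha_2-\alpha_1>0$ and $2\alpha_1\alpha_2+\alpha_1+\alpha_2>0$ (the latter since $\alpha_2>0$). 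Alternatively, one could simply note $\Phi'(\alpha)=2\alpha(\alpha+1)/(2\alpha+1)^2>0$ for $\alpha>0$.

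For the two-sided estimate, suppose $\alpha_2>\alpha_1>0$ and write
$$
\frac{\Phi(\alpha_2)}{\Phi(\alpha_1)}=\Big(\frac{\alpha_2}{\alpha_1}\Big)^2\cdot\frac{2\alpha_1+1}{2\alpha_2+1}.
$$
Since $\alpha_1<\alpha_2$ we have $0<(2\alpha_1+1)/(2\alpha_2+1)<1$, so the right-hand side is at most $(\alpha_2/\alpha_1)^2$, which gives the upper bound. For the lower bound, I would observe that $\tfrac{\alpha_2}{\alpha_1}\cdot\tfrac{2\alpha_1+1}{2\alpha_2+1}\ge 1$ is equivalent to $\alpha_2(2\alpha_1+1)\ge\alpha_1(2\alpha_2+1)$, i.e.\ to $\alpha_2\ge\alpha_1$, which holds; multiplying by $\alpha_2/\alpha_1>0$ then yields $\Phi(\alpha_2)/\Phi(\alpha_1)\ge\alpha_2/\alpha_1$.

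The only point requiring a little care is keeping track of which inequalities are strict and which are not, but since the statement asks only for $\ge$ in the two-sided estimate this is harmless; there is no substantive difficulty.
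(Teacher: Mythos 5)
Your proof is correct, and since the paper's own proof is simply the word ``Easy,'' you have filled in exactly the elementary algebraic verification the paper leaves to the reader; the factorization $(\alpha_2-\alpha_1)(2\alpha_1\alpha_2+\alpha_1+\alpha_2)$, the derivative computation, and the ratio decomposition $\Phi(\alpha_2)/\Phi(\alpha_1)=(\alpha_2/\alpha_1)^2(2\alpha_1+1)/(2\alpha_2+1)$ all check out.
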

\begin{proof}  Easy. \end{proof}

The following result is similar to \cite[Theorem 4]{czy2008}.

\begin{proposition}   
\label{tbouquet2}
For any $0<\epsilon<1$, there is some
integer $L_0=L_0(\epsilon)\ge 2$ such that if $g\ge 1$, $2gL\le \epsilon n$,
and $L_0\le L\le (\log_2 n)-L_0$,
then the geometric pebbling threshold of ${\cal B}_{n,g,L}$ is
$\alpha n$, where
$$
\alpha:=\beta (1+\eta), \ \ 
\beta>0\hbox{\ and \ } \Phi(\beta)=\frac{2^{L-1}}{n},\ \ 
\qquad |\eta|\le \epsilon.
$$
\end{proposition}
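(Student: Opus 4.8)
The plan is to show that with geometric parameter $(1+\alpha)^{-1}$ on every vertex the distribution is unsolvable with probability $>\frac12$ when $\alpha:=\beta(1-\epsilon)$, and solvable with probability $>\frac12$ when $\alpha:=\beta(1+\epsilon)$; since this probability is strictly increasing in $\alpha$ by Theorem~\ref{thm6}, the geometric pebbling threshold then lies strictly between $\beta(1-\epsilon)n$ and $\beta(1+\epsilon)n$, which is the claim. Here $\beta=\Phi^{-1}(2^{L-1}/n)$ is well defined since $\Phi$ is a strictly increasing bijection of $\RR_{\ge0}$ (Lemma~\ref{lemfxn}); we may assume $\epsilon\le\frac16$, as proving the statement for a smaller $\epsilon$ proves it for the given one. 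Label the common identified vertex $c$ and the $k$th path $v^{(k)}_1,\dots,v^{(k)}_L=c$, so that $v^{(k)}_1$ is a leaf at distance $L-1$ from $c$ and $v^{(k)}_1,\dots,v^{(k)}_{L-1}$ is a path pendant at $c$. Two elementary facts drive everything: if $Z$ is geometric with parameter $(1+\alpha)^{-1}$ then $\lfloor Z/2\rfloor$ is again geometric and $\EE\lfloor Z/2\rfloor=\Phi(\alpha)$ (a one-line computation); and in any graph the quantity ${\cal Z}(w)+\sum_{x\ne w}\lfloor{\cal Z}(x)/2\rfloor$ is non-increasing under pebbling moves and bounds from above the number of pebbles that can ever be placed on a given vertex $w$ (checking the three kinds of move), while on a complete graph this bound is achieved up to the boundary term by moving $\lfloor{\cal Z}(x)/2\rfloor$ pebbles from each $x$ directly to $w$.

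\emph{Lower bound} ($\alpha=\beta(1-\epsilon)$). Delete $v^{(1)}_1,\dots,v^{(1)}_{L-1}$ from ${\cal B}_{n,g,L}$ and consider the weight function
$$\Psi:=\sum_{j=1}^{L-1}{\cal Z}(v^{(1)}_j)2^{-(j-1)}+2^{-(L-1)}\left({\cal Z}(c)+\sum_{x}\lfloor{\cal Z}(x)/2\rfloor\right),$$
where $x$ ranges over the remaining $n-L$ vertices. Checking each kind of pebbling move shows $\Psi$ is non-increasing: the moves along the pendant path $v^{(1)}_1\!-\!\cdots\!-\!c$ leave the linear part exactly balanced, moves among the remaining vertices and into $c$ cannot increase $\sum_x\lfloor{\cal Z}(x)/2\rfloor$ by the floor-weight argument above, and the moves between $c$ and $v^{(1)}_{L-1}$ and out of $c$ are balanced or decreasing. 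A pebble ever reaching $v^{(1)}_1$ forces $\Psi\ge1$ initially, so it suffices to show $\Psi<1$ with probability $>\frac12$. Now $\sum_x\lfloor{\cal Z}(x)/2\rfloor$ is a sum of $n-L$ i.i.d.\ terms of mean $\Phi(\alpha)$, and $\Phi(\alpha)=\Phi(\beta(1-\epsilon))\le(1-\epsilon)\Phi(\beta)=(1-\epsilon)2^{L-1}/n$ by Lemma~\ref{lemfxn}, so its mean is at most $(1-\epsilon)2^{L-1}$; since the variance of $\lfloor Z/2\rfloor$ is $O(\Phi(\alpha))$, Chebyshev's inequality gives $\sum_x\lfloor{\cal Z}(x)/2\rfloor\le(1-\frac\epsilon2)2^{L-1}$ with high probability, provided $L_0$ is large enough in terms of $\epsilon$ (so that $2^{L-1}\ge 2^{L_0-1}$ dominates the fluctuation). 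Finally $L\le(\log_2 n)-L_0$ forces $\Phi(\beta)=2^{L-1}/n\le2^{-L_0-1}$, hence makes $\beta$, and so $\alpha$, as small as we please; then the two ``noise'' terms ${\cal Z}(c)$ and $\sum_j{\cal Z}(v^{(1)}_j)2^{-(j-1)}$ (of expectation $O(\alpha)$) are below $\frac\epsilon8 2^{L-1}$ and $\frac\epsilon8$ respectively with high probability, by crude tail bounds. Hence $\Psi\le1-\frac\epsilon4<1$, so $v^{(1)}_1$ is unpebblable, with probability $>\frac12$.

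\emph{Upper bound} ($\alpha=\beta(1+\epsilon)$). Because $2gL\le\epsilon n$, the number of non-$c$ vertices of the complete part is $m-1\ge n-gL-1\ge(1-\frac34\epsilon)n$ for $n$ large (which is guaranteed by $L\ge L_0$ and $L\le(\log_2 n)-L_0$). By Lemma~\ref{lemfxn}, $\Phi(\alpha)\ge(1+\epsilon)\Phi(\beta)=(1+\epsilon)2^{L-1}/n$, so $\sum_x\lfloor{\cal Z}(x)/2\rfloor$ over those $m-1$ vertices has mean at least $(1-\frac34\epsilon)(1+\epsilon)2^{L-1}\ge(1+\frac18\epsilon)2^{L-1}$ (using $\epsilon\le\frac16$), and Chebyshev makes this sum at least $2^{L-1}$ with high probability once $L_0$ is large in terms of $\epsilon$. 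On that event we may move $2^{L-1}$ pebbles onto $c$ through the complete part alone; then for each $k$ we pebble any $v^{(k)}_i$ (it lies within distance $L-1$ of $c$) by pushing pebbles along the $k$th path, and any complete-part vertex by moving one pebble to it from the rest of the clique. So the distribution is solvable with probability $>\frac12$.

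The genuinely routine part is the constant-chasing that fixes $L_0(\epsilon)$ so that (i) the sums of $\Theta(n)$ i.i.d.\ terms concentrate to within an $\epsilon$-fraction of $2^{L-1}$ — which is why $2^{L-1}\ge2^{L_0-1}$ must be large, i.e.\ why $L\ge L_0$ — and (ii) the resulting $\beta$ is small enough that the path-$1$ and hub contributions are negligible — which is why $\beta\le\Phi^{-1}(2^{-L_0-1})$ must be small, i.e.\ why $L\le(\log_2 n)-L_0$. The one point needing thought is the choice of $\Psi$: the naive weight $\sum_x{\cal Z}(x)2^{-d(x,v^{(1)}_1)}$ is far too lossy here, since the $\sim n$ scattered single pebbles push it above $1$ even well below threshold, so one must use the integrality-aware floor-weight off the path; this is exactly what makes the per-vertex loss factor at the hub equal $\Phi$ and what makes the two bounds meet at $\beta$.
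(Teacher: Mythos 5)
Your proposal is correct and takes essentially the same approach as the paper's proof: the same potential function $\Psi$ for the lower bound, the same observation that $\lfloor{\cal Z}(x)/2\rfloor$ is geometrically distributed with parameter $(1+\Phi(\alpha))^{-1}$ and mean $\Phi(\alpha)$ (which is why $\Phi$ appears), the same Chebyshev concentration of $\sum_x\lfloor{\cal Z}(x)/2\rfloor$ around $\Phi(\alpha)\cdot(\text{size})$, and the same upper-bound strategy of accumulating $2^{L-1}$ pebbles on the hub and distributing them along the paths and across the clique. The only minor divergence is in the lower bound's treatment of the leaf path's own contribution: the paper routes this through the exponential realization and the tail of $Y_\infty$ (via Lemma~\ref{lembp1}, observing that $\PP(Y_\infty<\lambda\epsilon/2)\to1$ as $\alpha\to0$), whereas you use a bare Markov/tail bound on $\sum_j{\cal Z}(v_j)2^{-(j-1)}$ together with a separate tail bound on ${\cal Z}(c)$; at the coarse level of accuracy needed here, where $L\le(\log_2 n)-L_0$ only serves to drive $\alpha$ arbitrarily small, the two are interchangeable and yours is the slightly more elementary route.
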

\begin{proof}
Fix $\epsilon$, let $H:={\cal B}_{n,g,L}$ and 
$\alpha:=\beta (1+\eta)$, where $\eta$ is arbitrary such that $|\eta|\le \epsilon$,
and suppose that an independent, geometrically 
distributed number of pebbles with parameter $(1+\alpha)^{-1}$
is placed on each vertex of $H$.   

Set $\eta:=-\epsilon$ and let $v_1$, \dots, $v_L$ be one of the paths which was identified to make $H$, 
with $v_1$ being the unidentified end vertex.  If $V$ is the set of vertices in $H$ other than
$v_1$, \dots, $v_L$, then 
$$
{\cal Z}(v_1) +\frac{{\cal Z}(v_2)}{2} + 
\cdots+\frac{{\cal Z}(v_L)}{2^{L-1}} + 2^{-(L-1)} \sum_{x\in V} \floor{\frac{{\cal Z}(x)}{2}}
$$
is at least 1 if there is a pebble on $v_1$, and it cannot be increased by pebbling moves.
Arguing as in Lemma \ref{lembp1}, then, the probability that $v_1$ is unpebblable conditioned on the distribution on $V$ is 
at least
$$\PP(Y_\infty<\lambda(1-2^{-(L-1)} N)),$$
where
$$
N:= \sum_{x\in V} \floor{\frac{{\cal Z}(x)}{2}}, \ \ \ \lambda:=\log(1+\alpha^{-1}).
$$
Now, for each $x$, $\floor{{\cal Z}(x)/2}$ is independently 
geometrically distributed
with parameter $1-(1-(1+\alpha)^{-1})^2=(1+\Phi(\alpha))^{-1}$.
Also, $n\ge 2gL\ge 2 L_0$, so if $m$ is the number of vertices in $V$, then
$$m\ge n-gL\ge n(1-\frac{\epsilon}{2})\ge L_0,$$
and, by Lemma \ref{lemfxn},
$$\Phi(\alpha) m
\le \Phi(\beta) (1-\epsilon) m=\frac{2^{L-1}m}{n}(1-\epsilon)\le 2^{L-1}(1-\epsilon)$$
and
$$\Phi(\alpha) m\ge \Phi(\beta)(1-\epsilon)^2 m = \frac{2^{L-1}m}{n}(1-\epsilon)^2\ge 
2^{L_0-1}(1-\epsilon)^2(1-\frac{\epsilon}{2}).$$
Using Lemma \ref{lemc}, then, we can choose $L_0$ large enough so that
$N$ is no more than $2^{L-1} (1-(\epsilon/2))$
with probability at least $\frac 89$, so the probability that $v_1$ is unpebblable is at least
\begin{equation}
\label{e3}
\frac{8}{9} \PP(Y_\infty<\frac{\lambda\epsilon}{2}).
\end{equation}
Since $L\le (\log_2 n)-L_0$, we have
$\Phi(\beta)\le 2^{-L_0-1}$, so we may choose $L_0$ large enough so that $\beta$ is forced
to be small enough, and $\lambda\epsilon/2$ large enough, so that (\ref{e3}) is greater than $\frac 12$.
This proves that, for an appropriate choice of $L_0$, the geometric pebbling threshold of ${\cal B}_{n,g,L}$
is at least $\beta n (1-\epsilon)$.

Now, set $\eta:=\epsilon$, let $V'$ be the set of vertices on the complete graph which was identified to make $H$,
let $V'$ have size $m'$,
and let $N':=\sum_{x\in V'} \floor{{\cal Z}(x)/2}$.  If $N'\ge 2^{L-1}$, we can place $2^{L-1}$ pebbles on the 
identified vertex in $H$, and from there place at least one pebble on any vertex in $H$.  We need then to show
that $\PP(N'\ge 2^{L-1})>\frac 12$.  As before, for each $x$, $\floor{{\cal Z}(x)/2}$ is 
independently geometrically distributed
with parameter $(1+\Phi(\alpha))^{-1}$, so since
$$m'\ge n-gL\ge n(1-\frac{\epsilon}{2})\ge L_0$$
and, by Lemma \ref{lemfxn},
\begin{eqnarray*}
\Phi(\alpha) m'\ge \Phi(\beta)(1+\epsilon)m'\ge 2^{L-1}(1+\epsilon)(1-\frac{\epsilon}{2})
&=& 2^{L-1}(1+ \frac{\epsilon(1-\epsilon)}{2})\\
&\ge& 2^{L_0-1},
\end{eqnarray*}
we can choose $L_0$ large enough so that, by Lemma \ref{lemc}, 
$\PP(N'\ge 2^{L-1})\ge \frac 89$.  This proves that the geometric pebbling threshold 
of ${\cal B}_{n,g,L}$ is no more than $\beta n (1+\epsilon)$, completing the proof.
\end{proof}

\section{The pebbling threshold spectrum}

\begin{lemma}
\label{distlem}
Let $H$ be a connected graph with $n\ge 2$ vertices, let $v$ be a vertex of $H$ such that
all vertices of $H$ are within distance $d\in\ZZ$ of $v$, $d\ge 2$, and let each vertex of $H$ have
a number of pebbles which is independently geometrically distributed with parameter $(1+\alpha)^{-1}$,
$\alpha\in\RR_{>0}$.  Then 
$v$ is unpebblable
with probability at most
$$
\left(
\frac{
e(2^{d-1}+\ceil{n^{1/d}-1}-1)
}{
\ceil{n^{1/d}-1}(1+\Phi(\alpha))
}
\right)^{\ceil{n^{1/d}-1}}.
$$
\end{lemma}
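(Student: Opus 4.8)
The plan is to reduce the event that $v$ is unpebblable to the event that a sum of $k:=\ceil{n^{1/d}-1}$ i.i.d.\ geometric variables fails to reach $2^{d-1}$, and then to bound the latter by a union bound.

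I would first extract a high-degree site from the hypothesis. Run a breadth-first search from $v$, let $V_j$ be the set of vertices at distance $j$ from $v$, and put $n_j:=\#V_j$; then $n_0=1$, $\sum_j n_j=n$, and $V_j=\emptyset$ for $j>D$, where $D\le d$ is the eccentricity of $v$ (and $D\ge1$ since $n\ge2$). For $u\in V_j$ with $0\le j<D$, write $c(u)$ for the number of neighbours of $u$ lying in $V_{j+1}$, and set $B:=\max c(u)$. Since every vertex of $V_{j+1}$ has a neighbour in $V_j$, we get $n_{j+1}\le B\,n_j$ for all $j$, hence $n_j\le B^j$, and so
$$
n=\sum_{j=0}^{D}n_j\le\sum_{j=0}^{d}B^j\le\sum_{j=0}^{d}{d\choose j}B^j=(B+1)^d
$$
(here $B\ge1$, since otherwise $V_1=\emptyset$ and $n=1$). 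Thus $B\ge n^{1/d}-1$, whence $B\ge k\ge1$. Choose $\ell<D$ and $w\in V_\ell$ with $c(w)=B$, fix a shortest path $v=u_0,u_1,\dots,u_\ell=w$, and choose $k$ distinct neighbours $x_1,\dots,x_k$ of $w$ lying in $V_{\ell+1}$; note $\ell\le d-1$ and that $\{x_1,\dots,x_k\}$ is disjoint from $\{u_0,\dots,u_\ell\}$ (whose members lie at distances $0,\dots,\ell$ from $v$).

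Next I would carry out the pebbling reduction. If $\sum_{i=1}^{k}\floor{{\cal Z}(x_i)/2}\ge 2^{\ell}$, then $v$ is pebblable: for each $i$, move $\floor{{\cal Z}(x_i)/2}$ pebbles from $x_i$ onto $w$, placing at least $2^\ell$ pebbles on $w=u_\ell$, then move these down the path $u_\ell,u_{\ell-1},\dots,u_0$, halving the pile at each step, leaving a pebble on $v$. Since the $x_i$ are distinct and disjoint from the path, all these moves are legitimate and draw on disjoint stacks of pebbles. Hence the probability that $v$ is unpebblable is at most
$$
\PP\!\left(\sum_{i=1}^{k}\floor{{\cal Z}(x_i)/2}<2^{\ell}\right)\le\PP\!\left(\sum_{i=1}^{k}G_i\le 2^{d-1}-1\right),
$$
where we used $2^\ell\le2^{d-1}$ and where $G_1,\dots,G_k$ are i.i.d.; as noted in the proof of Proposition~\ref{tbouquet2}, each $\floor{{\cal Z}(x_i)/2}$ is geometric with parameter $(1+\Phi(\alpha))^{-1}$, so each $G_i$ is. Realising the $G_i$ through independent Bernoulli trials with success probability $(1+\Phi(\alpha))^{-1}$, $\sum_i G_i$ is the number of failures preceding the $k$-th success, so $\sum_i G_i\le2^{d-1}-1$ exactly when the first $N:=2^{d-1}+k-1$ trials contain at least $k$ successes. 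That event is contained in the union, over the ${N\choose k}$ choices of $k$ of the trials, of the events that those $k$ trials are all successes, each of probability $(1+\Phi(\alpha))^{-k}$; hence its probability is at most ${N\choose k}(1+\Phi(\alpha))^{-k}\le (eN/k)^{k}(1+\Phi(\alpha))^{-k}$, using $k!\ge(k/e)^{k}$. Since $N=2^{d-1}+k-1$, this is exactly the asserted bound.

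The step I expect to be the main obstacle is the combinatorial one: converting the bare hypothesis that $H$ is connected with every vertex within distance $d$ of $v$ into a single vertex possessing at least $\ceil{n^{1/d}-1}$ neighbours one level further out. The engine is the telescoping bound $n_j\le B^j$ together with $\sum_{j\le d}B^j\le(B+1)^d$; once this is in hand, the pebbling construction and the union bound are routine, the only point requiring care being the disjointness bookkeeping that keeps the moves legal and the $G_i$ independent.
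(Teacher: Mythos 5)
Your proof is correct and reaches exactly the asserted bound, but by a genuinely different route in both of its two nontrivial steps. For the combinatorial step (extracting a vertex $w$ with many next-level neighbours), the paper applies pigeonhole to the increments of $\log D'_i$ (where $D'_i$ counts vertices within distance $i$), getting some $i$ with $D'_{i+1}/D'_i\ge n^{1/d}$ and hence $D_{i+1}/D_i\ge n^{1/d}-1$; you instead bound the level sizes by $n_j\le B^j$ and sum to $n\le(B+1)^d$. The two arguments have roughly equal weight, though yours avoids logarithms and so is slightly more elementary. For the tail bound, the paper invokes the Chernoff-type large-deviations inequality of Arratia--Gordon for the binomial and then relaxes the exponent using $y\log y\ge y-1$, which also forces a preliminary case split on whether $p<p'$; you instead use the crude union bound $\PP(\mathrm{Bin}(N,p)\ge k)\le\binom{N}{k}p^k\le(eN/k)^kp^k$, which applies unconditionally and lands on the same expression without the case split or the external citation. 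That is a real simplification: it makes the lemma self-contained at no cost. Your explicit note that $\{x_1,\dots,x_k\}$ is disjoint from the shortest path $u_0,\dots,u_\ell$ (because they lie at distance $\ell+1$ from $v$ while the path vertices lie at distances $0,\dots,\ell$) is also a small but welcome piece of bookkeeping that the paper leaves implicit.
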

\begin{proof}
For $i=0$, 1, \dots, let $D_i$ be the number of vertices at distance $i$
from $v$ and let $D'_i:=\sum_{0\le j\le i} D_j$.  Since $\log D'_0=0$ and 
$\log D'_d=\log n$, there must be some $0\le i\le d-1$ for which 
$(\log D'_{i+1})-(\log D'_i)\ge (\log n)/d$, 
and then $D_{i+1}/D'_i = (D'_{i+1}/D'_i)-1\ge n^{1/d}-1$.  Since $D'_i\ge D_i$, 
$D_{i+1}/D_i$ is also at least $n^{1/d}-1$.  This means that there must be some vertex $w$ at
distance $i$ from $v$ which has at least $\ceil{n^{1/d}-1}$ neighbors at distance $i+1$.  Letting a set
of $\ceil{n^{1/d}-1}$ of these neighbors be $V$, $v$ will be pebblable if
\begin{equation}
\label{e301}
\sum_{x\in V} \floor{{\cal Z}(x)/2}\ge 2^{d-1},
\end{equation}
since if so we can move $2^{d-1}$ pebbles to
$w$ and then place a pebble on $v$.  Each $\floor{{\cal Z}(x)/2}$ is independently
geometrically distributed with
parameter $p:=(1+\Phi(\alpha))^{-1}$, so the probability of (\ref{e301}) is the probability that,
if we flip a coin with success probability $p$, it takes at least $2^{d-1}+\ceil{n^{1/d}-1}$
flips to get $\ceil{n^{1/d}-1}$ successes.  Another way of saying this is that there are no more than
$\ceil{n^{1/d}-1}-1$ successes in $2^{d-1}+\ceil{n^{1/d}-1}-1$ flips, so if (\ref{e301}) is false,
there must be at least $\ceil{n^{1/d}-1}$ successes in this number of flips.  Set 
$$p':=\ceil{n^{1/d}-1}/(2^{d-1}+\ceil{n^{1/d}-1}-1).$$
If $\ceil{n^{1/d}-1}(1+\Phi(\alpha))\le 2^{d-1}+\ceil{n^{1/d}-1}-1$,
then the claimed bound on the probability of unpebblability is 1 or greater
and there is nothing to prove.  We can assume then that
$\ceil{n^{1/d}-1}(1+\Phi(\alpha))>2^{d-1}+\ceil{n^{1/d}-1}-1$;
now $p<p'<1$, so by \cite[Theorem 1]{arr1989}, the probability that (\ref{e301}) is
false is no more than
$$
\exp -(2^{d-1}+\ceil{n^{1/d}-1}-1)\Omega, \ \ \hbox{where\ } 
$$
\begin{equation}
\label{e302}
\Omega:= p' \log \frac{p'}{p} + (1-p') \log \frac{1-p'}{1-p}.
\end{equation}
Since $y\log y\ge y-1$ for all $0<y<1$, $\Omega\ge p' (-1 + \log (p'/p))$.  Together with (\ref{e302}),
this gives the claimed bound.
\end{proof}

\begin{theorem} \label{geoub} There is some $n_0\ge 3$ such that if $H$ is 
a connected graph with $n\ge n_0$ vertices, then the geometric pebbling
threshold of $H$ is no more than
$$
\frac{2^{\sqrt{2\log_2 n}} e}{2 \sqrt{\log_2 n}} (1 - (\log_2 n)^{-1/4})^{-1} n.
$$
\end{theorem}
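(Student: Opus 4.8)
The plan is to prove the bound directly from the definition: set
$\alpha:=(2^{\sqrt{2\log_2 n}}e/(2\sqrt{\log_2 n}))(1-(\log_2 n)^{-1/4})^{-1}$
and suppose an independent, geometrically distributed number of pebbles with parameter $(1+\alpha)^{-1}$ is placed on each vertex of $H$; this is precisely the distribution $\nu_{\alpha n}$ of \S 1 with base set the vertex set of $H$. By Theorem \ref{thm6} the probability of solvability is strictly increasing in the subscript, so it suffices to show this probability is at least $\frac12$ (I will in fact get at least $\frac34$) once $n\ge n_0$, for then the geometric pebbling threshold of $H$ --- the subscript at which this probability equals $\frac12$ --- is at most $\alpha n$. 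By a union bound over the $n$ vertices, it is enough to prove $\PP(v\ \hbox{unpebblable})\le 1/(4n)$ for each fixed vertex $v$. I would fix $d^\ast:=\ceil{1.1\sqrt{2\log_2 n}}$ and split according to whether $\mathrm{ecc}(v):=\max_x d(x,v)$ is at least $d^\ast$.

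Suppose first $\mathrm{ecc}(v)\ge d^\ast$. Choose $x$ with $d(x,v)\ge d^\ast$ and let $v=v_1,v_2,\dots,v_L$ be the vertices of a shortest $v$--$x$ path, so these form a path in $H$ on $L=d(x,v)+1\ge d^\ast+1\ge 1.1\sqrt{2\log_2 n}$ vertices. With $g:=n$, the parameter appearing in Lemma \ref{lembp2} is exactly $(1+\alpha)^{-1}$, so (after taking $n_0\ge G_{+}$) part (A) of that lemma shows that $v_1$ can be pebbled using only the pebbles on $v_1,\dots,v_L$ with probability at least $1-1/(4n)$. Since pebblability can only increase when more pebbles are present, and the distribution on $H$ dominates the one that keeps the pebbles on $v_1,\dots,v_L$ and deletes all others, $\PP(v\ \hbox{unpebblable})\le 1/(4n)$.

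Suppose instead $\mathrm{ecc}(v)<d^\ast$. Then every vertex of $H$ lies within distance $d:=\max(2,\mathrm{ecc}(v))$ of $v$, and $2\le d\le d^\ast-1\le 1.1\sqrt{2\log_2 n}$ for $n\ge n_0$ (the possibilities $\mathrm{ecc}(v)=0,1$ are harmless, being absorbed into $d=2$). Lemma \ref{distlem} then bounds $\PP(v\ \hbox{unpebblable})$ by $\bigl(e(2^{d-1}+k-1)/(k(1+\Phi(\alpha)))\bigr)^{k}$, where $k:=\ceil{n^{1/d}-1}$, and I would estimate this uniformly over $2\le d\le d^\ast-1$. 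Three facts suffice: $k\ge n^{1/(d^\ast-1)}-1$, which grows like a fixed positive power of $2^{\sqrt{\log_2 n}}$; $\alpha\ge1$ for $n\ge n_0$, so by Lemma \ref{lemfxn}, $\Phi(\alpha)\ge\alpha/3\ge 2^{\sqrt{2\log_2 n}}e/(6\sqrt{\log_2 n})$; and $2^{d-1}/k$ is nondecreasing in $d$, hence at most $2^{d^\ast-1}/(n^{1/(d^\ast-1)}-1)$. From these, the base is at most $(e/\Phi(\alpha))(1+2^{d-1}/k)$, which a short computation shows is at most a fixed power of $\log_2 n$ times $2^{-\sqrt{\log_2 n}/2}$, and so tends to $0$; raising it to the power $k\to\infty$ makes the bound far below $1/(4n)$ for $n\ge n_0$.

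Combining the two cases, $\PP(\hbox{$H$ unsolvable})\le\sum_v\PP(v\ \hbox{unpebblable})\le n\cdot(1/(4n))=1/4$, so $H$ is solvable with probability at least $3/4$ and its geometric pebbling threshold is below $\alpha n$. The step I expect to be the main obstacle is the uniform estimate of the Lemma \ref{distlem} bound in the small-eccentricity case: that bound is not monotone in $d$, and one must use precisely the fact that $d=O(\sqrt{\log_2 n})$ to guarantee that $k=\ceil{n^{1/d}-1}$ is large enough that raising the (only moderately small) base to the $k$-th power beats $1/(4n)$. The remaining delicate point is bookkeeping: the factors $(1\pm(\log_2 n)^{-1/4})$ must be arranged so that the parameter used here coincides exactly with that of Lemma \ref{lembp2} at $g=n$, so that part (A) applies verbatim.
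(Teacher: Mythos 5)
Your argument is correct and follows essentially the same route as the paper's proof: a union bound over the vertices, applying Lemma \ref{lembp2}(A) with $g=n$ when some vertex lies at distance at least roughly $1.1\sqrt{2\log_2 n}$ from $v$, and Lemma \ref{distlem} otherwise. The only small deviation is that the paper applies Lemma \ref{distlem} with the fixed $d:=\ceil{1.1\sqrt{2\log_2 n}}$ rather than with $d:=\max(2,\mathrm{ecc}(v))$, which sidesteps the uniform-over-$d$ estimate you carry out; but since, as you note, the base and exponent of the Lemma \ref{distlem} bound only improve as $d$ shrinks, the two choices lead to the same worst case.
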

\begin{proof}
Choose $n_0$ such that $n_0\ge G_{+}$.
Let $p:=(1+(2^{\sqrt{2 \log_2 n}}e/(2(1-(\log_2 n)^{-1/4})\sqrt{\log_2 n})))^{-1}$,
and suppose that an independent, geometrically distributed number of pebbles with parameter
$p$ is placed on each vertex of $H$.  Pick some vertex $v$ of $H$.  It will do to show
that $v$ is pebblable with probability at least $1-1/(4n)$.  If there is some vertex $x$
of $H$ with $d(v,x)\ge 1.1 \sqrt{2 \log_2 n}$, then this follows immediately from
Lemma \ref{lembp2}.  Otherwise, 
apply Lemma \ref{distlem} with $d:=\ceil{1.1 \sqrt{2 \log_2 n}}$.  
For an appropriate choice of $n_0$, this will always show that $v$
is unpebblable with probability at most $1/(4n)$.
\end{proof}

\begin{theorem} \label{geolb}  
There is some $n_1\ge 1$ such that, if $H$ is a graph with $n\ge n_1$
vertices, then the geometric pebbling threshold of $H$ is at least $\sqrt{n\log 2}$.
\end{theorem}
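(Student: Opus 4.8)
The plan is to reduce to the complete graph $K_n$ on the vertex set of $H$. Every pebbling move available on $H$ is also available on $K_n$, so every distribution that is solvable on $H$ is solvable on $K_n$; hence for each $x>0$ the probability of solvability, when a geometrically distributed number of pebbles with parameter $(1+x/n)^{-1}$ is placed independently on each of the $n$ vertices, is no larger for $H$ than for $K_n$. By Theorem \ref{thm6} this probability is, for each of the two graphs, strictly increasing in $x$ and equal to $\frac12$ exactly at the geometric pebbling threshold, so the geometric pebbling threshold of $H$ is at least that of $K_n$. It therefore suffices to show that the geometric pebbling threshold of $K_n$ is at least $\sqrt{n\log 2}$ for all sufficiently large $n$, and, again by Theorem \ref{thm6}, for this it is enough to show that at $x=\sqrt{n\log 2}$, i.e.\ for $p:=(1+\sqrt{(\log 2)/n})^{-1}$, the probability that a random distribution on $K_n$ is solvable is at most $\frac12$.

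First I would pin down exactly when a distribution on $K_n$ is solvable. A vertex $t$ with no pebbles is pebblable if and only if some other vertex has at least two pebbles (if so, move two pebbles straight onto $t$; if not, $t$ has no pebbles and every other vertex has at most one, so no pebbling move is possible at all), while any vertex with a pebble on it is trivially pebblable. Hence $K_n$ is solvable exactly when some vertex has at least two pebbles or every vertex has exactly one pebble, and since the pebble counts ${\cal Z}(v)$ are independent with $\PP({\cal Z}(v)=0)=p$ and $\PP({\cal Z}(v)=1)=p(1-p)$,
$$\PP(K_n\hbox{ is solvable})=1-(p(2-p))^n+(p(1-p))^n.$$
So the task becomes to prove $(p(2-p))^n-(p(1-p))^n\ge\frac12$ at $x=\sqrt{n\log 2}$.

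For the main term, I would write $1-p=\sqrt{(\log 2)/n}/(1+\sqrt{(\log 2)/n})$, so $p(2-p)=1-(1-p)^2$ with $(1-p)^2=((\log 2)/n)/(1+\sqrt{(\log 2)/n})^2$, and show this is strictly below $1-2^{-1/n}$ once $n$ is past a small bound, with a gap of order $n^{-3/2}$: indeed $(1-p)^2\le\frac{\log 2}{n}(1-\sqrt{(\log 2)/n})$ by the elementary inequality $1/(1+u)^2\le 1-u$, whereas $1-2^{-1/n}\ge\frac{\log 2}{n}(1-\frac{\log 2}{2n})$ since $1-e^{-t}\ge t-t^2/2$, and $\sqrt{(\log 2)/n}$ exceeds $\frac{\log 2}{2n}$. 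Raising to the $n$th power and applying Bernoulli's inequality then gives $(p(2-p))^n=(1-(1-p)^2)^n\ge\frac12+c/\sqrt n$ for an absolute constant $c>0$ and all large $n$. Meanwhile $p(1-p)<\sqrt{(\log 2)/n}<1$, so $(p(1-p))^n$ decays faster than any power of $n$ and is eventually smaller than $c/\sqrt n$. Combining these gives $(p(2-p))^n-(p(1-p))^n\ge\frac12$ for all $n$ large enough, and taking $n_1$ large enough to justify all the asymptotic steps finishes the proof. The one delicate point is this last estimate: it does not suffice merely to verify $(1-p)^2<1-2^{-1/n}$, one must retain enough of the second-order gap between the two sides to swamp the exponentially small correction $(p(1-p))^n$; the reduction to $K_n$ and the solvability criterion for $K_n$ are routine.
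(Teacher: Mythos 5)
Your proof is correct and is essentially the paper's proof: the paper observes directly that any distribution in which no vertex has two or more pebbles and some vertex has no pebbles is unsolvable on \emph{any} graph (no moves are possible), computes the probability of that event as $q=(1-(1-p)^2)^n-(p(1-p))^n$, and shows $q>\frac12$ at $p=(1+\sqrt{(\log 2)/n})^{-1}$ for large $n$ by an asymptotic expansion. Your detour through $K_n$ lands on the identical event and the identical quantity $(p(2-p))^n-(p(1-p))^n$, and your elementary inequality chain is just a more hands-on way to get the same conclusion that the paper obtains by writing $q=\frac12+(\log 2)^{3/2}n^{-1/2}+O(n^{-1})$.
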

\begin{proof}
A distribution on any graph 
with $n\ge 1$ vertices will not be solvable if no vertex has two or more pebbles
and some vertex has no pebbles.  If the number of pebbles on each vertex is independently geometrically
distributed with parameter $p$, the probability of this event is $q:=(1-(1-p)^2)^n-(p(1-p))^n$.
For large $n$, if $p:=(1+\sqrt{(\log 2)/n})^{-1}=1-\sqrt{(\log 2)/n}+((\log 2)/n)+O(n^{-3/2})$,
then $q=\frac12 + (\log 2)^{3/2} n^{-1/2} + O(n^{-1})$,
which eventually exceeds $\frac 12$.
\end{proof}

\begin{corollary}  If $(H_i)_{i\in\ZZ_{>0}}$ is any sequence of connected graphs such that the
number of vertices in $H_i$ is strictly increasing with $i$, then the sequence has
some pebbling threshold $t(n)$ which is $\Omega(\sqrt{n})$ and 
$O(2^{\sqrt{2 \log_2 n}} n/\sqrt{\log_2 n})$, where $n$ is the number of vertices in a 
graph in the sequence.
\end{corollary}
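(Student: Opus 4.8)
The plan is to read the bound directly off three ingredients already in hand: Theorem~\ref{geolb} (a universal lower bound on the geometric pebbling threshold), Theorem~\ref{geoub} (a universal upper bound for \emph{connected} graphs), and Theorem~\ref{thm10} (which transfers a geometric threshold to the corresponding uniform one). So the present statement is essentially an assembly step; the real content sits in the bouquet-of-paths analysis of \S7--\S8 and in Lemma~\ref{distlem}.

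First I would set up the multiset families. For each $i$ let $M_i\subseteq\omega^{V(H_i)}$ be the set of solvable distributions of $H_i$, and let $n_i:=\#V(H_i)$. One checks the routine facts that $M_i$ is a nonempty upper set with $0\notin M_i$, so that Theorems~\ref{thm5} and~\ref{thm6} apply and both the uniform threshold $(T_i)$ and the geometric threshold $(T'_i)$ of $(M_i)$ are well-defined; comparing the definition of geometric threshold in \S1 with the definition of geometric pebbling threshold in \S8 shows that $T'_i$ is exactly the geometric pebbling threshold of $H_i$. Since the orders are strictly increasing, $n_i\to\infty$.

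Next I would invoke the two universal bounds. Theorem~\ref{geolb} gives $T'_i\ge\sqrt{n_i\log 2}$ for all large $i$, hence $T'_i\to\infty$; and Theorem~\ref{geoub}, using that each $H_i$ is connected, gives $T'_i\le\frac{2^{\sqrt{2\log_2 n_i}}e}{2\sqrt{\log_2 n_i}}(1-(\log_2 n_i)^{-1/4})^{-1}n_i$, which is $O(2^{\sqrt{2\log_2 n_i}}n_i/\sqrt{\log_2 n_i})$. Because $\#V(H_i)=n_i\to\infty$ and $T'_i\to\infty$, Theorem~\ref{thm10} yields $T_i/T'_i\to 1$, so the uniform threshold $(T_i)$ obeys the same two-sided estimate: it is $\Omega(\sqrt{n_i})$ and $O(2^{\sqrt{2\log_2 n_i}}n_i/\sqrt{\log_2 n_i})$.

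Finally I would set $t(n):=T_i$ for $n=n_i$ and argue that $t$ is a pebbling threshold of $(H_i)$ by unwinding the definition. If $(f_i)$ is a sequence of nonnegative integers with $f_i/T_i\to\infty$, then eventually $f_i\ge T_i$, and Proposition~\ref{thm7} (with $U:=T_i$, $T:=f_i$, using $\mu_{T_i}(M_i)\ge\tfrac12$) gives $\mu_{f_i}(M_i)\ge f_i/(f_i+T_i)\to 1$, so a random $f_i$-pebble distribution on $H_i$ is almost always solvable; if $f_i/T_i\to 0$, then eventually $f_i<T_i$, and Proposition~\ref{thm8} gives $\mu_{f_i}(M_i)\le f_i/(T_i+f_i-1)\le f_i/(T_i-1)\to 0$ (here $T_i\to\infty$), so it is almost never solvable. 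I expect no serious obstacle. The one point that needs care is lining up the definition of ``pebbling threshold'' from \cite{czy2002} with the almost-always/almost-never behavior supplied by Propositions~\ref{thm7} and~\ref{thm8}, and noting that $t$ need only be specified on the orders actually occurring in the sequence.
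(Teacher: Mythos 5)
Your proposal is correct and follows essentially the same route as the paper: bound the geometric pebbling threshold above and below via Theorems~\ref{geoub} and~\ref{geolb}, transfer to the uniform threshold via Theorem~\ref{thm10}, and verify the pebbling-threshold property by unwinding definitions (the paper does this last step by citing the argument of \cite[Theorem~1.3]{bek2003}, which is the same content you make explicit with Propositions~\ref{thm7} and~\ref{thm8}). The only cosmetic divergence is that you set $t(n_i)$ equal to the uniform threshold while the paper uses the geometric one and notes the two agree asymptotically; and the paper extends $t$ to all integers by $t(n):=n$ when $n$ is not an order in the sequence, a detail you correctly flag as immaterial.
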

\begin{proof}  By Theorems \ref{geoub} and \ref{geolb}, for sufficiently large $i$,
the geometric pebbling threshold $T_i$ of $H_i$ satisfies
$$\sqrt{n_i \log 2}\le T_i \le
\frac{2^{\sqrt{2\log_2 n_i}} e}{2\sqrt{\log_2 n_i}}(1 - (\log_2 n_i)^{-1/4})^{-1}n_i,$$ where
$n_i$ is the number of vertices in $H_i$.  Define the function $t(n)$ by
$t(n_i):=T_i$ for each $i\in\ZZ_{>0}$ and $t(n):=n$ if $n$ is not equal to any $n_i$.
Now apply Theorem \ref{thm10} and
argue as in the proof of \cite[Theorem 1.3]{bek2003} to prove that $t(n)$ is a pebbling threshold
for $(H_i)_{i\in\ZZ_{>0}}$.
\end{proof}

\begin{theorem} \label{tbouquet3}
There is some constant $K>1$ such that,
if $n\ge 2$ is an integer and 
$$\sqrt{n} \le t \le \frac{2^{\sqrt{2\log_2 n}}}{\sqrt{\log_2 n}}n,$$
then there is some connected graph $H$ with $n$ vertices whose geometric pebbling
threshold is between $t/K$ and $K t$.
\end{theorem}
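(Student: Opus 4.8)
The plan is to realize every target value $t$ in the given range as, up to a bounded factor, the geometric pebbling threshold of a suitable bouquet graph ${\cal B}_{n,g,L}$: Proposition~\ref{tbouquet2} will be used to cover $\sqrt n\le t=O(n)$, and Proposition~\ref{tbouquet1} to cover $\Omega(n)=t\le 2^{\sqrt{2\log_2 n}}n/\sqrt{\log_2 n}$. First I fix $\epsilon:=\tfrac12$, take $L_0:=L_0(\tfrac12)$ from Proposition~\ref{tbouquet2} and $G_0$ from Proposition~\ref{tbouquet1}, and enlarge both constants whenever the argument demands it. I prove the theorem for all $n$ at least some constant $n_2$; for each of the finitely many $2\le n<n_2$ the set of admissible $t$ is a fixed bounded interval of positive reals, and since the $n$-path is a connected graph on $n$ vertices with a fixed positive geometric pebbling threshold (Theorem~\ref{thm6}), enlarging $K$ once more takes care of those finitely many cases.

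For $\sqrt n\le t=O(n)$ I apply Proposition~\ref{tbouquet2} to ${\cal B}_{n,1,L}$. Writing $\phi(L):=n\,\Phi^{-1}(2^{L-1}/n)$, the proposition (whose hypotheses $g=1\ge1$, $2L\le\epsilon n$ and $L_0\le L\le(\log_2 n)-L_0$ are routine for $n$ large) gives geometric pebbling threshold $\phi(L)(1+\eta)$ with $|\eta|\le\tfrac12$. Since $\Phi$ is an increasing bijection and $\Phi(\phi(L+1)/n)=2\,\Phi(\phi(L)/n)$, Lemma~\ref{lemfxn} yields $\phi(L+1)/\phi(L)\in[\sqrt2,2]$; and since $\Phi(\beta)\in[\beta^2/3,\beta^2]$ for $0<\beta\le1$, one gets $\phi(L_0)\in[\sqrt{2^{L_0-1}n},\sqrt{3\cdot 2^{L_0-1}n}]$, so $\phi(L_0)>\sqrt n$, while $\phi(\floor{\log_2 n}-L_0)$ is a fixed constant times $n$. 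Hence, given $\sqrt n\le t\le\phi(\floor{\log_2 n}-L_0)$, I take $L:=L_0$ when $t\le\phi(L_0)$ (then $1\le\phi(L_0)/t\le\sqrt{3\cdot 2^{L_0-1}}$) and otherwise $L:=$ the largest admissible integer with $\phi(L)\le t$ (then $\phi(L)\le t\le 2\phi(L)$, or $t=\phi(L)$); either way $t$ and $\phi(L)$, hence $t$ and the threshold of ${\cal B}_{n,1,L}$, differ by a bounded factor.

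For $\Omega(n)\le t\le 2^{\sqrt{2\log_2 n}}n/\sqrt{\log_2 n}$ I apply Proposition~\ref{tbouquet1}. For $g\ge G_0$ set $L(g):=\ceil{\log_2 n+\sqrt{2\log_2 g}}$ and $h(g):=n\cdot 2^{\sqrt{2\log_2 g}}e/(2\sqrt{\log_2 g})$, and let $g_{\max}$ be the largest $g$ with $2gL(g)\le n$. For every $G_0\le g\le g_{\max}$ the hypotheses of Proposition~\ref{tbouquet1} hold: $2gL(g)\le 2g_{\max}L(g_{\max})\le n$ because $g\mapsto gL(g)$ is nondecreasing, and $\sqrt{2\log_2 g}\le L(g)-\log_2 n<\sqrt{2\log_2 g}+1\le\exp (2\log_2 g)^{1/4}$ once $G_0$ is large; so ${\cal B}_{n,g,L(g)}$ has geometric pebbling threshold within a bounded factor of $h(g)$. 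A one-line estimate of $\tfrac{d}{dg}\log h(g)$ shows $h$ is increasing with $h(g+1)/h(g)\le 2$; and since $g_{\max}=\Theta(n/\log n)$, one has $\sqrt{2\log_2 g_{\max}}=\sqrt{2\log_2 n}-o(1)$, whence $h(g_{\max})\ge(\tfrac e2-o(1))\,2^{\sqrt{2\log_2 n}}n/\sqrt{\log_2 n}$, which for $n$ large exceeds $2^{\sqrt{2\log_2 n}}n/\sqrt{\log_2 n}$. Moreover $h(G_0)=n\cdot 2^{\sqrt{2\log_2 G_0}}e/(2\sqrt{\log_2 G_0})$ can be made $\ge 12n\ge\phi(\floor{\log_2 n}-L_0)$, so the outputs of the two propositions overlap. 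Thus for $\phi(\floor{\log_2 n}-L_0)<t\le 2^{\sqrt{2\log_2 n}}n/\sqrt{\log_2 n}$ I take $g:=G_0$ when $t\le h(G_0)$ and otherwise $g:=$ the largest admissible integer with $h(g)\le t$, obtaining a connected graph on $n$ vertices whose threshold lies within a bounded factor of $t$. Setting $K$ to be the largest of the finitely many bounded factors that arise completes the proof.

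I expect the main obstacle to be the gluing of the two ranges, namely: (i) the overlap inequality $h(G_0)\ge\phi(\floor{\log_2 n}-L_0)$, which reduces to $\Phi^{-1}(2^{-L_0-1})<12\le 2^{\sqrt{2\log_2 G_0}}e/(2\sqrt{\log_2 G_0})$ and so is a matter of choosing $G_0$ large; (ii) the estimate $h(g_{\max})\ge(\tfrac e2-o(1))\,2^{\sqrt{2\log_2 n}}n/\sqrt{\log_2 n}$, which rests on $\sqrt{2\log_2 g_{\max}}=\sqrt{2\log_2 n}-o(1)$ holding because $g_{\max}=\Theta(n/\log n)$; and (iii) the bounded-ratio facts $\phi(L+1)/\phi(L)\in[\sqrt2,2]$ (immediate from Lemma~\ref{lemfxn}) and $h(g+1)/h(g)\le 2$ (from $\tfrac{d}{dg}\log h(g)=\tfrac1{g\sqrt{2\log_2 g}}-\tfrac1{2g(\log 2)\log_2 g}\in(0,\tfrac1g]$ for $g\ge G_0$). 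Everything else is routine verification of the hypotheses of Propositions~\ref{tbouquet1} and~\ref{tbouquet2}.
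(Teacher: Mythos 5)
Your proposal follows essentially the same route as the paper: realize $t$ up to a bounded factor by a bouquet ${\cal B}_{n,g,L}$, using Proposition~\ref{tbouquet2} with $g=1$ and $L$ chosen so that $\Phi^{-1}(2^{L-1}/n)\approx t/n$ for the low range, and Proposition~\ref{tbouquet1} with $L=\ceil{\log_2 n+\sqrt{2\log_2 g}}$ and $g$ chosen so that $2^{\sqrt{2\log_2 g}}e/(2\sqrt{\log_2 g})\approx t/n$ for the high range, handling small $n$ by enlarging $K$. The only cosmetic differences are that the paper writes the low-range choice via $\hat L:=1+\log_2(\Phi(t/n)n)$ rather than your $\phi(L)=n\Phi^{-1}(2^{L-1}/n)$ (these are the same thing), and caps $g$ at $\floor{n/(4\log_2 n)}$ rather than your slightly more permissive $g_{\max}$ with $2gL(g)\le n$; the paper leaves the bounded-ratio verification, which you spell out via $\phi(L+1)/\phi(L)\in[\sqrt2,2]$ and $h(g+1)/h(g)\le 2$, as ``straightforward to verify.''
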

\begin{proof}  
Set $L_0:=L_0(\frac 12)$.
We are free to choose an arbitrary connected graph for $H$
for a finite number of values of $n$, 
at the cost of worsening $K$, so we can assume that $n\ge 2^{2 L_0}$ and $n/(4\log_2 n)\ge G_0$.
Then, we will always choose $H$ to be some ${\cal B}_{n,g,L}$.  Set $\beta:=t/n$
and $\beta_c:=2^{\sqrt{2 \log_2 G_0}}e/(2 \sqrt{\log_2 G_0})$.
\begin{enumerate}
\item If $\beta<\beta_c$, $g$ will always be 1.  Let ${\hat L}:=1+\log_2(\Phi(\beta)n)$.
Then
we let $L$ be $L_0$ if ${\hat L}< L_0$, $\floor{\hat L}$ if $L_0\le {\hat L}\le (\log_2 n)-L_0$,
and $\floor{\log_2 n}-L_0$ if ${\hat L}> (\log_2 n)-L_0$.
\item If $\beta\ge \beta_c$, let $g$ be the maximal integer in $G_0$, $G_0+1$, \dots,
$\floor{n/(4\log_2 n)}$ with $2^{\sqrt{2\log_2 g}}e/(2 \sqrt{\log_2 g})\le \beta$.
Let $L$ be $\ceil{(\log_2 n)+\sqrt{2 \log_2 g}}$.
\end{enumerate}
It is straightforward to verify that, regardless of $t$ or $n$,
the geometric pebbling threshold $t'$ of $H$ can then be computed with Proposition \ref{tbouquet1} or 
Proposition \ref{tbouquet2}, and that there is some absolute constant $K>1$ such that
$t'/t$ is always in $[1/K, K]$.
\end{proof}

\begin{corollary} If $t(n)$ is any positive function of integral $n\ge 1$ which is
$\Omega(\sqrt{n})$ and $O(2^{\sqrt{2\log_2 n}}n/\sqrt{\log_2 n})$, then 
there is some sequence of 
connected graphs $(H_n)_{n\in\ZZ_{>0}}$ with pebbling threshold $t(n)$
such that $H_n$ has $n$ vertices for each $n$.
\end{corollary}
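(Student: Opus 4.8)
The plan is to build each $H_n$ from the bouquet-of-paths construction of Theorem~\ref{tbouquet3}, and then transfer the resulting geometric pebbling thresholds to ordinary pebbling thresholds exactly as in the proof of the preceding corollary.

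First I would record that, since $t$ is positive with $t(n)=\Omega(\sqrt n)$ and $t(n)=O(2^{\sqrt{2\log_2 n}}n/\sqrt{\log_2 n})$, there are constants $c_1,c_2>0$ and an integer $N_0\ge 2$ with $c_1\sqrt n\le t(n)\le c_2 U(n)$ for all $n\ge N_0$, where $U(n):=2^{\sqrt{2\log_2 n}}n/\sqrt{\log_2 n}$.  The value $t(n)$ need not lie in the window $[\sqrt n,U(n)]$ required by Theorem~\ref{tbouquet3}, so I would clamp it: for $n\ge 2$ set $t^*(n):=\min\{\max\{t(n),\sqrt n\},U(n)\}$, which lies in $[\sqrt n,U(n)]$ since $\sqrt n\le U(n)$ for every $n\ge 2$.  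Running through the cases $t(n)<\sqrt n$, $\sqrt n\le t(n)\le U(n)$, and $t(n)>U(n)$ and using the two growth bounds, one obtains $\min\{1,c_2^{-1}\}\,t(n)\le t^*(n)\le\max\{1,c_1^{-1}\}\,t(n)$ for $n\ge N_0$, so $t^*(n)=\Theta(t(n))$.  This clamping---the one place both the $\Omega$ and the $O$ hypotheses are used, and the only spot where the precise window of Theorem~\ref{tbouquet3} matters---is really the only delicate point; everything after it is assembly.

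Next, for each integer $n\ge 2$ I would apply Theorem~\ref{tbouquet3} with $t:=t^*(n)$ to obtain a connected graph $H_n$ on $n$ vertices whose geometric pebbling threshold $T'_n$ satisfies $t^*(n)/K\le T'_n\le K\,t^*(n)$ for the absolute constant $K$, and let $H_1$ be the one-vertex graph.  Then $T'_n=\Theta(t(n))$ for $n\ge N_0$, and in particular $T'_n\to\infty$ since $t(n)=\Omega(\sqrt n)$.  Let $M_n\subseteq\omega^{V(H_n)}$ be the set of solvable distributions on $H_n$; it is a nonempty upper set with $0\notin M_n$, its geometric threshold in the sense of \S1 is precisely $(T'_n)$, and $\#V(H_n)=n\to\infty$.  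Hence Theorem~\ref{thm10} applies and shows that the uniform threshold $(T_n)$ of $(M_n)$ satisfies $T_n/T'_n\to 1$, so $T_n=\Theta(t(n))$ as well.

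Finally I would argue, as in the proof of \cite[Theorem 1.3]{bek2003}, that $T_n$, and hence any function of the same order, in particular $t(n)$, is a pebbling threshold of $(H_n)_{n\in\ZZ_{>0}}$: if $t_n/T_n\to0$ then $t_n<T_n$ eventually and, by Theorem~\ref{thm5} and Proposition~\ref{thm8}, $\mu_{t_n}(M_n)\le t_n/(T_n+t_n-1)\to0$, so a uniformly random distribution of $t_n$ pebbles on $H_n$ is almost never solvable; if $t_n/T_n\to\infty$ then $t_n\ge T_n$ eventually and, by Theorem~\ref{thm5} and Proposition~\ref{thm7}, $\mu_{t_n}(M_n)\ge t_n/(t_n+T_n)\to1$, so it is almost always solvable.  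Since a pebbling threshold depends only on the tail of the sequence, the choices of $H_n$ for $2\le n<N_0$ are immaterial, while $H_n$ has exactly $n$ vertices for every $n$, as required.  I expect the main obstacle to be precisely the clamping step keeping $t^*=\Theta(t)$ inside the admissible window of Theorem~\ref{tbouquet3}; the threshold-transfer part is routine given Theorem~\ref{thm10}, Theorem~\ref{thm5}, and Propositions~\ref{thm7} and~\ref{thm8}.
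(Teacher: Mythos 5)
Your proof is correct and follows essentially the same route as the paper: clamp $t(n)$ into the admissible window to get $t^*(n)=\Theta(t(n))$, apply Theorem~\ref{tbouquet3} to produce $H_n$ on $n$ vertices with geometric pebbling threshold $\Theta(t(n))$, pass from geometric to uniform thresholds via Theorem~\ref{thm10}, and then transfer to an ordinary pebbling threshold via the argument from \cite[Theorem 1.3]{bek2003} (which you helpfully spell out using Propositions~\ref{thm7} and~\ref{thm8}).  The only difference is that you are more explicit than the paper at two points it leaves implicit: the verification that the clamp preserves $\Theta$-order (so that $t$ itself, not just $t^*$, is a pebbling threshold), and the final bek2003-style threshold-transfer step.
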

\begin{proof}  Let $t'(1):=1$ and for all integral $n\ge 2$, let
$$t'(n):=\min(\max(t(n),\sqrt{n}), 2^{\sqrt{2\log_2 n}}n/\sqrt{\log_2 n}).$$
Let $H_1$ be the 1-vertex graph, which has geometric pebbling threshold $t''(1):=1$, 
and, for each $n\ge 2$, let $H_n$ be the 
connected graph
given by Theorem \ref{tbouquet3} which has $n$ vertices and geometric pebbling threshold $t''(n)$ between
$t'(n)/K$ and $K t'(n)$.  Then, apply Theorem \ref{thm10} and \cite[Theorem 1.3]{bek2003}
to prove that $t''(n)$ is a pebbling threshold of $(H_n)_{n\in\ZZ_{>0}}$.
It follows that $t'(n)$ and $t(n)$ are also pebbling thresholds for $(H_n)_{n\in\ZZ_{>0}}$.
\end{proof}

\end{document}